  \newcommand{\calK}{\mathcal{K}}
  \newcommand{\calR}{\mathcal{R}}
  \newtheorem{theorem}{Theorem}[section]
  \newtheorem{proof of the main theorem}[theorem]{Proof of the Main Theorem}
  \newtheorem{proposition}[theorem]{Proposition}
  \newtheorem{corollary}[theorem]{Corollary}
  \newtheorem{lemma}[theorem]{Lemma}
  \newtheorem*{conjecture*}{Conjecture}
  \theoremstyle{definition}
  \newtheorem{definition}[theorem]{Definition}
  \newtheorem*{claim*}{Claim}
  \newtheorem{example}[theorem]{Example}
  \newtheorem*{question 1 *}{Question 1}
  \newtheorem*{question 2 *}{Question 2}
  \newtheorem*{question 3 *}{Question 3}
  \newtheorem*{answer*}{Answer}
  \newtheorem*{application*}{Application}
  \newtheorem*{ideas*}{ideas}
  \theoremstyle{remark}
  \newtheorem{remark}[theorem]{Remark}
  \newtheorem*{remark*}{Remark}
  \newtheorem*{theorem*}{Theorem}
  \newcommand{\Aut}{\ensuremath{\operatorname{Aut}}\xspace}
  \newcommand{\Teich}{{Teichm\"uller }}
  \newcommand{\Ham}{{Hamenst\"adt }}
  \newcommand{\param}{{\mathchoice{\mkern1mu\mbox{\raise2.2pt\hbox{$
  \centerdot$}}
  \mkern1mu}{\mkern1mu\mbox{\raise2.2pt\hbox{$\centerdot$}}\mkern1mu}{
  \mkern1.5mu\centerdot\mkern1.5mu}{\mkern1.5mu\centerdot\mkern1.5mu}}}
\renewcommand{\setminus}{{\smallsetminus}}
   \newcommand{\F}{{\mathbb{F}}}
\newcommand{\FF}{\ensuremath{\mathcal{F}\mathcal{F} } }
\newcommand{\out}{\ensuremath{\mathrm{Out}(\mathbb{F}) } }
\newcommand{\aut}{\ensuremath{\mathrm{Aut}(\mathbb{F}) }}
\renewcommand{\F}{\ensuremath{\mathbb{F} } }
\newtheorem*{hypnas}{\textbf{Theorem}~\ref{hypnas}}
\newtheorem*{suffdiff}{\textbf{Proposition}~\ref{suffdiff}}
\newtheorem*{RH}{\textbf{Theorem}~\ref{RH}}
\newtheorem*{hypffc}{\textbf{Theorem}~\ref{hypffc}}
\newtheorem*{suffdiffhyp}{\textbf{Theorem}~\ref{suffdiffhyp}}
\title[Geometry of free extensions of free groups ]{Geometry of extensions of free groups 
via automorphisms with fixed points on the complex of free factors}
\author{Pritam Ghosh}
\address{Ashoka University\\
  Haryana 131029, India}
\email{pritam.ghosh@ashoka.edu.in}
\author{Funda G\"ultepe}
\address{Department of Mathematics and Statistics\\
 University of Toledo\\
 Toledo, OHIO}
\email{funda.gultepe@utoledo.edu}
\theoremstyle{definition}
\begin{document}

\begin{abstract}
We give conditions of an extension of a free group to be hyperbolic and relatively  hyperbolic using the dynamics of the action of $\out$ on the complex of free factors combined with the weak attraction theory. We work with subgroups of exponentially growing outer automorphisms and instead of using a standard pingpong argument with loxodromics, we allow fixed points for the action and investigate the geometry of the extension group when the fixed points of the automorphisms on the complex of free factors are sufficiently far apart.

  \vspace{0.5cm}

\end{abstract}
\maketitle


\section{Introduction}
\label{intro}

Let $\F$ be a free group of finite rank $\geq 3$. Similar to that of mapping classes for a surface, there is a dichotomy  for elements of  the group  of outer automorphisms $\out$ of the free group $\F$, in terms of their \emph{growth}. An element $\phi\in \out$ is called \emph{exponentially growing} if for some conjugacy class $[w]$ of an element $w\in \mathbb F$,  the word length of $\phi^{i}([w])$ grows exponentially with $i$ for any fixed generating set of $\F$. We will call a subgroup of $\out$ exponentially growing if all of its elements are. For an exponentially growing subgroup $\mathcal{Q}<\out$,  we are interested in understanding the geometry of the extension $E_{Q}$ given by the short exact sequence
\[ 1\to \F \to E_{Q}  \to \mathcal{Q} \to 1\]
that is induced from the sequence 
 \[1\to \F \to \aut \to \out \to 1. \]
 
The extension group $E_{Q}$ is the pullback of $\mathcal{Q}$ to $\aut$, hence  a subgroup of $\aut$. 
 
  When $\mathcal{Q}$ is free group of rank $\geq 2$, which is our main focus for this paper, we say that $ E_{Q}$ is a free-by-free group and it is known that $ E_{Q}\cong \F\rtimes \widehat{\mathcal{Q}}$, where $\widehat{\mathcal{Q}}$ is some (any) lift of $\mathcal{Q}$ to $\aut$. 
In this paper we give necessary and sufficient conditions for $E_Q$ to be  hyperbolic using the dynamical information of the generators of $\mathcal{Q} < \out$  obtained from train track maps of free group automorphisms, as well as via their actions on the complex of free factors $\FF$. 

 An outer automorphism  $\phi$ is \emph{atoroidal} (or hyperbolic) if no power of $\phi$ fixes a nontrivial conjugacy class in $\F$. Atoroidal automorphisms are a special class of exponentially growing elements of $\out$. Using train-track theory, Bestvina-Feighn-Handel \cite{BFH-00} developed dynamical invariants for exponentially growing outer automorphisms of $\F$, called \emph{attracting and repelling laminations}. Namely, associated to each exponentially growing outer automorphism $\phi$ (respectively $\phi^{-1}$), we have finitely many invariant sets of biinfinite \emph{lines} in the (compactified Cayley graph of) Gromov hyperbolic space $\F\cup\partial\F$. These sets of lines are called \emph{attracting laminations} (respectively \emph{repelling laminations}) and denoted by $\mathcal{L}^+(\phi)$ (respectively $\mathcal{L}^-(\phi)$). Atoroidal outer automorphisms are characterized by the property that for every conjugacy class $[w]$, the sequence $\phi^i([w])$ converges to some element of $\mathcal{L}^+(\phi)$ as $i\to\infty$ (Lemma 3.1, \cite{Gh-20}). Ghosh showed that these sets were central to describing the Cannon-Thurston laminations for the extension group $\F\rtimes \langle \phi \rangle$ (Lemma 4.4, \cite{Gh-20}). We use $\mathcal{L}^\pm(\phi)$ to denote the union of the two lamination sets $\mathcal{L}^+(\phi)$ and $\mathcal{L}^-(\phi)$. 

We show that $\mathcal{L}^\pm(\phi)$ is instrumental in characterising hyperbolicity of the extension group $E_Q$, when $Q$ is a free group of rank $\geq 2$.  Our theorem \ref{hypnas} shows that in this case the extension group is hyperbolic, provided we enforce the condition that the attracting and repelling laminations associated to the generators of $Q$ to be pairwise disjoint. 
We also require throughout the paper that our outer automorphisms are \emph{rotationless} (see Section \ref{sec:CT}), a condition that ensures that we will be able  to use special relative train track maps, which can be achieved by taking a uniform power of automorphisms (\cite{FH-11}). 

\begin{theorem}\label{hypnas}
   Let $\phi_1,  \ldots , \phi_k $ be a collection of  exponentially growing outer automorphisms  such that no pair of automorphisms have a common power. 
  Then the following are equivalent:
    \begin{enumerate}
        \item  Each $\phi_i$ is an atoroidal outer automorphism and $\mathcal L^{\pm}(\phi_i)\cap  \mathcal L^{\pm}(\phi_j)=\emptyset$ for all $i\neq j$.
        \item  There exists $M > 0$ such that for all $m_i \geq M$, $Q = \langle \phi_1^{m_1},\cdots,\phi_k^{m_k} \rangle$ is a free group and the extension group $\F\rtimes \widehat{Q}$ is hyperbolic (where $\widehat{Q}$ is any lift of $Q$).
    \end{enumerate}
\end{theorem}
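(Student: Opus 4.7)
My plan is to treat the two implications separately. The reverse direction (2) $\Rightarrow$ (1) should follow from standard obstructions to hyperbolicity. If some $\phi_i$ fails to be atoroidal, then a power of $\phi_i$ fixes an $\F$-conjugacy class $[w]$; in $E_Q$, the element $w\in\F$ commutes with a lift of that power, producing a $\ZZ^2$ subgroup which cannot embed in the word-hyperbolic group $E_Q$. For the disjointness clause, I would argue by contradiction: if $\mathcal{L}^\pm(\phi_i)\cap \mathcal{L}^\pm(\phi_j)\neq \emptyset$, then since the $\phi_i,\phi_j$ are rotationless with no common power, the shared lamination carries a common reducing invariant free factor system whose vertex stabilizers in $Q$ acting on $\F$ produce polynomially growing conjugacy classes, contradicting atoroidality of some element of $Q$ and hence the hyperbolicity of $E_Q$.

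The main content lies in (1) $\Rightarrow$ (2), which I would establish in two stages. First, to produce $M$ so that $Q = \langle \phi_1^{m_1},\ldots,\phi_k^{m_k}\rangle$ is free of rank $k$, I would run a Tits-style ping-pong argument on the space of lines $\widetilde{\mathcal{B}}$ (or on geodesic currents). The weak attraction theory of Bestvina-Feighn-Handel says that for each EG rotationless $\phi_i$, any line not lying in the non-attracted set of $\phi_i$ (roughly, lines carried by $\phi_i$-invariant free factor systems together with leaves of $\mathcal{L}^-(\phi_i)$) is drawn into any prescribed neighborhood of $\mathcal{L}^+(\phi_i)$ by $\phi_i^n$ for sufficiently large $n$. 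The disjointness hypothesis $\mathcal{L}^\pm(\phi_i)\cap\mathcal{L}^\pm(\phi_j) = \emptyset$ forces the attracting/repelling laminations of $\phi_j$ to lie in the attraction basin of $\phi_i^{\pm 1}$, which yields both freeness and the requisite dynamical behaviour.

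The second and principal stage is hyperbolicity of $E_Q$. Here I would invoke the Bestvina-Feighn combination theorem, whose hypotheses reduce to (a) no nontrivial $w\in Q$ fixes any $\F$-conjugacy class, and (b) $Q$-orbits of conjugacy classes satisfy uniform exponential flaring. Both conditions should be extracted from the same iterated weak attraction mechanism: for a cyclically reduced word $w = \phi_{i_s}^{a_s}\cdots \phi_{i_1}^{a_1}\in Q$ acting on a conjugacy class $[u]$, successive syllables shuttle $[u]$ through prescribed neighborhoods of $\mathcal{L}^+(\phi_{i_r})$ (the disjointness guaranteeing that at each step the input is away from the non-attracted set of the next generator), contributing a definite multiplicative stretch at each syllable and preventing back-cancellation. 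Iterating to $w^n$ yields exponential growth of $w^n([u])$, settling (a), and the same trajectory argument, sharpened to two-sided control of the syllables, yields flaring and hence (b).

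The main obstacle I anticipate is purely quantitative: the threshold $M$ must be chosen uniformly so that the ping-pong, the no-fixed-conjugacy-class argument, and the flaring condition all hold simultaneously for arbitrary words in $Q$. For a single syllable $\phi_i^a$ one needs $a$ large enough to push the complement of a small neighborhood $U_i^-$ of $\mathcal{L}^-(\phi_i)$ deep into a small neighborhood $U_i^+$ of $\mathcal{L}^+(\phi_i)$; but for a concatenation, the output of one syllable, lying in $U_{i_{r-1}}^+$, must be well inside the attraction basin of the next, which requires quantitative disjointness of $U_{i_{r-1}}^+$ from the non-attracted set of $\phi_{i_r}$. The key technical lemma to isolate is therefore a uniform separation statement for the lamination neighborhoods, measured via BFH stretch factors or currents, which then feeds directly into the combination theorem verification and delivers hyperbolicity of $E_Q = \F\rtimes \widehat Q$.
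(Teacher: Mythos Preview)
Your $(1)\Rightarrow(2)$ outline is a valid direct route and is essentially what underlies the paper's cited Theorem~\ref{main1} from \cite{GGrelhyp}; the paper simply invokes that black box with $\mathcal{K}=\emptyset$ rather than rerunning the ping-pong and Bestvina--Feighn combination argument. The one substantive step the paper isolates, and which you only gesture at as a ``uniform separation statement,'' is the upgrade from $\mathcal{L}^\pm(\phi_i)\cap\mathcal{L}^\pm(\phi_j)=\emptyset$ to \emph{no asymptotic generic leaves}: if generic leaves $\ell_1\in\Lambda_1^+$ and $\ell_2\in\Lambda_2^+$ share an end, birecurrence and the openness of weak attraction to $\Lambda_1^-$ force $\ell_2$ to be generic for $\Lambda_1^+$, whence $\Lambda_1^+=\Lambda_2^+$. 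Without this, mere disjointness of the lamination \emph{sets} does not give you disjoint attracting/repelling neighborhoods, and your ping-pong cannot start.

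Your $(2)\Rightarrow(1)$ argument for the disjointness clause has a genuine gap. Suppose $\Lambda\in\mathcal{L}^\pm(\phi_i)\cap\mathcal{L}^\pm(\phi_j)$. You claim this yields a common invariant free factor whose ``vertex stabilizers in $Q$ acting on $\F$ produce polynomially growing conjugacy classes.'' This does not follow: if $\Lambda$ fills there is no proper invariant free factor at all, and even when $\mathcal{F}_{supp}(\Lambda)$ is proper, both $\phi_i$ and $\phi_j$ (and all words in them) can remain atoroidal on that free factor. A shared invariant free factor is simply not an obstruction to hyperbolicity of $E_Q$. The paper's argument is entirely different: hyperbolicity of $E_Q$ gives a Cannon--Thurston map $\partial\F\to\partial E_Q$, and since $\phi_i,\phi_j$ have no common power the boundary points $\phi_i^{\pm\infty},\phi_j^{\pm\infty}\in\partial E_Q$ are distinct, so by Mitra's result \cite{Mj-97} their ending lamination sets are disjoint; but by \cite{Gh-20} the generic leaves of every element of $\mathcal{L}^+(\phi_i)$ lie in the ending lamination set of $\phi_i^\infty$, forcing $\mathcal{L}^\pm(\phi_i)\cap\mathcal{L}^\pm(\phi_j)=\emptyset$. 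You would need either this Cannon--Thurston machinery or a careful stretch-factor argument on $\mathrm{Stab}(\Lambda)$ (which is not what you wrote) to close this direction.
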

 Theorem \ref{hypnas} is comparable to Theorem 1.3  of Farb and Mosher (\cite{FaM-02}) and the main Theorem of \cite{Ham05} where hyperbolicity of the extension group $E_{Q}=\pi_1(S) \rtimes Q $ is characterized by the \emph{convex cocompactness} of a free subgroup $Q$ of  the mapping class group $MCG(S)$ of a closed surface $S$. 
 
We say that  $\phi_i, \phi_j$, $i\neq j$ are \emph{independent} if generic leaves of  elements of $\mathcal{L}^\pm(\phi_i)$ and  $\mathcal{L}^\pm(\phi_j)$ are not asymptotic to each other. Hence, Theorem \ref{hypnas} can be written as; 

\begin{theorem*}
   Let $\phi_1,  \ldots , \phi_k $ be a collection of  exponentially growing outer automorphisms  such that no pair of automorphisms have a common power. 
  Then, 
    $\phi_1,  \ldots , \phi_k $ is a purely atoroidal (all infinite order elements are atoroidal), pairwise independent 
    collection if and only if 
        the extension group $\F\rtimes \widehat{Q}$ for a free group $Q = \langle \phi_1^{m_1},\cdots,\phi_k^{m_k} \rangle$ generated by high enough $m_i$'s is hyperbolic. 
   
\end{theorem*}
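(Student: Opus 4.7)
I would prove the two implications separately, with $(1) \Rightarrow (2)$ being the substantial direction.

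For $(2) \Rightarrow (1)$: hyperbolicity of $\F\rtimes\widehat Q$ precludes any $\ZZ\oplus\ZZ$ subgroup, so if some $\phi_i$ failed to be atoroidal, a power $\phi_i^k$ would fix a conjugacy class $[w]$, and then $w$ together with any lift of $\phi_i^k$ in $\widehat Q$ would generate $\ZZ^2$ inside the extension; thus each $\phi_i$ must be atoroidal. For the disjointness $\mathcal L^\pm(\phi_i)\cap\mathcal L^\pm(\phi_j)=\emptyset$, I would argue contrapositively: a shared generic leaf, combined with the ``no common power'' hypothesis and the stabiliser theory of attracting laminations of Bestvina--Feighn--Handel, would let one produce an element of $Q$ fixing a conjugacy class, again yielding a $\ZZ^2$ in $E_Q$ and contradicting hyperbolicity.

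For $(1) \Rightarrow (2)$: I would proceed in three steps. Step 1 (freeness): choose pairwise disjoint open neighbourhoods $U_i^\pm$ of $\mathcal L^\pm(\phi_i)$ in the space of lines and apply the Weak Attraction Theorem of Handel--Mosher to select $M$ so that each $\phi_i^{m_i}$ sends the complement of $U_i^-$ into $U_i^+$; a standard ping-pong then yields freeness of $Q=\langle\phi_1^{m_1},\ldots,\phi_k^{m_k}\rangle$. Step 2 (purely atoroidal): for any nontrivial reduced word $g\in Q$, the ping-pong configuration forces iterates of $g$ on any conjugacy class $[w]$ eventually to enter some $U_i^+$, and train-track expansion in that direction makes the word length grow exponentially, so $g$ fixes no conjugacy class. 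Step 3 (hyperbolicity): verify the flaring condition of Bestvina--Feighn's combination theorem for $1\to\F\to E_Q\to Q\to 1$, using exponential expansion of each $\phi_i^{m_i}$ in the $U_i^+$ direction together with disjointness of the $U_j^\pm$ to rule out cancellation across syllables of a $Q$-geodesic; alternatively, one may verify quasi-convexity of a $Q$-orbit in a suitable subspace of $\FF$ and invoke a Mj--Reeves or Dowdall--Taylor style extension criterion.

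The principal obstacle is Step 3 in the non-fully-irreducible case, when some $\phi_i$ fixes a proper free factor and hence does not act loxodromically on $\FF$. In that regime the standard convex cocompactness approach is unavailable, and one must exploit the hypothesis (emphasised in the abstract) that the fixed points of the $\phi_i$ on $\FF$ are sufficiently far apart. I would expect the technical heart to consist of projecting $Q$-orbits to the supporting free factor systems of the $\mathcal L^\pm(\phi_i)$ and combining subsurface-projection–style estimates in $\FF$ with the weak attraction theorem to show that $Q$-orbits in $\FF$ follow a thin quasiconvex pattern, which in turn controls the flaring constant and closes Step 3.
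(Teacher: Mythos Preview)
Your plan diverges from the paper's proof in both directions, and in one place there is a real gap.

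\textbf{Direction $(2)\Rightarrow(1)$.} Your atoroidal argument via $\ZZ\oplus\ZZ$ is fine. The disjointness argument is where the problem lies. You propose that a shared lamination plus ``no common power'' plus BFH stabiliser theory would produce an element of $Q$ with a periodic conjugacy class. But $\mathrm{Stab}(\Lambda^+)$ is known to be virtually cyclic only when $\Lambda^+$ fills (\cite[Theorem 2.14]{BFH-97}); for a reducible atoroidal $\phi_i$ whose shared lamination has proper support, the stabiliser can be large, and it is not at all clear how you extract a periodic conjugacy class from membership of both $\phi_i,\phi_j$ in it. The paper uses completely different technology: the Cannon--Thurston map $\partial\F\to\partial(\F\rtimes\widehat Q)$ exists by hyperbolicity; by Mitra's result the ending lamination sets at distinct points of $\partial(\F\rtimes\widehat Q)$ are disjoint; generic leaves of $\mathcal L^+(\phi_i)$ lie in the ending lamination set at $\phi_i^\infty$ by \cite{Gh-20}; and $\phi_i^\infty\ne\phi_j^\infty$ since $\phi_i,\phi_j$ have no common power. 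This yields $\mathcal L^\pm(\phi_i)\cap\mathcal L^\pm(\phi_j)=\emptyset$ directly.

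\textbf{Direction $(1)\Rightarrow(2)$.} Your Steps 1--2 are in the right spirit. For Step~3 the paper does \emph{not} verify Bestvina--Feighn flaring from scratch and does \emph{not} use any $\FF$--projection machinery. It invokes Theorem~\ref{main1} (the authors' earlier combination theorem from \cite{GGrelhyp}) with the admissible subgroup system $\mathcal K=\emptyset$: atoroidality forces the nonattracting sinks to be empty, and the only remaining hypothesis to check is that generic leaves of $\mathcal L^\pm(\phi_i)$ and $\mathcal L^\pm(\phi_j)$ are not asymptotic. That is a short weak-attraction argument: a common end between generic leaves $\ell_1\in\Lambda_1^+$ and $\ell_2\in\Lambda_2^+$ would prevent $\ell_2$ from being weakly attracted to the dual $\Lambda_1^-$, and then \cite[Proposition 6.0.8]{BFH-00} forces $\ell_2$ to be generic for $\Lambda_1^+$, whence $\Lambda_1^+=\overline{\ell_2}=\Lambda_2^+$, contradicting the disjointness hypothesis. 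Your diagnosis of the ``principal obstacle'' is therefore miscalibrated: this theorem carries no hypothesis that fixed points in $\FF$ are far apart --- that assumption belongs to the separate ``sufficiently different'' results (Proposition~\ref{suffdiff}, Theorems~\ref{suffdiffhyp} and~\ref{RH}) later in the paper --- and subsurface-projection estimates play no role here.
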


The interest in the geometry of extensions of groups started with Thurston (\cite{Tharxiv}) where the $\mathbb Z$--extension of $\pi_1(S)$ of a closed surface $S$ is hyperbolic if and only if the cyclic group is generated by a pseudo-Anosov element of the mapping class group. 
 Bestvina-Feighn \cite{BF-92} and Brinkmann \cite{Br-00} have shown that when $Q$ is an infinite cyclic subgroup of $\out$, $E_Q$ is hyperbolic if and only if $\phi$ is atoroidal. 

 We remark that we do not need $Q$ to be free to prove that the collection is independent and purely atoroidal.

\subsection{Fixed points on $\FF$ and hyperbolicity of the extension}

Free factor complex $\FF$ of the free group $\F$ is a simplicial complex whose vertices are conjugacy classes of non-trivial free factors of $\F$. Two vertices corresponding to free factors  are connected by an edge if one includes the other as subgroup, up to conjugation. $\FF$ is  Gromov-hyperbolic (\cite{H3,BF-12}) and in many other ways shows resemblance to the curve complex. Hence the action of $\out$ on $\FF$ is reminiscent of the action of the mapping class group on the curve complex. 

An element of $\out$ is \emph{fully irreducible} if and only if it has no periodic orbits on $\FF$. It is well known that fully irreducible elements of $\out$ act loxodromically on $\FF$ (\cite{BF-12}) and this was exploited by Dowdall-Taylor \cite{DowTay1} to produce hyperbolic extensions of free groups. Their theorem can be compared to those of  Kent--Leininger (\cite{KL2008}), and \Ham  (\cite{Ham05}) characterizing convex cocompactness, hence the hyperbolicity of the extension, with quasi isometric embedding of the group in the curve complex . As such the automorphisms of \cite{DowTay1} are fully irreducible.  

We take a different approach in this paper and  produce hyperbolic extensions of groups whose elements are not necessarily fully irreducible, hence we allow automorphisms to have fixed points on $\FF$. The following result shows that the only condition one needs for hyperbolicity is that they do not have a common fixed point on $\FF$.  
\begin{theorem}\label{hypffc}
     Let $\phi_1,  \ldots , \phi_k $ be a collection of atoroidal elements which do not have a common power. If no pair $\phi_i, \phi_j$, $i\neq j$ fixes a common vertex in the free factor complex $\FF$, then
     \begin{enumerate}
         \item There exists $M > 0$ such that for all $m_i \geq M$, $Q=\langle \phi_1^{m_1},\cdots,\phi_k^{m_k} \rangle$ is a free group, 
         \item Extension group $\F\rtimes \widehat{Q}$ is a hyperbolic group (where $\widehat{Q}$ is any lift of $Q$).
     \end{enumerate}
\end{theorem}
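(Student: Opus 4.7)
The plan is to derive Theorem~\ref{hypffc} from Theorem~\ref{hypnas} by upgrading the $\FF$--fixed point hypothesis to the lamination independence hypothesis of Theorem~\ref{hypnas}. After replacing each $\phi_i$ by a common rotationless power --- which preserves atoroidality and the no--common--power assumption, and converts the fixed--vertex hypothesis into a statement about common periodic vertices of the original $\phi_i$ --- it suffices to show $\mathcal L^{\pm}(\phi_i)\cap \mathcal L^{\pm}(\phi_j)=\emptyset$ for all $i\ne j$, at which point Theorem~\ref{hypnas} applies verbatim.

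The key tool is the free factor support $\mathcal A(\Lambda)$ of an attracting or repelling lamination $\Lambda$ of a rotationless exponentially growing $\phi$ (Bestvina--Feighn--Handel): this is the smallest free factor system carrying $\Lambda$, it is canonically associated to $\Lambda$, and it is therefore $\phi$--invariant. Assume for contradiction that some $\Lambda\in \mathcal L^{\pm}(\phi_i)\cap \mathcal L^{\pm}(\phi_j)$; then $\mathcal A(\Lambda)$ is invariant under both $\phi_i$ and $\phi_j$.

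Two cases arise. If $\mathcal A(\Lambda)$ is a \emph{proper} free factor system, then after a further bounded power that trivializes the permutation action on the finitely many components of $\mathcal A(\Lambda)$, each component is a conjugacy class of proper free factor, hence a vertex of $\FF$ fixed by both $\phi_i$ and $\phi_j$; this directly contradicts the hypothesis. If instead $\mathcal A(\Lambda)=\{[\F]\}$, so $\Lambda$ is filling and arational, then both $\phi_i$ and $\phi_j$ preserve $\Lambda$. Invoking the Bestvina--Reynolds/Hamenst\"adt correspondence between filling arational laminations and $\partial\FF$, combined with the loxodromic action on $\FF$ of an atoroidal automorphism carrying a filling lamination, the two elements $\phi_i$ and $\phi_j$ share a common boundary fixed point on $\partial\FF$; the virtual cyclicity of the stabilizer of such a lamination in $\out$ then forces $\phi_i$ and $\phi_j$ to share a common power, again contradicting the hypothesis.

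With pairwise independence of $\{\phi_1,\ldots,\phi_k\}$ thus established, Theorem~\ref{hypnas} furnishes the threshold $M>0$ such that $Q=\langle\phi_1^{m_1},\ldots,\phi_k^{m_k}\rangle$ is free and $\F\rtimes\widehat Q$ is hyperbolic whenever each $m_i\ge M$. The main obstacle I anticipate is the filling case: one must argue precisely that a common filling arational attracting lamination forces a common power, which routes through the structure of stabilizers of filling laminations (or their corresponding boundary points in $\partial\FF$) inside $\out$, requiring either a WPD--type argument or a direct train track verification. The rotationless--power bookkeeping --- translating between literal fixed vertices of the $\phi_i$ and periodic vertices of their rotationless powers --- is a secondary but necessary technicality.
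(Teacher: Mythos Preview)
Your overall architecture matches the paper's: reduce to Theorem~\ref{hypnas} by showing $\mathcal L^{\pm}(\phi_i)\cap\mathcal L^{\pm}(\phi_j)=\emptyset$, and split into the proper versus filling support cases. The proper case is handled exactly as in the paper (note incidentally that $\mathcal F_{supp}(\Lambda)$ of a single attracting lamination is always a \emph{single} free factor, so your permutation-of-components worry is unnecessary).

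The gap is in the filling case. Your argument hinges on the assertion that ``an atoroidal automorphism carrying a filling attracting lamination acts loxodromically on $\FF$'', but this is false: loxodromic on $\FF$ is equivalent to fully irreducible, and there exist atoroidal \emph{reducible} outer automorphisms possessing a filling attracting lamination (coming from a topmost EG stratum) while simultaneously fixing a proper free factor arising from a lower stratum. For such $\phi$ the action on $\FF$ is elliptic, there is no attracting boundary point to speak of, and the Bestvina--Reynolds/Hamenst\"adt correspondence with $\partial\FF$ does not apply in the way you suggest. Likewise, virtual cyclicity of $\mathrm{Stab}(\Lambda)$ is only known \emph{once one already has a fully irreducible element inside the stabilizer} (this is \cite[Theorem~2.14]{BFH-97}); it is not automatic for a filling $\Lambda$.

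The paper closes exactly this gap with Proposition~\ref{fillinglam}: under the hypothesis that $\phi_i,\phi_j$ share no invariant proper free factor (which is precisely what ``no common fixed vertex in $\FF$'' gives), one shows that the subgroup $\langle\phi_i,\phi_j\rangle<\mathrm{Stab}(\Lambda)$ is itself fully irreducible as a subgroup, then invokes Handel--Mosher's Theorem~C$'$ to produce a genuinely fully irreducible element $\xi\in\langle\phi_i,\phi_j\rangle$. A weak-attraction argument then forces $\Lambda=\Lambda^\pm_\xi$, at which point \cite[Theorem~2.14]{BFH-97} yields virtual cyclicity and hence a common power. So your anticipated obstacle is real, and the route through it is a train-track/weak-attraction argument rather than a direct $\partial\FF$ argument.
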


Our Theorem \ref{hypffc} hence responds to the question of what happens when a pair $\phi, \psi$ in a collection of automorphisms do have common fixed points for their action on $\FF$. 

\subsection {Sufficiently different automorphisms, non-attracting sink and the geometry of the extension group}
One of the motivations for this work is to establish a connection between the dynamics of the action of $\out$ on the free factor complex and the dynamical data that we have from the train-track theory. We believe that this is a theme that has barely been explored and a lot can be learned if we are able exploit this connection. 

We call a collection of automorphisms  \emph{sufficiently different} if no pair of automorphisms have a common power  and for each pair of outer automorphisms in the collection, the distance between the fixed vertices  in $\FF$ is at least $2$ (if such vertices exist) (see section \ref{sec:suffdiff}). Lemma \ref{pfifbf} gives a simple criterion for constructing free subgroups of $\out$ using sufficiently different elements.

Given a short exact sequence $1 \to \F \to E \to Q \to 1$ of finitely generated groups, we say that $E$ has a \textbf{\emph{cusp-preserving}} relatively hyperbolic structure if there exists a collection of finitely generated subgroups $\{H_i\}$ of $\F$ such that $Q$ preserves conjugacy class of each $H_i$ and $E$ is (strongly) hyperbolic relative to the collection $\{N_E(H_i)\}$ of normalizers of $H_i$ in $E$. Our main aim in Section \ref{sec:cusprelhyp} is to address the question of when a free subgroup $Q$ of $\out$ can yield relatively hyperbolic extension $E$ with the cusp-preserving property. 

In the proposition below we conclude that being sufficiently different is  an obstruction to cusp-preserving relative hyperbolicity of the extension group, except in one particular case.  Equivalently, it is an obstruction to having an \emph{admissible subgroup system} (\cite{GGrelhyp}, Section \ref{sec:K}) for the collection of $\phi_i$'s. Admissible subgroup system is a malnormal collection of subgroups with some special properties,  and the \emph{nonattracting sink} $\mathcal{K}^*_\phi$ of an automorphism $\phi\in\out$, which is developed in \cite{GGrelhyp} as an example of an admissible subgroup system, carries all conjugacy classes which do not grow exponentially under iteration by $\phi$ (see Lemma \ref{sinkprop}). Nonattracting sink can be computed explicitly using relative train-track maps.

For a collection $\phi_1, \cdots, \phi_k$ of pairwise sufficiently different exponentially growing outer automorphisms, let $\mathcal{K}^*_i$ be the nonattracting sink of $\phi_i$.

\begin{suffdiff}\emph{Let $\phi_1, \cdots, \phi_k $ be a collection of  exponentially growing, sufficiently different  outer automorphisms. If all the nonattracting sinks are nonempty then exactly one of following conditions are true:
    \begin{enumerate}
        \item $ \phi_1, \cdots, \phi_k $ is a collection of fully irreducible geometric outer automorphisms induced by some pseudo-Anosov homemorphisms of the same compact surface with one boundary component. 
        Moreover, all sufficiently high powers of $\phi_i$' s generate a free group $Q$ such that $E_Q$ has a cusp-preserving relatively hyperbolic structure.
       \item $\mathcal{K}^*_i \neq \mathcal{K}^*_j$ for some $i\neq j$ and the extension $E_Q$ of a free group $Q$, which is generated by sufficiently high powers of $\phi_i$'s, cannot have a cusp preserving relatively hyperbolic structure. 
    \end{enumerate}}
\end{suffdiff}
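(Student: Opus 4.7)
The plan is to establish the dichotomy by analyzing what the nonemptiness of nonattracting subgroup systems forces on each $\phi_i$, then separating into the geometric-same-surface case, where a cusp structure can be built from the boundary conjugacy class, and its complement, where any candidate cusp subgroup is obstructed by the sufficiently-different hypothesis.

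First I would unpack the hypothesis. For a rotationless exponentially growing $\phi_i$ with an attracting lamination $\Lambda^+_i$, the nonattracting subgroup system $\calA_{na}(\Lambda^+_i)$ is nonempty only if $\phi_i$ preserves (up to conjugacy) a nontrivial subgroup carrying no leaf of $\Lambda^+_i$. By Handel--Mosher's structure theorem for such invariant systems, this forces either (a) $\phi_i$ to be reducible, in which case it fixes the conjugacy class of a proper free factor, giving a fixed vertex in $\FF$; or (b) $\phi_i$ to fix a nontrivial conjugacy class in $\F$. In the fully irreducible subcase of (b), the classification of Bestvina--Handel identifies $\phi_i$ with a pseudo-Anosov homeomorphism of a compact surface $S_i$ with one boundary component (so $\F \cong \pi_1(S_i)$), with the boundary giving the fixed conjugacy class.

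For Case (1), suppose all $\phi_i$ are fully irreducible geometric. I would show that the $S_i$ must all equal a common surface $S$: the conjugacy class $c_i \in \F$ of the boundary of $S_i$ is a canonical invariant, and two pseudo-Anosovs of different surfaces embedded into $\out$ would produce distinct fixed cyclic-subgroup systems, incompatible under the free subgroup $\widehat Q$. Once the surfaces coincide, $\widehat Q$ sits in $\MCG(S)$ as a free purely pseudo-Anosov group (for sufficiently high $m_i$, by a standard ping-pong on $\PMF(S)$ using that the generators have pairwise distinct fixed points on the curve complex, which is a consequence of the sufficient-difference hypothesis translated via the Kapovich--Lustig correspondence). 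Farb--Mosher convex cocompactness then gives hyperbolicity of $\pi_1(S) \rtimes \widehat Q$ relative to the boundary cusp $\la c\ra \rtimes \widehat Q_c$, where $\widehat Q_c$ is the stabilizer of $c$; this is the required cusp-preserving relatively hyperbolic structure.

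For Case (2), I would argue by contradiction: assume $E_Q$ has a cusp-preserving relatively hyperbolic structure with parabolic collection $\mathcal P$. Cusp-preservation forces each $P \in \mathcal P$ to intersect $\F$ in a nontrivial subgroup $H_P$ whose conjugacy class is preserved by $\widehat Q$, hence by every $\phi_i$. If $H_P$ is contained in a proper free factor, its free factor support is a vertex of $\FF$ fixed by all $\phi_i$, and by the sufficiently different hypothesis these common fixed vertices must be at pairwise distance $\geq 2$, which is incompatible with being a single shared vertex. Hence each $H_P$ must be cyclic and primitive enough to define a common periodic conjugacy class for all $\phi_i$. Combined with the nonemptiness of the nonattracting sinks and the previous step, this forces each $\phi_i$ to be fully irreducible geometric and to share the boundary conjugacy class, landing us in Case (1).

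The main obstacle is the Case (2) analysis, specifically ruling out cusp candidates when some $\phi_i$ is reducible. One has to rule out that a reducible $\phi_i$ with an invariant free factor $A$ could contribute $A$ as a common invariant subgroup with the other $\phi_j$: this is exactly where the distance $\geq 2$ condition in $\FF$ between the fixed-vertex sets must be used to prevent a shared cusp candidate. The key ingredient will be a careful application of Bestvina--Feighn--Handel's theory of free factor supports for invariant subgroup systems, together with the Handel--Mosher combination principle, to show that a common $\widehat Q$-invariant conjugacy class of a subgroup in $\F$ forces the fixed vertices in $\FF$ to coincide or be adjacent, contradicting sufficient difference except in the geometric-same-surface case.
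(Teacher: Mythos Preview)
Your overall architecture (prove (1) gives a cusp structure; assume a cusp-preserving structure exists and derive (1)) matches the paper's, but the execution has two genuine gaps.

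\textbf{Case (1): wrong citation.} Farb--Mosher convex cocompactness gives \emph{hyperbolic} surface-group extensions for closed surfaces; it does not produce a relatively hyperbolic structure with a boundary cusp. For a once-punctured surface one needs the combination theorem of Mj--Reeves (building on Bowditch), which is what the paper invokes. Also, you do not need to prove separately that the surfaces $S_i$ coincide in this case: in the paper's argument, ``same surface'' is an \emph{output} of the contradiction analysis in Case (2), not an additional hypothesis to verify.

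\textbf{Case (2): the cyclic reduction is unjustified.} Your key step is: if $H_P$ is contained in a proper free factor you get a common fixed vertex in $\FF$, contradiction; ``hence each $H_P$ must be cyclic.'' This inference is invalid --- a subgroup not contained in any proper free factor need not be cyclic. The paper does not argue directly with the peripheral subgroups $H_P$; instead it passes through the equivalence (their Theorem~\ref{main1}) between a cusp-preserving structure on $E_Q$ and the existence of a common \emph{admissible subgroup system}, and then works with the nonattracting sink $\mathcal{K}^*_i$. The substantive content you are missing is Lemma~\ref{rhconditions}: using the CT--map structure of Nielsen paths, one shows that under the sufficiently-different hypothesis the sink $\mathcal{K}^*_i$ has no free-factor components and hence consists \emph{only} of conjugacy classes $[\langle g_\ell\rangle]$ arising as the unique closed indivisible Nielsen path of some geometric EG stratum. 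That is a nontrivial train-track argument (ruling out linear edges, superlinear NEG edges, and concatenations of fixed edges), and it is what forces the cyclic-from-geometric conclusion --- not a free-factor-support dichotomy on $H_P$.

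Once that is in place, the paper's endgame is: the cyclic sink element $[g_1]$ is carried by $\mathcal{A}_{na}(\Gamma^+_j)$ but (by Lemma~\ref{rhconditions}(c)) not by any of its free-factor components, so $\Gamma^+_j$ is geometric with the \emph{same} Nielsen class $[g_1]$; if $\mathcal{F}_{supp}([g_1])$ were proper, both $\phi_i,\phi_j$ would fix it, violating sufficient difference; hence $[g_1]$ fills, and the Handel--Mosher geometric model identifies both $\phi_i,\phi_j$ as pseudo-Anosovs on the same one-boundary surface. Your sketch gestures at this conclusion but does not supply the mechanism that gets you there.
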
 

 We have the following theorem characterizing the hyperbolicity of $E_Q$ using sinks:

\begin{theorem}\label{suffdiffhyp}
      Let $\phi_1, \ldots , \phi_k$ be a  collection of exponentially growing and sufficiently different  outer automorphisms.
      Let $Q$ be a free group  generated by sufficiently high powers of $\phi_i$'s.     
      Then, $\F\rtimes \widehat{Q}$ is a hyperbolic group if and only if $\mathcal{K}^*_i = \emptyset$ for all $i\in \{1,\cdots, k \}$.
\end{theorem}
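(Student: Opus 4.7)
The plan is to prove each direction separately, in both cases exploiting the characterization of the nonattracting sink as the receptacle of conjugacy classes that fail to grow exponentially, and then reducing the harder direction to Theorem~\ref{hypffc}.

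\textbf{The easy ($\Leftarrow$) direction.} Assume each $\mathcal{K}^*_i=\emptyset$. By Lemma~\ref{sinkprop}, every nontrivial conjugacy class in $\F$ grows exponentially under iteration by $\phi_i$, so each $\phi_i$ is atoroidal. The sufficiently-different hypothesis says that for each $i\neq j$, the fixed vertices of $\phi_i$ and $\phi_j$ on $\FF$ (when these exist) lie at distance at least $2$; in particular no pair $\phi_i,\phi_j$ shares a common fixed vertex of $\FF$. The hypotheses of Theorem~\ref{hypffc} are therefore met, and its conclusion immediately gives both the freeness of $Q=\langle\phi_1^{m_1},\dots,\phi_k^{m_k}\rangle$ for all sufficiently large $m_i$ and the hyperbolicity of $\F\rtimes\widehat{Q}$.

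\textbf{The contrapositive of ($\Rightarrow$).} Suppose some $\mathcal{K}^*_i\neq\emptyset$; I will exhibit a $\ZZ^2$ subgroup inside $\F\rtimes\widehat Q$, ruling out hyperbolicity. By Lemma~\ref{sinkprop}, a nonempty nonattracting sink for $\phi_i$ produces a nontrivial conjugacy class $[w]$ whose $\phi_i$-orbit does not grow exponentially. Since $\phi_i$ is rotationless, any periodic conjugacy class is actually fixed, so $\phi_i[w]=[w]$ and hence $\phi_i^{m_i}[w]=[w]$. Choose a lift $\widehat{\phi_i}^{m_i}\in\aut$ inside $\widehat Q$; by definition there exists $g\in\F$ with $\widehat{\phi_i}^{m_i}(w)=gwg^{-1}$. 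Replacing this lift by $\iota_{g^{-1}}\circ\widehat{\phi_i}^{m_i}$ (which differs by an inner automorphism and so is still a lift of $\phi_i^{m_i}$ lying in $E_Q$), we obtain an element $\Phi\in E_Q$ with $\Phi(w)=w$. Then $\Phi$ commutes with the inner automorphism $\iota_w\in\F\le E_Q$, so $\langle \iota_w,\Phi\rangle$ is abelian. Since $w$ has infinite order in $\F$ and $\Phi$ projects to the infinite order element $\phi_i^{m_i}\in Q$, the subgroup $\langle \iota_w,\Phi\rangle$ is isomorphic to $\ZZ^2$. This contradicts hyperbolicity of $\F\rtimes\widehat{Q}$.

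\textbf{Main obstacle.} The only delicate point is the bookkeeping in the contrapositive: one must promote the conjugacy-class-fixing statement $\phi_i^{m_i}[w]=[w]$ to an \emph{honest} fixed point $\Phi(w)=w$ for some lift $\Phi\in E_Q$, so that the resulting $\ZZ^2$ actually lies inside the extension group. The rotationless hypothesis makes this straightforward (it removes the need to pass to a further power), and the freedom to twist the lift by an inner automorphism produces the required $\Phi$. Everything else is either a direct appeal to Lemma~\ref{sinkprop} or a direct appeal to Theorem~\ref{hypffc}.
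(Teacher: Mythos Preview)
Your forward direction is correct and in fact tidier than the paper's argument. The paper re-derives, by a two-case analysis (proper versus filling free factor support, invoking Proposition~\ref{fillinglam} in the filling case), that $\mathcal{L}^\pm(\phi_i)\cap\mathcal{L}^\pm(\phi_j)=\emptyset$ and then feeds this into Theorem~\ref{hypnas}. You observe instead that the sufficiently-different hypothesis already excludes a common fixed vertex in $\FF$, so Theorem~\ref{hypffc} applies directly; since Theorem~\ref{hypffc} is proved earlier and itself wraps the same case analysis, your shortcut is legitimate and not circular.

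Your contrapositive direction has a genuine gap. From $\mathcal{K}^*_i\neq\emptyset$ and Lemma~\ref{sinkprop}(3) you only get a conjugacy class $[w]$ that \emph{fails to grow exponentially}; Lemma~\ref{sinkprop}(3) explicitly allows $[w]$ to grow polynomially with positive degree, and such a class is \emph{not} periodic. Your sentence ``Since $\phi_i$ is rotationless, any periodic conjugacy class is actually fixed, so $\phi_i[w]=[w]$'' silently assumes $[w]$ is periodic, which you have not shown. The rotationless hypothesis promotes periodic to fixed; it does not promote polynomially growing to periodic. Concretely, if $\mathcal{K}^*_i$ consists only of a free-factor component $[F]$ of rank $\ge 2$ on which $\phi_i$ is genuinely (nontrivially) polynomially growing, the particular $[w]$ you picked need not be fixed, and your $\ZZ^2$ does not materialize from that $[w]$.

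The fix is exactly what the paper does: invoke \cite[Lemma 3.1]{Gh-20}, which says that an outer automorphism is atoroidal if and only if every conjugacy class is weakly attracted to some attracting lamination, i.e.\ if and only if its sink is empty. Then $\mathcal{K}^*_i\neq\emptyset$ gives that $\phi_i$ is not atoroidal, hence has a genuinely periodic (so fixed, by rotationlessness) conjugacy class, and your $\ZZ^2$ argument goes through. Alternatively you can extract a fixed class directly from the structural description in Lemma~\ref{sinkprop}(4): either one of the cyclic components $[\langle a_j\rangle]$ is present and $[a_j]$ is already fixed, or some free-factor component $[F]\in\mathcal{F}$ is present and the polynomially growing restriction $\phi_i|_F$ must itself fix a conjugacy class (e.g.\ by examining the bottom NEG stratum of a CT for $\phi_i|_F$). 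Either way, the missing step is precisely the nontrivial content of \cite[Lemma 3.1]{Gh-20}, and once you insert it your argument becomes correct and essentially coincides with the paper's.
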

We remark that in this  theorem $Q$ is not necessarily \emph{convex cocompact} subgroup of $\out$ in the sense of \cite{HamHenStab,DowKT}  and \cite{DowTay2}, since our groups do not quasi isometrically embed in $\FF$. Hence our theorem cannot be obtained via their work. In fact, as far as our knowledge goes, this theorem cannot be obtained by studying the action of $Q$ on any known hyperbolic simplicial complex with a nice $\out$ action.

In a similar vein, we obtain the following necessary and sufficient condition in terms of nonattracting sinks, regarding cusp-preserving relative hyperbolicity of the extension group when $Q$ is generated by sufficiently different collection of outer automorphisms.

\begin{RH}\emph{
 Let $\phi_1, \cdots, \phi_k$ be a collection of  pairwise sufficiently different   and exponentially growing outer automorphisms. Let $\mathcal{K}^*_i$ be the nonattracting sink of $\phi_i,\,\,i \in \{1, \cdots, k\}$ and assume that $\mathcal{K}^*_j\neq \emptyset$ for some fixed $j$. Then the following are equivalent:
 \begin{enumerate}
     \item $\F\rtimes \widehat{Q}$ has a cusp preserving relatively hyperbolic structure where $Q$ is a free group generated by sufficiently large exponents of $\phi_i$'s.
     \item $\mathcal{K}^*_i = \mathcal{K}^*_j$ for all $i$. 
 \end{enumerate}}
\end{RH}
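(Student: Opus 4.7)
The plan splits along the two implications. The common thread is to regard a shared nonattracting sink as a common invariant admissible subgroup system governing the peripheral structure of $E_Q$, and to use Proposition~\ref{suffdiff} as the principal engine for the forward direction while using the admissible-subgroup-system framework of Section~\ref{sec:K} (and \cite{GGrelhyp}) for the reverse.

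For (2)$\Rightarrow$(1), suppose $\mathcal{K}^*_i = \mathcal{K}^*_j =: \mathcal{K}^*$ for every $i$. By Lemma~\ref{sinkprop} and the discussion of Section~\ref{sec:K}, $\mathcal{K}^*$ is a malnormal admissible subgroup system of $\F$ that is invariant under each $\phi_i$, hence under any lift $\widehat{Q}$ to $\aut$. I would promote $\mathcal{K}^*$ to a peripheral system for $E_Q$ by taking, for each $K\in\mathcal{K}^*$, the subgroup of $E_Q$ generated by $K$ together with its $\widehat{Q}$-stabilizer, and invoke the cusp-preserving relatively hyperbolic combination theorem established in \cite{GGrelhyp} to conclude that $E_Q$ is relatively hyperbolic with respect to these peripherals, in a cusp-preserving way. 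The nontrivial input fed into the combination theorem is the hyperbolicity of the electrified complement, which is exactly what the techniques behind Theorem~\ref{hypffc} (pingpong on $\FF$ with fixed points permitted but no common fixed points) deliver once one quotients out the contribution of $\mathcal{K}^*$.

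For (1)$\Rightarrow$(2), I would first prove that every $\mathcal{K}^*_i$ is nonempty. Since $\mathcal{K}^*_j\neq\emptyset$, Theorem~\ref{suffdiffhyp} forbids $E_Q$ from being word hyperbolic, so a cusp-preserving relatively hyperbolic structure on $E_Q$ must possess nontrivial peripherals whose intersections with $\F$ are nontrivial. Pick such a subgroup $H\leq\F$ corresponding to an $\F$-peripheral. The cusp-preserving hypothesis forces each $\phi_i$ to permute the finite set of $\F$-conjugacy classes of these peripherals; rotationlessness (after passing to sufficiently high powers, which is already built into the standing hypothesis on $Q$) upgrades this permutation to a pointwise fixing, so $[H]$ is $\phi_i$-invariant in $\out$ for every $i$. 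By Lemma~\ref{sinkprop}, a $\phi_i$-invariant conjugacy class has bounded $\phi_i$-growth and therefore lies in $\mathcal{K}^*_i$, making each $\mathcal{K}^*_i$ nonempty. I then apply Proposition~\ref{suffdiff}: its case~(2) is incompatible with the existence of a cusp-preserving relatively hyperbolic structure on $E_Q$, so case~(1) must hold and all $\phi_i$ are induced by pseudo-Anosov homeomorphisms of a single one-holed surface $\Sigma$. In this geometric situation each $\mathcal{K}^*_i$ is precisely the conjugacy class of $\pi_1(\partial\Sigma)$, so $\mathcal{K}^*_i = \mathcal{K}^*_j$ for every $i$.

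The step I expect to be the main obstacle is the identification, in the proof of (1)$\Rightarrow$(2), of an arbitrary $\F$-peripheral subgroup of $E_Q$ with an element of $\mathcal{K}^*_i$: the sink is a priori a train-track-theoretic object, and turning ``$\phi_i$ fixes $[H]$'' into ``$H\in\mathcal{K}^*_i$'' requires carefully invoking the classification in Lemma~\ref{sinkprop}, particularly when $H$ is non-cyclic and corresponds to a Nielsen-path or polynomially growing stratum rather than a single fixed class. A secondary difficulty in (2)$\Rightarrow$(1) is verifying that the combination-theorem output is genuinely cusp-preserving, which amounts to checking that the $\F$-intersection of the constructed peripherals of $E_Q$ is exactly $\mathcal{K}^*$ and nothing larger; this requires the sufficiently different hypothesis to rule out accidental identifications among the sinks at the level of the ambient rel hyp structure.
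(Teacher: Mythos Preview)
Your skeleton for $(1)\Rightarrow(2)$ --- show every $\mathcal{K}^*_i\neq\emptyset$, then invoke Proposition~\ref{suffdiff} and rule out its case~(2) via the existence of an admissible subgroup system (Theorem~\ref{main1}) --- matches the paper's. But the step where you populate each $\mathcal{K}^*_i$ has a real gap. You argue that cusp-preservation forces $[H]$ to be $\phi_i$--invariant as a \emph{subgroup} conjugacy class, and then write ``By Lemma~\ref{sinkprop}, a $\phi_i$--invariant conjugacy class has bounded $\phi_i$--growth and therefore lies in $\mathcal{K}^*_i$.'' Lemma~\ref{sinkprop} concerns conjugacy classes of \emph{elements}, and invariance of $[H]$ as a subgroup does not produce a $\phi_i$--periodic or polynomially growing element: $\phi_i$ restricted to $H$ could perfectly well be atoroidal, in which case every conjugacy class carried by $H$ grows exponentially under $\phi_i$ and none lands in $\mathcal{K}^*_i$. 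Nothing in the bare cusp-preserving hypothesis rules this out; you need the train-track structure of $\mathcal{K}^*_j$ itself.

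The paper closes this gap by working entirely on the lamination side rather than through the peripherals. From the contrapositive of Proposition~\ref{nrh} one gets that $\mathcal{K}^*_j$ is carried by $\mathcal{A}_{na}(\Lambda_i)$ for some $\Lambda_i\in\mathcal{L}^+(\phi_i)$ and every $i\neq j$; Lemma~\ref{rhconditions} then forces $\mathcal{K}^*_j$ to consist solely of infinite cyclic groups represented by closed indivisible Nielsen paths of geometric strata, and part~(c) shows none of these classes can sit in a free-factor component of $\mathcal{A}_{na}(\Lambda_i)$. Since $\mathcal{A}_{na}(\Lambda_i)$ has the form $\mathcal{F}\cup\{[\langle c\rangle]\}$, each such class is therefore fixed by $\phi_i$, which is what makes $\mathcal{K}^*_i\neq\emptyset$. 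This is exactly the ``train-track-theoretic'' identification you flagged as the main obstacle, and it is not bypassed by the peripheral argument you sketch.

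For $(2)\Rightarrow(1)$ you are on the right track but over-engineering: once all sinks coincide, $\mathcal{K}^*$ is already admissible for each $\phi_i$ by Lemma~\ref{sinkprop}, and the paper simply applies Theorem~\ref{main1}. There is no separate ``hyperbolicity of the electrified complement'' step to verify, and Theorem~\ref{hypffc} plays no role here. What does need checking (and what both you and the paper leave implicit) is the disjointness $\mathcal{L}^\pm_{\mathcal{K}^*}(\phi_i)\cap\mathcal{L}^\pm_{\mathcal{K}^*}(\phi_j)=\emptyset$ required by Theorem~\ref{main1}; this follows from the sufficiently-different hypothesis via the same case analysis as in the proof of Theorem~\ref{suffdiffhyp} (proper support versus filling, the latter handled by Proposition~\ref{fillinglam}).
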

To summarize, given a pair of sufficiently different, exponentially growing elements of $\out$: All sinks empty gives hyperbolic extensions (Theorem \ref{suffdiffhyp}). No sink empty gives relatively hyperbolic extensions with cusp-preserving structure only for geometric fully-irreducibles (Proposition \ref{suffdiff}). Some sink nonempty gives relatively hyperbolic extensions with cusp-preserving structure only if all sinks are nonempty and are equal (Theorem \ref{RH}).  

\subsection{Plan of the paper:}

Section \ref{Prelim} collects all the definitions and the tools we use.

In Section \ref{sec:3} we investigate obstructions to (relative) hyperbolicity using the dynamics of sufficiently different automorphisms on $\FF$. In this section we also discuss the notion of a sink and include proof of Proposition \ref{suffdiff}.

In Section \ref{sec:4} we continue working  with a specific type of exponentially growing automorphism; a reducible automorphism that is fully irreducible on a free factor (a \emph{partial} fully irreducible). We investigate the conditions on the small displacement sets on $\FF$ of partial fully irreducible automorphisms that determine the geometry of the extension groups they generate.   

Section \ref{sec:Hyp} includes proofs of Theorems \ref{hypnas}, \ref{hypffc} and \ref{RH}. In this section we discuss further the notion of sink and how the sinks of automorphisms determine the geometry of an extension group generated by those automorphisms and prove Theorem \ref{suffdiffhyp}.  We work with partial and relatively fully irreducible elements  directly to exhibit examples of relatively hyperbolic free group extensions. 

Section \ref{sec:applications} discusses further problems such as characterization of non relative hyperbolicity; in this section we give an example which could help characterize non relative hyperbolicity of a free group extension. 

\subsection {Acknowledgements} 
We thank Spencer Dowdall, Chris Leininger, Lee Mosher,
Pranab Sardar, and Sam Taylor for their remarks and comments on the earlier version of the paper. 
We also would like to thank the anonymous referee for carefully reading the paper and providing numerous suggestions that have improved the exposition.

The first author was supported by the Ashoka University faculty research grant. The second author was partially
supported by NSF grant DMS-2137611.

\section{Preliminaries}\label{Prelim}
\subsection{Marked graphs, circuits and path: }

 A \textit{marked graph} is a graph $G$ which is a core graph (a graph with no valence 1 vertices) that is equipped with  a homotopy equivalence to the rose $m: G\to R_n$ (where $n = \text{rank}(\F)$). Thus the fundamental group of $G$  can be identified with $\F$ (up to inner automorphisms). 
 A \textit{circuit} in a marked graph is a locally injective and continuous map
 of $S^1$ into $G$. The set of circuits in $G$ can be identified
 with the set of conjugacy classes in $\F$.

A \emph{path} is an immersion of the interval $[0, 1]$ into $G$ with endpoints at vertices of $G$. Every path can be written as a finite concatenation of edges of $G$, so that there is no backtracking.

 A \emph{line} $\ell$ is a bi-infinite concatenation $\ell = \ldots E_{i-1} E_i E_{i+1} \ldots$, $i\in \mathbb{Z}$ of edges of $G$ without backtracking \emph{i.e.} so that $E_{j-1}^{-1} \neq E_j\neq E_{j+1}^{-1}$ (where $E_{s}^{-1}$ is $E_s$ traveled in opposite direction).   A \emph{ray} $\gamma$ is ``one-sided'' infinite concatenation of edges $\gamma = E_i E_{i+1} \ldots $, $i\in \mathbb{Z}_{\geq 0}$ in $G$ without backtracking. Two lines are said to be asymptotic if they have a common sub-ray. 
  Any continuous map $f$ from $S^1$ or $[0,1]$ to $G$ can be \textit{tightened} to a circuit or path, in other words is freely homotopic to a locally injective and continuous map. Tightened image of a path $\alpha$ under $f$ will be denoted by $f_{\#}(\alpha)$ and we will  not distinguish between circuits or paths that differ by a homeomorphism of their respective domains.

\subsection{Topological representative, EG strata, NEG strata:}
\label{sec:2}
 A \textit{filtration}  of a marked graph $G$ is a strictly increasing sequence $G_0 \subset G_1 \subset \cdots \subset G_k = G$ of subgraphs $G_r$ with no isolated vertices. The filtration is \textit{$f$-invariant} if $f(G_r) \subset G_r$ for all $r$.

 The \textit{stratum of height $r$} is a subgraph $H_r = G_r \setminus G_{r-1}$ together with the  endpoints of edges.  The minimum $r$ such that a  subset of $G$ is contained in $G_r$ is called the \textit{height} of the subset.

For an $f$--invariant filtration, 
the square matrix $M_r$ whose $j^{\text{th}}$ column records the number of times
the image of an edge $e_j$ under $f$ intersects the other edges in $H_r$ is called 
the \textit{transition matrix}  of the stratum $H_r$. 
	 $M_r$ is said to be irreducible if for each  $i,j$, the $i,j$--th entry of some power of $M_r$ is nonzero. In this case we say that the associated stratum $H_r$ is also \textit{irreducible}. When $H_r$ is irreducible, the Perron-Frobenious theorem states that the matrix $M_r$ has a unique eigenvalue $\lambda \ge 1$, called the \textit{Perron-Frobenius eigenvalue}, for which some associated eigenvector has positive entries.
If $\lambda=1$ then $H_r$ is a \textit{nonexponentially growing (NEG) stratum}  whereas if $\lambda>1$ we say that $H_r$ is an \textit{exponentially growing (EG) stratum}.

An automorphism $\phi\in\out$ can be represented by a  homotopy equivalence $f:G\rightarrow G$ that takes vertices to vertices and edges to edge-paths of a marked graph $G$ with marking $\rho: R_n \rightarrow G$, called a \textit{topological representative}. Topological representative $f$ preserves the marking of $G$, in other words $\overline{\rho}\circ f \circ \rho: R_n \rightarrow R_n$ represents $R_n$. A nontrivial path $\alpha$  in G is a \emph{periodic Nielsen path} if $f^k_{\#}(\alpha)=\alpha$ for some $k$, where the smallest such  $k$ is called the \emph{period}. $\alpha$ is a Nielsen path if $k=1$. A
periodic Nielsen path is \emph{indivisible} (iNP) if it cannot be written as a concatenation of 
nontrivial periodic Nielsen paths.

Given a topological representative $f: G\to G$ let $Tf$ be such that $Tf(E)$ is the first edge in the edge path associated to $f(E)$. A \emph{turn} is a pair of edges $\{E_i, E_j\}$ of edges and we let $Tf(E_i,E_j) = (Tf(E_i),Tf(E_j))$ making $Tf$ a map that takes turns to turns. We say that a
non-degenerate (i.e, $i\neq j$) turn is \emph{illegal} if for some iterate of $Tf$ the turn becomes degenerate; otherwise the
 turn is legal. A path is said to be \emph{legal path} if it contains only legal turns and it is $r-legal$ if it is of height $r$ and all its illegal turns are in $G_{r-1}$.

 \textbf{Relative train track map.} 
 Given $\phi\in \out$ and a topological representative $f:G\rightarrow G$ with a filtration $G_0\subset G_1\subset \cdot\cdot\cdot\subset G_k$ which is preserved by $f$,
 we say that $f$ is a relative train track map if the following conditions are satisfied: \label{rtt}
 \begin{enumerate}
  \item $f$ maps $r$-legal paths to  $r$-legal paths.
  \item If $\gamma$ is a path in $G_{r-1}$ with endpoints in $G_{r-1}\cap H_r$   then $f_\#(\gamma)$ is non-trivial.
  
  \item If $E$ is an edge in $H_r$ then $Tf(E)$ is an edge in $H_r$. In particular,
  every turn consisting of a direction of height $r$ and one of height $< r$  is legal.
 \end{enumerate}
 
By \cite[Theorem 5.1.5]{BFH-00}  every  $\phi\in\out$ has a relative train-track map representative which satisfies some useful conditions in addition to the ones we listed above. 

\subsection{Weak topology}
\label{sec:weak} Given a graph $G$, we define an equivalence relation on the set of all paths, rays, lines, and circuits in $G$ by declaring two of paths/rays/lines/circuits to be equivalent if they differ by a homeomorphism of their domains. 

 Let $\widehat{\mathcal{B}}(G)$ denote the compact space of equivalence classes of circuits and finite paths, rays and lines in a graph $G$, whose endpoints (if any) are vertices of $G$.  
For each finite path $\gamma$ in $G$, we denote by $\widehat{N}(G,\gamma)$ the  set of all paths and circuits in $\widehat{\mathcal{B}}(G)$ which have $\gamma$ as its subpath. The collection of all such sets gives a basis for a topology on $\widehat{\mathcal{B}}(G)$ 
 called \textit{weak topology}. Let $\mathcal{B}(G)\subset \widehat{\mathcal{B}}(G)$ be the compact subspace of all lines in $G$ with the induced topology. 

 Two distinct points in $\partial \mathbb{F}$ determine a \emph{line}, up to reversing the direction. Let $\widetilde{\mathcal{B}}=\{ \partial \mathbb{F} \times \partial \mathbb{F} - \vartriangle \}/(\mathbb{Z}/2 \mathbb Z)$ be the set of pairs of boundary points of $\F$ where $\vartriangle$ is the diagonal and $\mathbb{Z}_2$ acts by interchanging factors. We can give the weak topology to
$\widetilde{\mathcal{B}}$, induced by the Cantor topology on $\partial \mathbb{F}$.

$\mathbb{F}$ acts on $\widetilde{\mathcal{B}}$ and the quotient space $\mathcal{B}=\widetilde{\mathcal{B}}/\mathbb{F}$ is compact but non-Hausdorff as such the topology is called  \emph{weak}. The quotient topology is also called the \textit{weak topology}. For any marked graph $G$, there is a natural identification $\mathcal{B}\approx \mathcal{B}(G)$.

  We call the  elements of $\mathcal{B}$ \textit{lines} as well. A lift of a line $\gamma \in \mathcal{B}$ is an element  $\widetilde{\gamma}\in \widetilde{\mathcal{B}}$ that projects to $\gamma$ under the quotient map and the two elements of $\widetilde{\gamma}$ are called its endpoints.



  A line (or a path) $\gamma$ is said to be \textit{weakly attracted} to a line (path) $\beta$ under the action of $\phi\in\out$, if  for some  $k$  $\phi^k(\gamma)$ converges to $\beta$ in the weak topology, in  other words, if any given finite subpath of $\beta$ is contained in $\phi^k(\gamma)$ for some  $k$. Similarly if we have a homotopy equivalence $f:G\rightarrow G$,  a line(path) $\gamma$ is said to be \textit{weakly attracted} to a line(path) $\beta$ under the action of $f_{\#}$ if $f_{\#}^k(\gamma)$ weakly converges to $\beta$. 
the \textit{accumulation set} of  $\gamma$ is the set of lines  $l\in \mathcal{B}(G)$ that are elements of the weak closure of a ray $\gamma$ in $G$. This is the set of lines $l$ such that every finite subpath of $l$
occurs infinitely many times as a subpath $\gamma$. Similarly, the weak accumulation set of some point  $\xi\in\partial\mathbb{F}$ is the set of lines in the weak closure of any of the asymptotic rays in its equivalence class.	
\subsection{Free factor systems and malnormal subgroup systems.}
A  finite collection 
$\mathcal{K} = \{[K_1], [K_2], .... ,[K_s]\}$  of conjugacy classes of nontrivial, finite rank subgroups $K_s<\F$ is called a \textit{subgroup system}. 
A  subgroup $K<\F$ is \emph{malnormal} if $xK_sx^{-1}\cap K$ is trivial for all $x\in \F-K$.  A subgroup system is  called \textit{malnormal} if each of its subgroups $[K_s]\in \calK$ is malnormal and  for all $[K_s],[K_t]\in \mathcal{K}$ if $ [K_s] \cap [K_t]$ is non-trivial then $s=t$. 
Given two malnormal subgroup systems $\mathcal{K}, \mathcal{K}'$ we define a partial ordering  $\sqsubset$ 
 on the set of subgroup systems by $\mathcal{K}\sqsubset \mathcal{K}'$ if for each conjugacy class of subgroup $[K]\in \mathcal{K}$ there exists some conjugacy class of subgroup $[K']\in \mathcal{K}'$ such that $K < K'$.

 Given a finite collection $\{K_1, K_2,.....,K_s\}$ of subgroups of $\F$ , we say that this collection determines a \textit{free factorization} of $\F$ if 
$\F = K_1 * K_2 * .....* K_s$.
A \emph{free factor system} $\mathcal{F}:=\{[F_1], [F_2],.... [F_p]\}$ of $\F$ is a finite collection of conjugacy classes of subgroups such that there is a free factorization of $\F$ of the form
$\F = F_1 * F_2 * ....*F_p* B$, where $B$ is some (possibly trivial) finite rank subgroup of $\F$ . Every free factor system is a malnormal subgroup system.

 A malnormal subgroup system $\mathcal{K}$ \textit{carries} a conjugacy class $[c]\in \F$ if there exists some $[K]\in\mathcal{K}$ such that $c\in K$. We say that $\mathcal{K}$ carries a line $\gamma$ if one of the following equivalent conditions hold:
	\begin{enumerate}
 		\item $\gamma$ is the weak limit of a sequence of conjugacy classes carried by $\mathcal{K}$.
 		\item There exists some $[K]\in \mathcal{K}$ and a lift $\widetilde{\gamma}$ of $\gamma$ so that the endpoints of 		$\widetilde{\gamma}$ are in $\partial K$.
	\end{enumerate}

 For any marked graph $G$ and any subgraph $H \subset G$, the fundamental groups of the noncontractible components of $H$ form a free factor system, denoted by $[\pi_1(H)]$. Every free factor system $\mathcal{F}$ can be realized as $[\pi_1(H)]$ for some nontrivial core subgraph H of some marked graph $G$. An equivalent way of saying that a line or circuit  $\gamma$ is carried by $\mathcal{F}$ is that for any marked graph $G$ and a subgraph $H \subset G $ with $[\pi_1(H)]=\mathcal{F}$, the realization of $\gamma$ in $G$ is contained in $H$.

We have the following fact: 
 \begin{lemma}\cite[Fact 1.8]{HM-20}
	 Given a subgroup system $\mathcal{K}$ the set of lines/circuits carried by $\mathcal{K}$ is a closed set in the weak topology.
\end{lemma}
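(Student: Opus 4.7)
The plan is to work sequentially: since the weak topology on $\mathcal{B}(G)$ has a countable basis (one basic open $\widehat{N}(G,\sigma)$ for each finite path $\sigma$ of $G$) it is first countable, so sequential closure equals closure, and it suffices to show that whenever $\gamma_n \to \gamma$ weakly with each $\gamma_n$ carried by $\mathcal{K}$, also $\gamma$ is carried.

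Finiteness of $\mathcal{K}=\{[K_1],\ldots,[K_s]\}$ gives, by pigeonhole, a subsequence along which all $\gamma_n$ are carried by a single class $[K]\in\mathcal{K}$. Fix a representative $K<\F$ and let $p\colon G_K \to G$ be the covering corresponding to $K$, so that $\pi_1(G_K)=K$. Because $K$ has finite rank, $G_K$ admits a finite \emph{Stallings core} $C\subset G_K$ with $\pi_1(C)=K$, and $G_K\setminus C$ is a disjoint union of trees attached to $C$. Each $\gamma_n$, being carried by $[K]$, lifts under $p$ to a bi-infinite non-backtracking line $\widetilde{\gamma}_n$ in $G_K$; since it cannot escape into a tree without backtracking, $\widetilde{\gamma}_n$ lies entirely in $C$.

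To build a lift of $\gamma$ itself, pick a distinguished edge $e_0$ appearing in $\gamma$. By weak convergence, for all sufficiently large $n$ some occurrence of $e_0$ appears in $\gamma_n$; lift that occurrence to an edge of $\widetilde{\gamma}_n\subset C$. Since $C$ has finitely many edges, pigeonhole again yields an edge $\widetilde{e}_0\subset C$ that serves as such a lift for infinitely many $n$, and we pass to this subsequence. For any finite subpath $\sigma$ of $\gamma$ containing $e_0$, $\sigma$ eventually appears in $\gamma_n$ around the chosen occurrence of $e_0$ and lifts uniquely to a subpath of $\widetilde{\gamma}_n$ starting at $\widetilde{e}_0$. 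Letting $\sigma$ exhaust $\gamma$ through a nested sequence centered at $e_0$, these unique lifts coherently glue into a bi-infinite line $\widetilde{\gamma}\subset C$ with $p(\widetilde{\gamma})=\gamma$, exhibiting $\gamma$ as carried by $[K]$.

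The main obstacle is justifying the coherence claim: that each longer subpath's lift truly extends the shorter ones, even though the occurrences of $e_0$ and $\sigma$ in the varying $\gamma_n$ might a priori be at unrelated positions. This is handled by requiring the lifted occurrence of $\sigma$ to be the one surrounding the previously chosen occurrence of $e_0$ in $\gamma_n$; by uniqueness of path lifts in covering spaces, once $e_0\mapsto \widetilde{e}_0$ is fixed, all surrounding edges in $\widetilde{\gamma}_n$ are determined, so every subpath of $\gamma_n$ around that $e_0$ lifts exactly along the edges of $\widetilde{\gamma}_n$ adjacent to $\widetilde{e}_0$, giving the desired coherent tower of lifts.
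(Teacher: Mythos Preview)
The paper itself does not prove this lemma; it is simply quoted from \cite[Fact 1.8]{HM-20}. Your strategy---reduce to one $[K]$, lift the $\gamma_n$ into the finite Stallings core $C\subset G_K$, and extract a lift of $\gamma$ there---is the standard one and is correct in outline.

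There is, however, a real gap in the coherence step. You assert that for each finite subpath $\sigma$ of $\gamma$ through $e_0$, the copy of $\sigma$ that weak convergence produces inside $\gamma_n$ sits \emph{around the occurrence of $e_0$ you already chose}. Weak convergence only says that $\sigma$ occurs \emph{somewhere} in $\gamma_n$; nothing ties that occurrence to your selected $e_0$. (Think of $\gamma=\cdots e_0 ab\cdots$ while $\gamma_n=\cdots e_0 xy\cdots e_0 ab\cdots$, with the first $e_0$ chosen.) Your last paragraph correctly observes that once $e_0\mapsto\widetilde e_0$ is fixed the edges of $\widetilde\gamma_n$ near $\widetilde e_0$ are determined, but this only tells you the lift of the \emph{actual} neighborhood of the chosen $e_0$ in $\gamma_n$---it does not show that this neighborhood equals $\sigma$, which is the point at issue.

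The repair is short: pigeonhole on the $\sigma_k$ rather than on $e_0$. Write $\sigma_k$ for the length-$(2k{+}1)$ subpath of $\gamma$ centered at $e_0$. You have already shown that each $\sigma_k$ lifts to a path inside $C$ (as a subpath of some $\widetilde\gamma_n$). Let $L_k$ be the set of edges $\widetilde e\subset C$ over $e_0$ such that the unique lift of $\sigma_k$ through $\widetilde e$ stays in $C$. Then each $L_k$ is nonempty, $L_1\supseteq L_2\supseteq\cdots$ since $\sigma_k\subset\sigma_{k+1}$, and all $L_k$ lie in the finite set of preimages of $e_0$ in $C$; hence $\bigcap_k L_k\neq\emptyset$. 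Any $\widetilde e_0$ in this intersection gives nested lifts $\widetilde\sigma_k\subset C$ whose union is the desired line $\widetilde\gamma\subset C$ covering $\gamma$.
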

 As a consequence, given a malnormal subgroup system $\mathcal{K}$ and a sequence of lines / circuits $\{\gamma_n\}$, if  $\mathcal{K}$ carries every weak limit of every subsequence of $\{\gamma_n\}$, then $\gamma_n$ is carried by $\mathcal{K}$ for all sufficiently large $n$ (\cite[Lemma 1.11]{HM-20}).

 Given two free factor systems, an extension of the notion of ``intersection '' of subgroups, extended to free factor systems is called `` meet'' of free factors:
	\begin{lemma}[\cite{BFH-00}, Section 2.6]\label{fill} Every collection $\{\mathcal{F}_i\}$ of free factor systems has a well-defined meet $\wedge\{\mathcal{F}_i\} $, which is the unique maximal free factor system $\mathcal{F}$ such that $\mathcal{F}\sqsubset \mathcal{F}_i$ for all $i$. Moreover,
 for any free factor $F< \F$ we have $[F]\in \wedge\{\mathcal{F}_i\}$ if and only if there exists an indexed collection of subgroups $\{A_i\}_{i\in I}$ such that $[A_i]\in \mathcal{F}_i$ for each $i$ and $F=\bigcap_{i\in I} A_i$.
\end{lemma}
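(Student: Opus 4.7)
The plan is to construct the meet explicitly and then verify its universal property. The key algebraic input is the classical fact from free group theory (due to Hanna Neumann, Burns, and Stallings) that if $A$ and $B$ are free factors of $\F$, then $A\cap B$ is itself a free factor of both $A$ and $B$, and hence (by transitivity of the free factor relation) a free factor of $\F$. By an easy induction this extends to any finite intersection of free factors. This is what will make the candidate meet well defined as a collection of free factors.

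First, I would define $\mathcal{F}$ to be the set of all conjugacy classes $[F]$ of nontrivial finite-rank subgroups $F<\F$ which arise as an intersection $F=\bigcap_{i\in I}A_i$ for some choice of representatives $A_i$ with $[A_i]\in \mathcal{F}_i$. By the algebraic fact above, each such $F$ is a free factor of $\F$. The next step is to verify that this collection, modulo conjugacy, actually assembles into a free factor system, i.e.\ that there is a single free factorization $\F=F_1*\cdots *F_p*B$ realizing all the conjugacy classes in $\mathcal{F}$ simultaneously. The cleanest way to do this is to work with geometric realizations: for each $i$ choose a marked graph $G_i$ and a core subgraph $H_i\subset G_i$ with $[\pi_1(H_i)]=\mathcal{F}_i$, and then pass to a common refinement (a marked graph $G$ carrying subgraphs $H_i'\subset G$ realizing each $\mathcal{F}_i$). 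The conjugacy classes of components of $\bigcap_i H_i'$ then produce the free factors in $\mathcal{F}$, and the usual core-subgraph argument shows $[\pi_1(\bigcap_i H_i')]$ is indeed a free factor system.

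Next I would verify $\mathcal{F}\sqsubset \mathcal{F}_i$ for each $i$ directly from the construction: every $[F]\in \mathcal{F}$ is of the form $F=\bigcap_{j}A_j$ with $F<A_i$, so $[F]\sqsubset[A_i]\in \mathcal{F}_i$. Maximality is the universal property: suppose $\mathcal{F}'$ is another free factor system with $\mathcal{F}'\sqsubset \mathcal{F}_i$ for all $i$. Given any $[F']\in\mathcal{F}'$, choose for each $i$ a free factor $A_i$ with $[A_i]\in\mathcal{F}_i$ and $F'<A_i$ (up to conjugacy); after simultaneously conjugating we may assume all inclusions hold on the nose, so $F'\subset \bigcap_i A_i$. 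Since $F'$ and $\bigcap_i A_i$ are both free factors of $\F$ and the former is contained in the latter, their conjugacy classes satisfy $[F']\sqsubset[\bigcap_i A_i]\in\mathcal{F}$, proving $\mathcal{F}'\sqsubset\mathcal{F}$. Uniqueness of the meet follows formally from maximality with respect to the partial order $\sqsubset$. Finally, the ``moreover'' clause is exactly the characterization given by the construction.

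I expect the main obstacle to be the second step: showing that the conjugacy classes of intersections genuinely organize into a single free factor system rather than merely a collection of free factors. Care is needed to select representatives $A_i$ so that the various intersections $\bigcap_i A_i$ obtained from different choices account for all conjugacy classes exactly once, and to rule out degenerate overlaps. The common-refinement argument via marked graphs sidesteps this issue cleanly, but verifying that distinct conjugacy classes in the constructed $\mathcal{F}$ correspond to distinct components of $\bigcap_i H_i'$ (and conversely) requires some bookkeeping with the Bass--Serre/graph-of-spaces picture underlying the free factorizations defining each $\mathcal{F}_i$.
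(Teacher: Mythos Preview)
The paper does not prove this lemma; it is quoted verbatim from \cite{BFH-00}, Section~2.6, and no argument is given. So there is nothing to compare your approach to in this paper itself.

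As a standalone proof sketch, your overall strategy (intersections of free factors are free factors, then verify the universal property) is the right one and matches the spirit of the original argument in \cite{BFH-00}. However, there is a genuine gap in your second step. You write that one should ``pass to a common refinement (a marked graph $G$ carrying subgraphs $H_i'\subset G$ realizing each $\mathcal{F}_i$)''. Such a common marked graph need not exist: two free factor systems are in general realized by \emph{incompatible} free splittings, and there is no reason a single marked graph should contain core subgraphs realizing all of the $\mathcal{F}_i$ simultaneously. (Compatibility of free splittings is a nontrivial condition, and distinct free factor systems typically force incompatible splittings.) So the sentence ``the conjugacy classes of components of $\bigcap_i H_i'$ then produce the free factors in $\mathcal{F}$'' is not justified as stated.

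The fix, which is essentially what \cite{BFH-00} does, is to argue purely algebraically for a \emph{pair} of free factor systems first: if $\F=A_1*\cdots*A_p*B$ realizes $\mathcal{F}_1$ and $[C]\in\mathcal{F}_2$, then the Kurosh subgroup theorem (or the fact you cite, that $A_j\cap C^w$ is a free factor of $C$) gives a free factorization of $C$ whose factors are exactly the nontrivial intersections $A_j\cap C^w$; assembling these over all $[C]\in\mathcal{F}_2$ produces the meet $\mathcal{F}_1\wedge\mathcal{F}_2$ as a genuine free factor system. For an arbitrary collection one then observes that any descending $\sqsubset$--chain of free factor systems in $\F$ stabilizes (rank is finite), so the meet of the whole collection equals the meet of finitely many of its members, and one finishes by iterating the pairwise construction. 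Your maximality and ``moreover'' arguments are fine once the existence step is repaired.
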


The \textit{free factor support}  $\mathcal{F}_{supp}(B)$ of a set of lines $B$ in $\mathcal{B}$ is defined as the meet of all free factor systems that carries $B$ (\cite{BFH-00}). If $B$ is a single line then $\mathcal{F}_{supp}(B)$ is
single free factor. We say that a set of lines $B$ is \textit{filling} if $\mathcal{F}_{supp}(B)=\F$.

\subsection{Attracting laminations and nonattracting subgroup systems:}

For any marked graph $G$, there is a  natural identification $\mathcal{B}\approx \mathcal{B}(G)$ which induces a bijection between the closed subsets of $\mathcal{B(G)}$ of $\mathcal{B}$. A \textit{lamination} $\Lambda$ , is a closed
subset of any of these two sets. Given a lamination $\Lambda\subset \mathcal{B}$ we look at the corresponding lamination in $\mathcal{B}(G)$ as the
realization of $\Lambda$ in $G$. An element $\lambda\in \Lambda$ is called a \textit{leaf} of the lamination.
	
 A lamination $\Lambda$ is called an \textit{attracting lamination} for $\phi$ if it is the weak closure of a line $\ell$ (called the \textit{generic leaf of $\Lambda$}) satisfying:
\begin{itemize}
 \item $\ell$ is bi-recurrent leaf of $\Lambda$: every finite subpath of $\ell$ occurs infinitely many times as subpath in both directions.
\item There is a neighborhood $V^+$ such that every line in $V^+$ is weakly attracted to $\ell$ in the weak topology. $V^+$ is called an \textit{attracting neighborhood} of $\ell$.  (see \cite[Definition 3.1.1]{BFH-00})
\item no lift $\widetilde{\ell}\in \mathcal{B}$ of $\ell$ is the axis of a generator of a rank 1 free factor of $\F$ .
\end{itemize}
Attracting neighborhoods of $\Lambda$ are defined by choosing sufficiently long segments of generic leaves of $\Lambda$. Note that if $V^+$ is an attracting neighborhood, then $\phi(V^+)\subset V^+$. 
	
  Associated to each $\phi\in \out$ is a finite set $\mathcal{L}^+(\phi)$ of laminations,  called the set of \textit{attracting laminations} of $\phi$ (\cite{BFH-00}) . Similarly we define  the set of attracting laminations (or the set of \emph{repelling laminations}) $\mathcal{L}^-(\phi)$ of $\phi^{-1}$ (or of $\phi$). Both $\mathcal{L}^+(\phi)$ and $\mathcal{L}^-(\phi)$ are $\phi$-invariant (\cite[Lemma 3.1.13, Lemma 3.1.6]{BFH-00}). Given a relative train-track map $f: G\to G$ representing $\phi$, then there is a bijection between  the set $\mathcal{L}^+(\phi)$ and the set of exponentially growing strata in $G$ which is determined by the height of a generic leaf of $\Lambda^+\in\mathcal{L}^+(\phi)$ (\cite[Section 3]{BFH-00}).

 Free factor support of an element $\Lambda$ of $\mathcal{L}^+(\phi)$ or $\mathcal{L}^-(\phi)$ is the smallest free factor system that carries each leaf of  $\Lambda$, denoted  $\mathcal{F}_{supp}(\Lambda)$ ([\cite{BFH-00}, Section 3.2]). Free factor support of $\Lambda^+\in\mathcal{L}^+(\phi)$ is equal to free factor support of a generic leaf of $\Lambda^+$ (\cite[Corollary 2.6.5, Corollary 3.1.11]{BFH-00}). If $\Lambda_1, \Lambda_2 \in \mathcal{L}^+(\phi)$ then $\Lambda_1 = \Lambda_2 \Leftrightarrow  \mathcal{F}_{supp}(\Lambda_1) = \mathcal{F}_{supp}(\Lambda_2)$ (\cite[Fact 1.14]{HM-20}).
	
An element of $\mathcal{L}^+(\phi)$ is dual to an element of $\mathcal{L}^-(\phi)$ if these two elements  have the same free factor support. This imposes a bijection between these two sets \cite[Lemma 3.2.4]{BFH-00}.

 A line/circuit $\gamma$ is said to be \emph{weakly attracted} to $\Lambda_1\in\mathcal{L}^+(\phi)$ if $\gamma$ is weakly attracted to some (hence every) generic leaf of $\Lambda$ under action of $\phi$. No leaf of any element of $\mathcal{L}^-(\phi)$ is ever attracted to any leaf of any element of $\mathcal{L}^+(\phi)$. No Nielsen path is weakly attracted to either an element of $\mathcal{L}^+(\phi)$ or an element of $\mathcal{L}^-(\phi)$.

 An attracting lamination $\Lambda^+$ and a repelling lamination $\Lambda^-$ of $\phi$ cannot have leaves which are asymptotic, for otherwise it would violate the fact that no leaf of $\Lambda^-$ is weakly attracted to $\Lambda^+$. This allows us to choose sufficiently long subpaths of generic leaves of $\Lambda^+, \Lambda^-$ and construct attracting and repelling neighborhoods $V^+, V^-$ so that $V^+\cap V^-=\emptyset$.

An element of $\Lambda\in \mathcal{L}^+(\phi)$ is said to be \emph{topmost} if there does not exists any $\Lambda_j\in\mathcal{L}^+(\phi)$ such that $\Lambda \subset \Lambda_j$.

 Elements of $\mathcal{L}^+(\phi)$ can be divided into two distinct classes. If there exists a finite, $\phi$-invariant collection of  distinct, nontrivial  conjugacy classes $\mathcal{C} = \{[c_1], [c_2], \ldots [c_s]\}$ such that $\mathcal{F}_{supp}(\mathcal{C}) = \mathcal{F}_{supp}(\Lambda^+)$, then $\Lambda^+ \in \mathcal{L}^+(\phi)$ is said to be \emph{geometric}(\cite[Definition 2.19]{HM-20}). It is said to be \emph{nongeometric} otherwise.


The \emph{nonattracting subgroup system} of an attracting lamination was first introduced by Bestvina-Feighn-Handel in \cite{BFH-00} and later explored in great details by Handel-Mosher in \cite[Part III, pp-192-202]{HM-20}. Nonattracting subgroup system records the information about the lines and circuits which are not attracted to the lamination.  We will list some of the properties which are central to our proofs.
\begin{lemma}(\cite{HM-20}- Lemma 1.5, 1.6)\label{lem:na}
\label{NAS} Let $\phi\in\out$ and $\Lambda^+\in \mathcal{L}^+(\phi)$ be an attracting lamination such that $\phi(\Lambda^+) = \Lambda^+$. Then there exists a subgroup system $\mathcal{A}_{na}(\Lambda^+)$ such that:
 \begin{enumerate}
  \item $\mathcal{A}_{na}(\Lambda^+)$ is a malnormal subgroup system and the set of lines carried by $\mathcal{A}_{na}(\Lambda^+)$ is closed in the weak topology.
  \item A conjugacy class $[c]$ is not attracted to $\Lambda^+$ if and only if it is carried by $\mathcal{A}_{na}(\Lambda^+)$.
  \item If each conjugacy class of  a finite rank subgroup $B < \F$ is not weakly attracted to $\Lambda^+_\phi$, then there exists some
  $A < \F$ such that $B< A$ and $[A]\in \mathcal{A}_{na}(\Lambda^+_\phi)$.
  \item  If $\Lambda^-$ and  $\Lambda^+$ are dual to each other, we have $\mathcal{A}_{na}(\Lambda^+)= \mathcal{A}_{na}(\Lambda^-)$.
  \item $\mathcal{A}_{na}(\Lambda^+)$ is a free factor system if and only if $\Lambda^+$ is not geometric.
  \item If $\{\gamma_n\}_{n\in\mathbb{N}}$ is a sequence of lines such that every weak limit of every subsequence of $\{\gamma_n\}$ is carried by $\mathcal{A}_{na}(\Lambda^+)$ then for all sufficiently large $n$, $\{\gamma_n\}$ is carried by $\mathcal{A}_{na}(\Lambda^+)$.  
  
 \end{enumerate}
\end{lemma}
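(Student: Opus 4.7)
The plan is to construct $\mathcal{A}_{na}(\Lambda^+)$ explicitly from a well-chosen train-track representative of $\phi$ and then to read off the six properties from its combinatorics. First I would fix a CT (or improved relative train-track) representative $f\from G\to G$ of a positive power of $\phi$ with filtration $G_0\subset G_1\subset\cdots\subset G_k=G$; such a representative exists by \cite[Theorem 5.1.5]{BFH-00}, and passing to a positive power is harmless because $\phi(\Lambda^+)=\Lambda^+$. Under the bijection between $\mathcal{L}^+(\phi)$ and EG strata recalled in Section~\ref{sec:weak}, $\Lambda^+$ is realized by a unique EG stratum $H_r$ of $G$.

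Next I would define the candidate $\mathcal{A}_{na}(\Lambda^+)$ in two cases. Let $Z\subset G_r$ be $G_{r-1}$ together with all vertices of $H_r\cap G_{r-1}$. In the \emph{nongeometric case}, meaning there is no indivisible Nielsen path of height exactly $r$, I would set $\mathcal{A}_{na}(\Lambda^+):=[\pi_1(Z)]$, which is a free factor system because $Z$ is a proper core subgraph of $G_r$. In the \emph{geometric case} there is, up to reversal and iteration, a unique iNP $\rho$ of height $r$; I would form the $2$-complex $Y:=Z\cup\rho$ by attaching $\rho$ along its endpoints and take $\mathcal{A}_{na}(\Lambda^+)$ to be the collection of conjugacy classes of fundamental groups of the noncontractible components of $Y$.

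For property (2) the plan is to chase iterates: a circuit that starts inside $Y$ stays in $Y$ under $f_\#$ (using that $f(Z)\subset Z$ and $f_\#(\rho)=\rho$), so it cannot pick up an attracting neighborhood of any generic leaf of $\Lambda^+$; conversely, the RTT property together with the fact that long legal height-$r$ subpaths map to long subpaths of the generic leaf $\ell$ forces every non-attracted circuit into $Y$ after finitely many iterates. Property (5) is built into the construction. For malnormality in (1) I would handle the nongeometric case as malnormality of a free factor system, and the geometric case by a Bass--Serre computation on $Y$, using indivisibility of $\rho$ to rule out unwanted conjugations into $\pi_1(Z)$. Weak-closedness of the carried line set follows because the criterion for being carried is decided by endpoints in $\partial K\subset\partial\F$, which are closed. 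Property (3) is then a formal consequence of (1) and (2) applied to every conjugacy class in $B$ and collected into a single $[A]$ by malnormality; property (6) is the contrapositive of weak-closedness; and (4) follows because, for the dual $\Lambda^-$ and a CT representative of $\phi^{-1}$, the EG stratum producing $\Lambda^-$ has the same free factor support as $H_r$, so the non-attracted and non-repelled conjugacy classes are described by the same subgraph $Y$.

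The main obstacle will be the geometric case of the construction: one must show that $Y$ really captures all the non-attracted conjugacy classes beyond $\pi_1(Z)$, and nothing more. This rests on the classification of periodic indivisible Nielsen paths of EG strata---in particular on the uniqueness (up to reversal and concatenation) of an iNP of height $r$ crossing $H_r$---and on the equivalence between geometricity of $\Lambda^+$ and the existence of such an iNP, together with the verification that the $\pi_1$'s of the components of $Y$ really form a malnormal collection. Once that structural input is granted, the remaining content of the lemma is bookkeeping with the weak topology and with graph-of-groups decompositions.
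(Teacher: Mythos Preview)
The paper does not prove this lemma at all: it is stated with an explicit citation to \cite{HM-20}, Lemmas~1.5 and~1.6, and is used as a black box throughout. So there is no ``paper's own proof'' to compare against. Your plan is, in outline, the Handel--Mosher construction itself, and the overall shape (build $\mathcal{A}_{na}(\Lambda^+)$ from the subgraph $G_{r-1}$ and, in the geometric case, the height-$r$ iNP; then read off (1)--(6)) is correct.

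Two steps in your sketch are genuinely too quick. First, your argument for~(4) does not work as written: the CT representative for $\phi^{-1}$ lives on a \emph{different} marked graph, so ``the same subgraph~$Y$'' is meaningless across the two representatives. The actual proof needs an \emph{intrinsic} characterization---e.g.\ that a conjugacy class is carried by $\mathcal{A}_{na}(\Lambda^+)$ iff it is not weakly attracted to $\Lambda^+$---together with the (nontrivial) fact that the set of conjugacy classes not attracted to $\Lambda^+$ under $\phi$ coincides with the set not attracted to $\Lambda^-$ under $\phi^{-1}$. Only then does duality yield $\mathcal{A}_{na}(\Lambda^+)=\mathcal{A}_{na}(\Lambda^-)$. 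Second, (6) is \emph{not} the contrapositive of the closedness in~(1): closedness of the carried set says limits of carried lines are carried, while~(6) is the converse direction for sequences. The argument you want is that ``not carried by $\mathcal{A}_{na}(\Lambda^+)$'' is witnessed by containing one of finitely many fixed finite subpaths (height-$r$ edges outside the iNP), so if infinitely many $\gamma_n$ are not carried then some fixed such subpath occurs infinitely often, and any weak limit through that subsequence is itself not carried. This is the content of \cite[Lemma~1.11]{HM-20}, and it does use the specific combinatorics, not just abstract closedness.
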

A pseudo-Anosov on a surface with non empty boundary induces a fully irreducible automorphism called  \emph{geometric} fully irreducible. Geometricity was relativized by Bestvina-Feighn Handel \cite{BFH-00} by extending it to so that it is a property of an
EG stratum  of a relative train track map.
The definition of geometricity is expressed in terms of the existence of what we call
a ``geometric model" (See \cite{HM-20} for details of constructions of geometric models.)

When $\Lambda^+$ is geometric, the geometric model gives us the structure of the nonattracting subgroup system (\cite[Definition 2.1, Lemma 2.5]{HM-20}). In particular, there is a unique closed indivisible Nielsen path $\rho_r$ of height $r$. (\cite[Fact 1.42]{HM-20})

 The facts listed above show that when $\Lambda^+$ is geometric, $\mathcal{A}_{na}(\Lambda^+) = \mathcal{F} \cup\{[F_m]\}$, where $\mathcal{F}$ is some free-factor system such that $[\pi_1 G_{r-1}] \sqsubset\mathcal{F}$. Moreover,  $\phi$ restricted to $F_m$ has polynomial growth and if $[c]$ is the conjugacy class determined by $\rho_r$, then $[c]$ is carried by $[F_m]$. We shall refer to the component $[F_m]$ a \emph{geometric component} of $\mathcal{A}_{na}(\Lambda^+)$. In case $H_r$ is a top stratum, $\mathcal{A}_{na}(\Lambda^+) = \mathcal{F} \cup\{[\langle c \rangle]\}$.

\subsection{Completely split improved relative train track (CT) maps}\label{sec:CT}
A \textit{splitting} of a line, a path or a circuit $\alpha$ is a concatenation $ \ldots\alpha_0\alpha_1 \ldots\alpha_k\ldots $ of subpaths of $\alpha$ in $G$ 
 such that for all $i\geq 1$, $f^i_\#(\alpha) =  \ldots f^i_\#(\alpha_0)f^i_\#(\alpha_1)\ldots f^i_\#(\alpha_k)\ldots$, for a relative train track map $f:G\rightarrow G$.
 The subpath $\alpha_i$' s in a splitting are called \emph{terms} or \emph{components} of the splitting of $\alpha$. The notation $\alpha\cdot\beta$ will denote a splitting and $\alpha\beta$ will denote a concatenation of nontrivial paths $\alpha, \beta$.

  A splitting $\alpha_1\cdot\alpha_2\cdots\cdot \alpha_k$ of a  non trivial path or circuit $\alpha$ is  a \textit{complete splitting} if each component  $\alpha_i$ is either a single edge in an irreducible stratum, an indivisible Nielsen path, an
exceptional path (see \cite[Definition 4.1]{FH-11}) or a \emph{taken} connecting path in a zero stratum (see \cite[Definition 4.4]{FH-11}). 

\emph{Completely split improved relative train track (CT) maps } are topological representatives with particularly nice properties. In particular, for each edge $E$ in each irreducible stratum, the
path $f_\#(E)$ is completely split. Moreover, for each taken connecting path $\alpha$ in each zero stratum, the path $f_\#(\alpha)$ is completely split. CT maps are guaranteed to exist for \textit{rotationless} (see Definition 3.13 \cite{FH-11}) outer automorphisms, as has been shown in the following Theorem 4.28 from \cite{FH-11}.
 \begin{lemma}
  For each rotationless $\phi\in \out$ and each increasing sequence $\mathcal{F}$ of $\phi$-invariant free factor systems, there exists a CT map $f:G\rightarrow G$ that is a topological
  representative for $\phi$ and $f$ realizes $\mathcal{F}$.
 \end{lemma}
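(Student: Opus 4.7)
The statement is Feighn--Handel's Theorem 4.28 from \cite{FH-11}, so the proof I would give is really a recasting of their strategy; below I sketch the plan one would follow to reprove it from scratch.

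The plan is to start from a relative train track (RTT) representative $f_0\co G_0\to G_0$ of $\phi$ whose filtration refines the given sequence $\mathcal{F}$ of $\phi$--invariant free factor systems. Such an $f_0$ exists by \cite[Theorem 5.1.5]{BFH-00}, together with the fact that any $\phi$--invariant free factor system can be realized by a core filtration element. From here, the goal is to successively modify $f_0$, staying within the RTT class and preserving realization of $\mathcal{F}$, until every edge in every irreducible stratum maps under $f_\#$ to a completely split path and every taken connecting path in every zero stratum has the same property. I would organize the modifications by stratum type: EG strata, NEG strata, and zero strata, working from the top of the filtration downward.

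For NEG strata I would use the rotationless hypothesis to show that each NEG edge $E$ has $f(E)=E\cdot u$ for a path $u$ in lower filtration, rather than having its terminal direction only periodically fixed; this gives the required \emph{linear/fixed} edge structure that makes $f_\#(E)$ complete. For EG strata, I would use the standard tools of core subdivision, folding (of illegal turns), and collapsing of inessential connecting paths to arrange that (i) every EG edge has a complete splitting of its image and (ii) each indivisible Nielsen path of height $r$ is actually fixed, not just periodic, again using rotationlessness. For zero strata, I would ensure that each zero stratum is enveloped by an EG stratum and contains only ``taken'' connecting paths, by collapsing forests in contractible components and reparameterizing so that $f_\#$ sends taken connecting paths to completely split paths. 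Each modification must be checked to preserve the RTT axioms and the filtration by the elements of $\mathcal{F}$.

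The main obstacle, and the reason the original proof in \cite{FH-11} is long, is that these local improvements interact: folding or subdividing in one stratum can destroy a property that was just established in another, and the filtration imposed by $\mathcal{F}$ restricts which subgraphs can be collapsed. The correct way to handle this, and the step I expect to be hardest, is to set up the induction carefully---ordering moves so that each improvement only affects strata at or above the current height, while the properties established at lower heights are stable under the remaining moves. The rotationless hypothesis is what makes this induction terminate, by replacing every ``periodic'' coincidence (of directions, Nielsen paths, or vertices) with a fixed one, so that no further refinement is needed to separate periodic orbits. Once all strata satisfy their splitting requirements, the resulting $f\co G\to G$ is a CT map representing $\phi$ realizing $\mathcal{F}$, as desired.
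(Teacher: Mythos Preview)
The paper does not prove this lemma; it is quoted verbatim as Theorem~4.28 of \cite{FH-11} and used as a black box. Your proposal correctly identifies the source and gives a reasonable high-level outline of the Feighn--Handel argument, so in that sense it is consistent with the paper's ``proof'' (namely, a citation). For the purposes of matching the paper, a one-line reference to \cite{FH-11} is all that is required; your sketch goes well beyond what the paper itself supplies.
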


Feighn-Handel \cite{FH-11} showed that there exists some $k>0$ such that, given any $\phi\in\out$, $\phi^k$ is rotationless. So given any outer automorphism $\phi$, some finite power of $\phi$ has a completely split improved relative train track representative. In the body of our work, we will use  CT's defined in the  work of Feighn-Handel in \cite{FH-11}. In particular, we will use the properties given in  \cite[Definition 1.29]{HM-20} assume all exponentially growing outer automorphisms are rotationless.


 \subsection{Critical Constant}\label{sec:assump}

Let $\phi\in\out$ be exponentially growing  and $f\co G \to G$ be a CT map representing $\phi$. Below we will adapt a \emph{bounded cancellation} notion coming from  Bestvina-Feighn-Handel's bounded cancellation lemma for train-track representatives of automorphisms of $\F$ (\cite{BFH-97}); which is inspired by Cooper's same named lemma (\cite{Co-87}).  To summarize, if we have a path in $G$ which has some $r-$legal ``central''  subsegment of length greater than the
    critical constant given below, then this segment is protected by the bounded cancellation lemma and the
    length of this segment grows exponentially under iteration.

\begin{definition}
Let  $H_r$ be an exponentially growing stratum with associated Perron-Frobenius eigenvalue $\lambda_r$ and let $BCC(f)$ denote the bounded cancellation constant for $f$. Then, the number

  \[\frac{2BCC(f)}{\lambda_r-1}\]

  is called  the \emph{critical constant} for $H_r$.

\end{definition}
  It is easy to see  that for every number $C>0$ that exceeds the
  critical constant, there is some $1\geq\mu>0$ such that if $\alpha\beta\gamma$ is a concatenation of $r-$legal paths where
   $\beta $ is some $r-$legal segment of length $\geq C$, then the $r-$legal leaf segment of
   $f^k_\#(\alpha\beta\gamma)$ corresponding to $\beta$ has length  at least $\mu\lambda^k{|\beta|}_{H_r}$  (see \cite[pp 219]{BFH-97}). Here ${|\beta|}_{H_r}$ is the length of an edge-path $\beta \subset G$ is the number of the edges of $\beta$ that remain only in $H_r$. 

For the rest of this paper, let $C$ be a number larger than the maximum of all critical constants corresponding to EG strata of $f\co G\rightarrow  G$.

\subsection {Admissible subgroup systems and relative hyperbolicity} \label{sec:K}

 Given a group $G$ and a collection $\{K_\alpha\}$ of subgroups $K_{\alpha} < G$, we obtain the \emph{coned-off Cayley graph of $G$} or
  the \emph{electrocuted $G$} relative to the collection $\{K_\alpha\}$ by assigning a vertex  $v_\alpha$ for each left coset of $K_\alpha$ on the
  Cayley graph of $G$  such that each point of a left coset of 
  $K_\alpha$ is joined to (or coned-off at) $v_\alpha$ by an edge of length $1/2$. The resulting metric space is
  denoted by $(\widehat{G}, {|\cdot|}_{el})$.

 A group $G$ is said to be (weakly) relatively hyperbolic relative to the collection of subgroups
  $\{K_\alpha\}$ if $\widehat{G}$ is a $\delta-$hyperbolic metric space. 
  $G$ is said to be strongly hyperbolic relative to the collection $\{K_\alpha\}$ if the coned-off
  space $\widehat{G}$ is weakly hyperbolic relative to $\{K_\alpha\}$ and it satisfies a certain
  \textit{bounded coset penetration} property (see \cite{Fa-98}). In this paper, when we say ``relative hyperbolicity" we always mean ``strong relative hyperbolicity".

\begin{definition}

Given   $\phi\in \out$, 
let $\mathcal{K} = \{ [K_1], [K_2], \ldots , [K_p]\}$   be a subgroup system and $\mathcal{L}^{+}_{\mathcal{K}}(\phi)$ (respectively, $\mathcal{L}^{-}_{\mathcal{K}}(\phi)$) denote the collection of attracting (respectively, repelling) laminations of $\phi$ whose generic leaves are not carried by $\mathcal{K}$. Assume,
	\begin{enumerate} \label{SA}
 \item $\mathcal{L}^{+}_{\mathcal{K}}(\phi), \mathcal{L}^{-}_{\mathcal{K}}(\phi)$ are both nonempty, 
		\item $\mathcal{K}$ is a malnormal subgroup system, 
		\item $\phi(K_s) = K_s$ for each $s\in \{1, \cdots p\}$, 
		\item Let $V^+$ denote the union of attracting neighborhoods of elements of $\mathcal{L}^+_{\mathcal{K}}(\phi)$ defined by generic leaf segments of length $\geq 2C$. Define $V^-$ similarly for $\mathcal{L}^-_{\mathcal{K}}(\phi)$.  By increasing $C$ if necessary,
          \[V^+ \cap V^- = \emptyset\]
		
		\item Every conjugacy class which is not carried by $\mathcal{K}$ is weakly attracted to some element of $\mathcal{L}^+_{\mathcal{K}}(\phi)$.
	
	\end{enumerate}

We will call subgroup systems satisfying the properties above an \emph{admissible subgroup system for $\phi$} (where $\phi\in \out$ is  exponentially growing). If we are given some finitely generated group $Q = \langle \phi_1, \phi_2, \ldots, \phi_k \rangle$ we say that $\mathcal{K}$ is an \emph{admissible subgroup system for $Q$} if $\mathcal{K}$ is an admissible subgroup system for each $\phi_i$. We will simply write ``admissible subgroup system" when the context is clear. 
\end{definition}

In our previous work, we proved,

\begin{theorem}\cite{GGrelhyp}\label{main1}
Let $\phi_1, \ldots , \phi_k $ be a collection of exponentially growing outer automorphisms of $\mathbb F$ such that $\phi_i, \phi_j$ do not have a common power whenever $i\neq j$. Then the following statements are equivalent:
\begin{enumerate}
    \item There exists a subgroup system $\mathcal{K} = \{[K_1], \ldots , [K_p]\}$ which is admissible for each $\phi_i$ and for which $\mathcal{L}_\mathcal{K}^\pm(\phi_i) \cap \mathcal{L}_\mathcal{K}^\pm(\phi_j) = \emptyset$ whenever $i\neq j$. 
    \item {For every free group $Q$ generated by sufficiently high powers of $\phi_i$'s, $\F\rtimes\widehat{Q}$ is hyperbolic relative to a collection $\{K_s \rtimes \widehat{Q}_s\}_{s= 1}^p$ where $\widehat{Q}_s$ is a lift of $Q$ that fixes $K_s$. }
\end{enumerate}
\end{theorem}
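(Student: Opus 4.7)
For the easier direction (2) $\Rightarrow$ (1), I would extract the admissible data directly from the relatively hyperbolic structure. Peripheral subgroups are malnormal in a relatively hyperbolic group, so $K_s \rtimes \widehat{Q}_s$ being peripheral forces $\mathcal{K} = \{[K_s]\}$ to be a malnormal subgroup system in $\F$. Since $\widehat{Q}_s$ stabilises $K_s$, each $\phi_i$ preserves $K_s$ up to conjugacy, giving admissibility condition (3). A conjugacy class $[c]$ not carried by $\mathcal{K}$ is not contained in any peripheral subgroup, so it acts hyperbolically in $\F \rtimes \widehat{Q}$; pairing this with the train-track characterisation of exponential growth forces $[c]$ to be weakly attracted to some lamination of some $\phi_i$, giving condition (5). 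Finally, if $\Lambda \in \mathcal{L}_\mathcal{K}^+(\phi_i) \cap \mathcal{L}_\mathcal{K}^+(\phi_j)$, then $\phi_i$ and $\phi_j$ would share a common attracting fixed point on the boundary of the coned-off Cayley graph; since loxodromic elements of a hyperbolic space sharing a fixed point share a common power, this contradicts the hypothesis.

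For (1) $\Rightarrow$ (2), I would begin with a ping-pong argument. Admissibility condition (4) gives disjoint attracting/repelling neighborhoods $V^\pm$ for each $\phi_i$, and by definition of an attracting lamination each $\phi_i^{m_i}$ maps the complement of $V^-(\phi_i)$ into $V^+(\phi_i)$ for $m_i$ large. The hypothesis $\mathcal{L}_\mathcal{K}^\pm(\phi_i) \cap \mathcal{L}_\mathcal{K}^\pm(\phi_j) = \emptyset$ combined with a compactness argument in the weak topology lets us shrink $V^\pm(\phi_i)$ so that $V^\pm(\phi_i) \cap V^\pm(\phi_j) = \emptyset$ for $i \neq j$. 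Classical ping-pong then produces a threshold $M$ such that for all $m_i \geq M$, the subgroup $Q = \langle \phi_1^{m_1},\ldots,\phi_k^{m_k}\rangle$ is free of rank $k$.

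The substantive step is verifying strong relative hyperbolicity of $\F \rtimes \widehat{Q}$ relative to $\{K_s \rtimes \widehat{Q}_s\}$, for which I would appeal to a Mj--Reeves / Mj--Sardar style combination theorem for a graph of relatively hyperbolic groups. The vertex group $\F$ is hyperbolic relative to $\mathcal{K}$ by malnormality (condition (2)), and each $\phi_i$ preserves this relative structure (condition (3)). The required flaring condition is that for any line $\gamma$ in $\F$ not carried by $\mathcal{K}$, the electrocuted length of some $\phi_i^{n m_i}$-image of $\gamma$ grows exponentially in $n$: this follows from condition (5), which says such a $\gamma$ is weakly attracted to some $\Lambda^+ \in \mathcal{L}_\mathcal{K}^+(\phi_i)$, combined with the critical constant estimates of Section \ref{sec:assump} which let long enough $r$-legal subsegments grow by a uniform factor $\lambda_r > 1$ per iteration, outpacing bounded cancellation. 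Bounded coset penetration should then follow from malnormality of $\mathcal{K}$ and the weak attraction dynamics. The hard part will be establishing uniform flaring simultaneously across all generators of $Q$: a given line might grow rapidly under one $\phi_i$ but be nearly fixed by another. The hypothesis $\mathcal{L}_\mathcal{K}^\pm(\phi_i) \cap \mathcal{L}_\mathcal{K}^\pm(\phi_j) = \emptyset$ is exactly what forbids ``cancellation'' of growth when alternating generators, but quantifying this to yield a single threshold $M$ valid for every $Q$-orbit requires a delicate iterative use of the mutually disjoint attracting neighborhoods $V^\pm(\phi_i)$ and the bounded cancellation bookkeeping provided by a CT representative.
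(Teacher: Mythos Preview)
This theorem is not proved in the present paper at all: it is quoted from the authors' earlier work \cite{GGrelhyp} and used as a black box (in the proofs of Theorem~\ref{hypnas}, Proposition~\ref{nrh}, Theorem~\ref{RH}, and Theorem~\ref{pfirh}). There is therefore no proof here against which to compare your proposal.

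That said, a few remarks on your outline as a standalone attempt. In the direction $(2)\Rightarrow(1)$, your verification of admissibility condition~(5) is too weak: you argue that a conjugacy class $[c]$ not carried by $\mathcal{K}$ is attracted to some lamination of \emph{some} $\phi_i$, but admissibility of $\mathcal{K}$ for \emph{each} $\phi_i$ requires that $[c]$ be attracted to an element of $\mathcal{L}^+_\mathcal{K}(\phi_i)$ for \emph{every} $i$. Exponential growth of $[c]$ in the extension group does not by itself single out which generator is responsible. Similarly, the claim that a shared lamination forces a shared boundary fixed point on the coned-off Cayley graph needs justification: a common attracting lamination for $\phi_i$ and $\phi_j$ does not obviously translate into a common attracting point for the $\widehat{Q}$-action without further argument (compare the use of Cannon--Thurston maps and ending laminations in the proof of Theorem~\ref{hypnas}).

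For $(1)\Rightarrow(2)$, your strategy --- ping-pong for freeness, then a Mj--Reeves style combination theorem with flaring supplied by weak attraction and critical-constant estimates --- is the correct architecture and matches what is carried out in \cite{GGrelhyp}. You have also correctly identified the genuine difficulty: obtaining \emph{uniform} flaring across arbitrary words in the generators, not just powers of a single $\phi_i$. Your sketch does not yet explain how the disjointness of the $V^\pm(\phi_i)$ converts into a quantitative statement that iterating a mixed word still produces definite growth; this is where the real work lies and where a careful argument with CT splittings and legality is needed.
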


\section{Detecting Obstructions to (relative) hyperbolicity on the complex of free factors}\label{sec:3}

\textbf{Why high powers?:} Throughout this paper we have worked with a subgroup $Q$ which becomes a free group after taking high powers of outer automorphisms, which satisfy certain conditions. We wish to clarify the reasons for resorting to high powers. Firstly, there does not exist a good (for our purposes) hyperbolic metric space where hyperbolic outer automorphisms act loxodromically. The second necessity comes from a technicality, which we describe via the following example:

Let $\phi$ be the outer automorphism class of the automorphism corresponding to $ \Phi\co a\mapsto ad, b\mapsto a, c\mapsto b, d\mapsto c$ and let $\psi$ be the outer automorphism class of the automorphism corresponding to $\Psi\co a\mapsto ac, b\mapsto a, c\mapsto b, d\mapsto db$. 
Observe that both $\phi$ and $\psi$ are positive automorphisms and one can get a relative train-track map (in fact a CT map) on the standard rose with four petals.  $\phi$ has only one strata (the full graph) and $\psi$ has an NEG strata (namely $d$) sitting on top of an EG strata (namely $\{a, b, c\}$), but the NEG edge $d$ grows exponentially under iteration. In both cases, we have exactly one EG strata and the corresponding attracting lamination has a trivial nonattracting subgroup system by design.

Thus $\phi, \psi$ are both atoroidal outer automorphisms (since nonattracting subgroup system for the unique attracting lamination is trivial in both cases). Also, they have disjoint laminations since by design the free factor support of the attracting lamination of $\psi$ is $[\langle a, b, c \rangle ]$ whereas $\phi$ is fully irreducible, hence the associated lamination fills. These automorphisms are sufficiently different and have empty sinks.

If one looks at the extension group $E_Q$ associated to the group $Q=\langle \phi, \psi \rangle$, then it follows that $\Phi^{-1}\Psi$ fixes both $b, c$ and hence the group $E_Q$ has multiple copies of $\mathbb{Z} \oplus \mathbb{Z}$, so $E_Q$ cannot be hyperbolic. Now consider $H = \langle \Phi^{-1}\Psi, b, c\rangle$. If $E_Q$ were relatively hyperbolic, then $H$ would be conjugate into some peripheral subgroup (call it $P$) in this relatively hyperbolic structure. But  $b\in \Phi H \Phi^{-1}$ together with malnormality implies that $H$ and $\Phi H \Phi^{-1}$ are both contained in $P$. Hence $\Phi b \Phi^{-1} \in P \implies a \in P$.  Applying the same argument again, $\Phi a \Phi^{-1} \in P \implies ad \in P \implies d\in P$. As a result, $ \F < P$. Hence, $E_Q$  cannot be relatively hyperbolic.  

However, as we shall see later if the group $Q$, whose generators are sufficiently different with empty corresponding sinks  is generated instead by \emph{sufficiently high powers of} $\phi $ and $\psi$, its extension $E_Q$ is hyperbolic --which is something one expects. This example highlights a tractable but key issue due to which we need to pass to high powers throughout this paper.


\subsection{Sufficiently different outer automorphisms and their dynamics on $\FF$}\label{sec:suffdiff}

In this section we will let our  exponentially growing automorphisms to have fixed points, but require those fixed points to be sufficiently apart  from each other in $\FF$. We begin our  analysis on $\FF$ and  combine dynamics of automorphisms on $\FF$ with the dynamics on the  set  of laminations via weak attraction theory.
\begin{definition}

A collection of outer automorphisms will be called \emph{sufficiently different} if,
\begin{itemize}
\item No pair of the automorphisms  have a common power (hence they do not generate a virtually cyclic  (elementary) subgroup of $\out$),  and 
\item Fixed vertices (if any) of these outer automorphisms on $\FF$ are distance at least $2$ from each other in $\FF$.
\end{itemize}
\end{definition} Given that $\FF$ has infinite diameter, it is natural to expect that two randomly chosen exponentially growing outer automorphisms will be sufficiently different. Hence, we study their dynamics in relation to one another.

\begin{lemma}\label{pfifbf}
    Let $\phi_1, \ldots , \phi_k$  be a collection of exponentially growing outer automorphisms of $\mathbb F$ that are sufficiently different. Then, 
    \begin{enumerate}
        \item No generic leaf of any attracting or repelling lamination of a $\phi_i$ is carried by the nonattracting subgroup system of any attracting or repelling lamination of $\phi_j$, for $i\neq j$.
        \item  There exists some $M$ such that whenever $m_i \geq M$, for all $i$, $\langle \phi_1^{m_1}, \cdots \phi_k^{m_k}\rangle$ is a free group.
    \end{enumerate}

\end{lemma}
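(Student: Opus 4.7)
The plan is to prove part (1) directly by a dynamical argument on $\FF$, and then use part (1) as input to a Tits-style ping-pong argument in the space of lines $\mathcal{B}$ to establish part (2).

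For part (1), I argue by contradiction: suppose $\ell$ is a generic leaf of some $\Lambda \in \mathcal{L}^\pm(\phi_i)$ that is carried by $\mathcal{A}_{na}(\Lambda')$ for some $\Lambda' \in \mathcal{L}^\pm(\phi_j)$ with $i \neq j$. By Lemma~\ref{lem:na}(5) and the structure of the geometric case described just below it, $\mathcal{A}_{na}(\Lambda')$ is a free factor system $\mathcal{F}'$, possibly together with a single cyclic subgroup $[\langle c\rangle]$. Since $\ell$ is bi-recurrent and non-periodic, it cannot be an axis, so in fact $\ell$ is carried by $\mathcal{F}'$. Because $\Lambda'$ is $\phi_j$-invariant, so is $\mathcal{F}'$; rotationlessness of $\phi_j$ then makes each component of $\mathcal{F}'$ an individually $\phi_j$-fixed vertex of $\FF$. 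Since $\mathcal{F}_{supp}(\Lambda) = \mathcal{F}_{supp}(\ell) \sqsubset \mathcal{F}'$, there exist $[A] \in \mathcal{F}_{supp}(\Lambda)$ and $[K]\in \mathcal{F}'$ with $A \leq K$ up to conjugation; rotationlessness of $\phi_i$ also makes $[A]$ a $\phi_i$-fixed vertex of $\FF$. Thus $[A]$ and $[K]$ are fixed vertices of $\phi_i$ and $\phi_j$ at $\FF$-distance at most $1$, contradicting the sufficiently different hypothesis. The filling edge case $\mathcal{F}_{supp}(\Lambda) = [\F]$ is vacuous because filling lines are never carried by proper subgroup systems.

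For part (2), I run Tits' ping-pong in $\mathcal{B}$. Two preparatory observations are needed. \emph{(a)} By part (1) combined with the weak attraction theory, for every $i\neq j$, each generic leaf of any $\Lambda \in \mathcal{L}^\pm(\phi_j)$ is weakly attracted to some element of $\mathcal{L}^+(\phi_i)$ under iteration of $\phi_i$, and to some element of $\mathcal{L}^-(\phi_i)$ under iteration of $\phi_i^{-1}$. \emph{(b)} No two laminations of distinct $\phi_i, \phi_j$ share a generic leaf: shared generic leaves would force equal free factor supports, producing either a common proper fixed vertex in $\FF$ (contradicting sufficient difference) or, in the filling case, a common pair of fixed points on $\PMF$ for two fully irreducibles, hence a common power, which is forbidden. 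Fact (b) lets me choose attracting and repelling neighborhoods $V_i^\pm$ of $\mathcal{L}^\pm(\phi_i)$, defined by sufficiently long generic-leaf segments, so that the family $\{V_i^\pm\}_{i=1}^k$ is pairwise disjoint. Fact (a), promoted from pointwise attraction to uniform attraction on the compact set $\bigcup_{j\neq i}(V_j^+ \cup V_j^-)$, yields $M_i$ so that $\phi_i^{m_i}$ sends each $V_j^\pm$ ($j\neq i$) into $V_i^+$ and $\phi_i^{-m_i}$ sends them into $V_i^-$ for all $m_i \geq M_i$. Tits' ping-pong then certifies that $\langle \phi_1^{m_1}, \ldots, \phi_k^{m_k}\rangle$ is free whenever every $m_i \geq M := \max_i M_i$.

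The main technical obstacle is observation (b) in the filling subcase, where the $\FF$-fixed-vertex argument degenerates and one must invoke the north-south dynamics of fully irreducibles on $\PMF$ together with the no-common-power hypothesis in order to produce the contradiction. A secondary technicality is converting the pointwise weak attraction in (a) into a uniform mapping-into property on the $V_j^\pm$, which relies on compactness of $\mathcal{B}$ in the weak topology and finiteness of $\{\phi_i\}$.
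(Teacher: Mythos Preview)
Your argument for part (1) is essentially the paper's argument, phrased in slightly different language; it is correct.

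For part (2), the overall ping-pong strategy matches the paper's (the paper invokes \cite[Lemma 3.4.2]{BFH-00} as a packaged ping-pong, with input exactly the non-asymptoticity of one dual lamination pair per $\phi_i$). However, your observation (b) has a genuine gap in the filling subcase. You assert that if $\phi_i$ and $\phi_j$ share a generic leaf of a \emph{filling} lamination, this yields ``a common pair of fixed points on $\PMF$ for two fully irreducibles, hence a common power.'' This step is not justified: $\PMF$ is not available for $\out$, and, more substantively, a filling attracting lamination does \emph{not} force the automorphism to be fully irreducible. A reducible $\phi_i$ can fix a proper free factor $[F]$ while still having an attracting lamination whose free factor support is $[\F]$ (the free factor $[F]$ then sits in $\mathcal{A}_{na}$ of that lamination, not in its support). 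So the passage from ``common filling lamination'' to ``common power'' requires real work. The paper supplies exactly this via the subsequent Proposition~\ref{fillinglam}: it first uses asymptoticity together with the weak attraction theorem (\cite[Theorem 6.0.1]{BFH-00}) to upgrade ``asymptotic generic leaves'' to $\Gamma^+_i = \Gamma^+_j$, and then invokes Handel--Mosher's Theorem~C$'$ to produce a fully irreducible element in $\mathrm{Stab}(\Gamma^+_i)$, forcing $\langle \phi_i,\phi_j\rangle$ to be virtually cyclic. None of this is visible in your sketch.

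A secondary issue: your statement of (b), ``no two laminations share a generic leaf,'' is strictly weaker than what is needed to obtain pairwise disjoint $V_i^\pm$. Distinct generic leaves can share arbitrarily long subpaths (e.g.\ when one lamination is contained in another, or when leaves are asymptotic), so that any neighborhoods defined by long leaf segments will intersect. The paper therefore proves the stronger statement that generic leaves of the chosen pairs are \emph{not asymptotic}, and it is this hypothesis that feeds into \cite[Lemma 3.4.2]{BFH-00}. Your uniformity step (a) also presupposes that generic leaves of $\mathcal{L}^\pm(\phi_j)$ are not themselves leaves of $\mathcal{L}^\mp(\phi_i)$, which is exactly what (b) is supposed to supply; so the gap in (b) propagates.
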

\begin{proof}
    If no $\phi_i$ fixes any vertex in the free factor complex, then they are all fully irreducible and the attracting and repelling laminations associated to them  fill $\F$ (See Lemma \ref{fill} and the paragraph afterwards). The result then follows from an induction argument applied to  \cite[Proposition 3.7]{BFH-97}.

    Suppose that some $\phi_i$ fixes some vertex in $\FF$.
    Let $\Lambda^+_j$ be an attracting lamination of $\phi_j$, $j\neq i$. If $\mathcal{F}_{supp}(\Lambda^+_j)$ fills then we are done as in this case no generic leaf of $\Lambda^+_j$ can be carried by the nonattracting subgroup system of any attracting or repelling lamination of $\phi_i$. 
    Suppose then that $\mathcal{F}_{supp}(\Lambda^+_j)$ is proper and $\Lambda^+_j$ is carried by $\mathcal{A}_{na}(\Lambda^+_i)$ for  some attracting lamination $\Lambda^+_i$ of $\phi_i$. This would imply that $\mathcal{F}_{supp}(\Lambda^+_j)$ is carried by a free factor component $[B]\in\mathcal{A}_{na}(\Lambda^+_i)$. This violates the distance requirement of being sufficiently different. So the result follows.
    
    To prove (2), we claim that  there exists an attracting (resp. repelling) lamination $\Lambda^+_i$ (resp. $\Lambda^-_i$) of $\phi_i$ and an attracting (resp. repelling) lamination $\Lambda^+_j$ (resp. $\Lambda^-_j$) of $\phi_j$ such that no generic leaf of $\Lambda^+_i$ or $\Lambda^-_i$  is asymptotic to any generic leaf of $\Lambda^+_j$ or $\Lambda^-_j$. Assuming the claim to be true we are done by  Corollary 2.17 \cite{HM-20} and applying induction to \cite[Lemma 3.4.2]{BFH-00} (Detecting $\F_2$ via laminations).
    
    To prove the claim, suppose first that there exists some attracting lamination $\Lambda$ of $\phi_i$ such that $\mathcal{F}_{supp}(\Lambda)$ is proper. If some generic leaf of $\Lambda$ is asymptotic to some generic leaf of an attracting or repelling lamination of $\phi_j$ then birecurrency of generic leaves implies that both ends of a leaf  would have the same height, and as a result these lines would have the same free factor support. Hence,  $\mathcal{F}_{supp}(\Lambda)$ will be invariant under both $\phi_i$ and $\phi_j$, thus violating the distance requirement in $\FF$ between their fixed vertices. So the claim is proved in the case of existence of any attracting or repelling lamination for either $\phi_i$ or $\phi_j$ with a proper free factor support.  Suppose then neither $\phi_i$ nor  $\phi_j$ have an attracting lamination with a proper free factor support. This implies that there exist unique attracting laminations $\Lambda^+_i, \Lambda^+_j$ for $\phi_i, \phi_j$ respectively that fill $\F$. The filling condition implies that  $\Lambda^+_i, \Lambda^+_j$ are necessarily topmost. If $\Lambda^+_i, \Lambda^+_j$ have asymptotic generic leaves, say $\ell_i, \ell_j$ respectively then $\ell_i$ cannot be carried by the nonattracting subgroup system of $\Lambda^+_j$. Also, asymptoticity will imply that $\ell_i$ cannot be weakly attracted to $\Lambda^-_j$. Hence by the weak attraction theorem (\cite[Theorem 6.0.1]{BFH-00}) $\ell_i$ must be a generic leaf of $\Lambda^+_j$ . But by being a generic leaf, closure of $\ell_i$ is all of $\Lambda^+_j$ (and it is also all of  $\Lambda^+_i$ by the nature of $\ell_i$). Hence $\Lambda^+_i = \Lambda^+_j$. 
    As a result, in this case the following proposition completes the proof of our claim by giving us a contradiction with the hypothesis that no pair $\phi_i, \phi_j$  have common power.
\end{proof}
Following is a technical result which identifies dynamical conditions for detecting when a subgroup of $\out$ is virtually cyclic.
\begin{proposition}\label{fillinglam}
    Let $\phi_1, \ldots , \phi_k$  be  a collection of exponentially growing automorphisms of $\out$  for which following conditions are satisfied:
    \begin{enumerate}
        \item No pair $\phi_i, \phi_j$ with $i\neq j$ have a common invariant free factor.
        \item There exists a common attracting lamination $\Lambda^+$ such that $\Lambda^+$ fills.
    \end{enumerate}
    Then all $\phi_i$ are fully irreducibles with common powers.
\end{proposition}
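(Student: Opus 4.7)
The plan is to show that the $\out$-stabilizer of $\Gamma^+$ is virtually cyclic, forcing the $\phi_i$ to be pairwise commensurable, and then to use condition~(1) together with rotationlessness to upgrade commensurability to full irreducibility.

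Each $\phi_i$ preserves $\Gamma^+$, so the subgroup $\langle\phi_1,\ldots,\phi_k\rangle$ is contained in the $\out$-stabilizer $\mathrm{Stab}(\Gamma^+)$. I would first argue that this stabilizer is virtually cyclic: when $\Gamma^+$ is arational in the sense of Bestvina--Reynolds, this is a consequence of the identification of $\partial\FF$ with projective classes of arational laminations combined with the virtual cyclicity of $\out$-stabilizers of points in $\partial\FF$ (Kapovich--Lustig, Uyanik); in the general filling case one argues via the train-track machinery that the bi-recurrent generic leaves of $\Gamma^+$ pin down the stretching factor and axis of any stabilizing element up to bounded ambiguity. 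Either way, for every $i\neq j$ there exist positive integers $n_i,n_j$ with $\phi_i^{n_i}=\phi_j^{n_j}$, which already yields the common-powers half of the conclusion.

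To upgrade each $\phi_i$ to a fully irreducible, I argue by contradiction: if some $\phi_i$ admitted a proper invariant free factor $[F]$, then commensurability would give $\phi_j^{n_j}([F])=\phi_i^{n_i}([F])=[F]$, so $[F]$ would be $\phi_j$-periodic. Since $\phi_j$ is rotationless, periodic free factor systems are invariant (Feighn--Handel, 2011), so $[F]$ would be a common invariant free factor of $\phi_i$ and $\phi_j$, contradicting condition~(1). Hence every $\phi_i$ is fully irreducible.

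The main obstacle is justifying the virtual cyclicity of $\mathrm{Stab}_{\out}(\Gamma^+)$ when $\Gamma^+$ is filling but not arational, since filling is strictly weaker than arational --- a reducible rotationless exponentially growing outer automorphism can have a filling attracting lamination containing singular periodic leaves carried by its invariant free factors. One way to sidestep this is to first use condition~(1) directly: any $\phi_j$ stabilizing $\Gamma^+$ must permute its intrinsic singular leaves and hence preserve the proper free factors supporting them, so if $\phi_i$ were reducible then $\phi_j$ would be forced to share one of $\phi_i$'s invariant free factors, contradicting condition~(1). This reduces the problem to the arational case, where the standard boundary-stabilizer machinery applies.
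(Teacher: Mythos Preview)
Your proposal has a genuine gap at its core: virtual cyclicity of $\mathrm{Stab}_{\out}(\Gamma^+)$ for a merely \emph{filling} attracting lamination is not an available result, and your ``sidestep'' does not close this. The notion of ``intrinsic singular leaves'' of $\Gamma^+$ does not pin down the invariant free factors of $\phi_i$. Concretely, a reducible rotationless $\phi_i$ can have a single EG stratum $H_r$ at the top of a CT representative producing a filling $\Gamma^+$, with everything below NEG; then $\Gamma^+$ has no proper sublaminations and no canonically identifiable leaves carried by the invariant free factor $[\pi_1(G_{r-1})]$, so nothing about $\Gamma^+$ alone forces another $\phi_j\in\mathrm{Stab}(\Gamma^+)$ to fix $[\pi_1(G_{r-1})]$. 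Your reduction to the arational case therefore fails. (Your second step, upgrading commensurability to full irreducibility via rotationlessness, is fine---but it never gets off the ground.)

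The paper avoids analyzing $\mathrm{Stab}(\Gamma^+)$ abstractly. If some $\phi_i$ is already fully irreducible, then $\Gamma^+$ is its lamination and \cite[Theorem~2.14]{BFH-97} gives virtual cyclicity of the stabilizer directly. If instead all $\phi_i$ are reducible, condition~(1) says the subgroup $\mathcal{H}=\langle\phi_1,\ldots,\phi_k\rangle$ has no invariant proper free factor system, and Handel--Mosher's Theorem~C$'$ \cite{HM-20} then produces a fully irreducible $\xi\in\mathcal{H}\subset\mathrm{Stab}(\Gamma^+)$. The weak attraction theorem is then used to prove that $\Gamma^+$ actually equals $\Lambda^+_\xi$ or $\Lambda^-_\xi$: a generic leaf $\gamma^+$ of $\Gamma^+$ fills, so it is either attracted to $\Lambda^-_\xi$ or is generic for $\Lambda^+_\xi$; the former is impossible since $\xi$ stabilizes $\Gamma^+$ and iterates of $\gamma^+$ remain birecurrent in $\Gamma^+$. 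This reduces to the first case and gives the contradiction. The key ingredient you are missing is invoking the Handel--Mosher subgroup theorem to \emph{manufacture} a fully irreducible inside $\mathrm{Stab}(\Gamma^+)$, thereby forcing $\Gamma^+$ to be the lamination of a fully irreducible rather than attempting to prove a general stabilizer result.
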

\begin{proof}
    We will prove the theorem for $k=2$ to avoid extra notation as the general case is the same.
    Consider the group $\mathcal{H} = \langle \phi_1, \phi_2\rangle < \out$. Then $\mathcal{H} < \text{Stab}(\Lambda^+)$. If either $\phi_1$ or $\phi_2$ is fully irreducible, then $ \text{Stab}(\Lambda^+)$ is virtually cyclic by \cite[Theorem 2.14]{BFH-97}. So $\mathcal{H}$ must be virtually cyclic and  hence $\phi_1, \phi_2$ are both fully irreducible and must have common powers. 
    
    Suppose then that both $\phi_1, \phi_2$ are reducible. Then the group $\mathcal{H}$ is fully irreducible, since $\phi_1, \phi_2$ do not have any common invariant free factor. By applying \cite[Theorem C\' - page 2]{HM-20} with $\mathcal{F} = \emptyset$, we see that there exists a fully irreducible $\xi\in\mathcal{H} < \text{Stab}(\Lambda^+)$. Let $\Lambda^+_\xi, \Lambda^-_\xi$ be the unique attracting and repelling laminations of $\xi$ and $\ell^+$ be a generic leaf of $\Lambda^+_\xi$. Then $\Lambda^+_\xi$ and $\Lambda^-_\xi$ both fill $\F$. If $\Lambda^+ = \Lambda^+_\xi $ or $\Lambda^-_\xi$, then we are reduced to the earlier case. So suppose $\Lambda^+$ is distinct from $\Lambda^+_\xi$ and $\Lambda^-_\xi$. Choose a train-track map $f: G\to G$ for $\xi$. Let $V^+_\xi$ be some attracting neighborhood of $\Lambda^+_\xi$, which is described by some sufficiently long generic leaf segment of $\ell^+$. Similarly choose a repelling neighborhood  $V^-_\xi$. Now, by our assumption, if $\gamma^+$ is a generic leaf of $\Lambda^+$, then $\gamma^+$ fills, and it is not a generic leaf of $\Lambda^-_\xi$. Then by the weak attraction theorem Corollary 2.17 \cite{HM-20} together with \cite[Theorem G]{HM-20}, there exists some $k>0$ such that $f^k_\#(\gamma^+) \in V^+_\xi$ . Since $\xi\in \text{Stab}(\Lambda^+)$ and $f^k_\#(\gamma^+)$ is always birecurrent,  $f^k_\#(\gamma^+)\in \Lambda^+$ is a birecurrent leaf. Hence we conclude that some birecurrent leaf of $\Lambda^+$ contains the subpath of $\ell^+$ which described $V^+_\xi$. As $\gamma^+$ was chosen to be generic leaf, we have $\overline{\gamma^+} = \Lambda^+$. Hence the chosen subpath of $\ell^+$ is also a subpath of $\gamma^+$. Since this subpath was chosen arbitrarily, we deduce that $\ell^+$ is in the closure of generic leaves of $\Lambda^+$, implying that $\Lambda^+_\xi\subset \Lambda^+$. 
    
    Now consider the dual lamination $\Lambda^-$ of $\Lambda^+$. Since $\ell^+$ is a filling line, by the weak attraction theorem of \cite[Theorem 6.0.1]{BFH-00} either $\ell^+$ is weakly attracted to $\Lambda^-$ or $\ell^+$ is a generic leaf of $\Lambda^+$. Since being weakly attracted to a lamination is an open condition and every subpath of $\ell^+$ is a subpath of some generic leaf of $\Lambda^+$, the first option is not possible. Hence $\ell^+$ must be a generic leaf of $\Lambda^+$, implying that $\Lambda^+ = \Lambda^+_\xi$, which is a contradiction to our assumption that they are distinct. Therefore $\Lambda^+ = \Lambda^+_\xi$ or $\Lambda^+ = \Lambda^-_\xi$ and we are done by the first case.
\end{proof}

\begin{remark}
  When $\Lambda^\pm_1$ and $\Lambda^\pm_2$ are both geometric and $\phi_1, \phi_2$ satisfy the hypothesis of Proposition \ref{pfifbf}, it may seem that no additional restriction is placed on the conjugacy class representing the unique indivisible Nielsen path associated to the laminations.  However, we shall see in Lemma \ref{rhconditions} that the hypothesis that the fixed vertex sets of $\phi_1$ and $\phi_2$ are of distance at least $2$ in $\FF$ puts some very strong restrictions on them.
\end{remark}

\subsection {Cusp preserving relative hyperbolicity} \label{sec:cusprelhyp}

Given a short exact sequence of finitely generated groups $1 \to \F \to E \to Q \to 1$, we say that $E$ has a \textbf{\emph{cusp-preserving}} relatively hyperbolic structure if there exists a collection of finitely generated subgroups $\{H_i\}$ of $\F$ such that $Q$ preserves conjugacy class of each $H_i$ and $E$ is (strongly) hyperbolic relative to the collection $\{N_E(H_i)\}$ (where $N_E(H_i)$ is the normalizer of $H_i$ in $E$) (see \cite[Theorem 5.1]{GGrelhyp} for motivation behind this property).

Recall that  an outer automorphism $\phi$ is said to be \emph{polynomially growing} if for each automorphism $\Phi\in \Aut(\F)$ representing $\phi$ and each $c\in \F$ the cyclically reduced word length of $\phi^i(c)$ is bounded above by a polynomial of $i$.

A subgroup $Q< \out$ is called \emph{upper polynomially growing (UPG)} if each $\phi\in Q$ is polynomially growing and $Q$ has unipotent image in $GL_n(\mathbb Z)$. If $Q<\out$ is UPG we will call every $\phi\in Q$ also UPG.

Recall that if $\Lambda_1, \Lambda_2$ are attracting laminations for $\phi$ with nonattracting subgroup systems $\mathcal{K}_1, \mathcal{K}_2$ respectively, then we define the \emph{meet} (Lemma \ref{fill}) $\mathcal{K}_1\bigwedge\mathcal{K}_2$  as follows:
\[ \mathcal{K}_1 \bigwedge \mathcal{K}_2  = \{ [A \cap B^w] : [A]\in \mathcal{K}_1, [B] \in \mathcal{K}_2, w\in \F, A \cap B^w \neq \{id\} \}\]
where $B^w$ denotes the group $w^{-1} B w$, with  $w\in \F$.

\begin{definition}(Sink of an automorphism) \cite[Section 4.3]{GGrelhyp}\label{sink} Given any exponentially growing outer automorphism $\phi$ of $\F$, consider the full list of attracting laminations $\{\Lambda_q\}_{q = 1}^r$ and let
\[\mathcal{K}^*_\phi = \mathcal{K}_1\bigwedge\mathcal{K}_2\bigwedge\ldots\bigwedge \mathcal{K}_r, \,\,\text{where} \,\,\mathcal{K}_q = \mathcal{A}_{na}(\Lambda_q) \,\,\text{for} \,\,q = 1, \ldots,  r.\] If $\phi$ is not exponentially growing define $\mathcal{K}^*_\phi = \{[\F]\}$.  We shall call $\mathcal{K}_\phi^*$  the \emph{nonattracting sink} of $\phi$.

 The nonattracting sink $\mathcal{K}^*_\phi$ of $\phi$ is thus a $\phi-$invariant malnormal subgroup system which carries all conjugacy classes which are either fixed or grow polynomially under iteration by $\phi$. Thus, by construction, the nonattracting sink is designed to trap obstructions to hyperbolicity properties of the extension groups. 
We have,
\end{definition}
\begin{lemma}\label{sinkprop}
    Given an exponentially growing $\phi\in\out$ and its nonattracting sink $\mathcal{K}^*_\phi$, we have
    \begin{enumerate}
        \item The nonattracting sink of $\phi$ is an admissible subgroup system for $\phi$. 
        \item A conjugacy class is carried by $\mathcal{K}^*_\phi$ if and only if it is not weakly attracted to any attracting lamination of $\phi$. 
        \item A conjugacy class is carried by $\mathcal{K}^*_\phi$ if and only if it is either fixed by $\phi$ or grows polynomially under iteration by $\phi$.
        \item $K^*_\phi = \mathcal{F}\cup \{[F_1], \ldots, [F_p]\}$, for some free factor system $\mathcal{F}$ and finite rank $\phi-$invariant subgroups $F_i\in\F$, each of which is a geometric component of the nonattracting subgroup system of some geometric lamination of $\phi$. 
        \item If $\mathcal{K}^*_\phi$ has only infinite cyclic components, then no conjugacy class has nontrivial polynomial growth under iteration by $\phi$. 
    \end{enumerate}
\end{lemma}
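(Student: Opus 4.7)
The plan is to address the five claims in an order that exposes their dependencies: establish the ``carried iff not attracted'' characterization (2) first, from which the admissibility (1) and the growth dichotomy (3) follow, then deduce the structural form (4), and finally derive (5) directly from (4).

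For (2), I would apply Lemma~\ref{NAS}(2) to each attracting lamination $\Lambda_q$ separately: a conjugacy class $[c]$ is not weakly attracted to $\Lambda_q$ if and only if it is carried by $\mathcal{K}_q = \mathcal{A}_{na}(\Lambda_q)$. Unwinding the definition of the meet, $[c]$ is carried by $\mathcal{K}^*_\phi = \bigwedge_q \mathcal{K}_q$ iff it is carried by every $\mathcal{K}_q$, which yields (2). For (1), I would verify the five admissibility axioms of Section~\ref{SA}: malnormality of the meet follows inductively from malnormality of each $\mathcal{K}_q$ (Lemma~\ref{NAS}(1)) together with the standard preservation of malnormality under meets; $\phi$--invariance of each component of $\mathcal{K}^*_\phi$ comes from the rotationless hypothesis, which fixes each lamination and hence each component of each $\mathcal{K}_q$, and these invariances descend to intersections; nonemptiness of $\mathcal{L}^{\pm}_{\mathcal{K}^*_\phi}(\phi)$ holds since no generic leaf of any $\Lambda_q$ is carried by $\mathcal{K}_q$, hence not by the meet; disjointness of attracting/repelling neighborhoods for sufficiently long leaf segments is the generic non-asymptoticity fact already recorded in the preliminaries; and the final attraction axiom is exactly the forward direction of (2). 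For (3), I would appeal to the standard dichotomy for rotationless $\phi$ that every conjugacy class either grows exponentially, and is then weakly attracted to some element of $\mathcal{L}^+(\phi)$, or is fixed/polynomially growing, and is then attracted to none; combining with (2) proves the claim.

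For (4), by Lemma~\ref{NAS}(5) and the structural paragraph immediately following it, each $\mathcal{K}_q$ is either a free factor system (if $\Lambda_q$ is nongeometric) or has the form $\mathcal{F}_q \cup \{[\langle c_q\rangle]\}$ (if $\Lambda_q$ is geometric), where $c_q$ is the conjugacy class represented by the unique closed iNP of the associated EG stratum. I would then compute the meet by a pairwise case analysis on components: free factor meet free factor is a free factor; free factor meet cyclic is either trivial or the cyclic subgroup itself (up to conjugation); cyclic meet cyclic is either trivial or cyclic. Collecting the free factor components into a free factor system $\mathcal{F}$ and the surviving cyclic components as $\langle a_1\rangle,\ldots,\langle a_p\rangle$ yields the claimed form. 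Each $a_i$ is a power of some $c_q$ up to conjugacy, and since $\phi$ is rotationless the iNP representing $c_q$ is fixed by $f_\#$, so $[c_q]$ and hence $[a_i]$ is fixed by $\phi$. Finally, (5) is immediate: if every component of $\mathcal{K}^*_\phi$ is infinite cyclic, then any carried conjugacy class is a power of some fixed $[a_i]$ and therefore itself fixed; by (3) every polynomially or fixed class is carried, so no class can have nontrivial (degree $\geq 1$) polynomial growth.

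The main technical obstacle I expect is (4): one must ensure the meet calculation is exhaustive and genuinely produces components of the claimed form, with each cyclic piece traceable back to an iNP and thereby automatically $\phi$--fixed. This step relies critically on the geometric-model analysis underpinning Lemma~\ref{NAS}(5); once that input is invoked, the rest of the statement unwinds mechanically.
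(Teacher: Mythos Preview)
Your proposal is correct and follows essentially the same mathematical route as the paper; the only real difference is that the paper's proof of (1), (2), and (4) consists of bare citations to \cite{GGrelhyp} (Corollary~4.13 and Proposition~4.2), whereas you have unpacked what those citations actually contain---the meet computation, the verification of the admissibility axioms, and the structural decomposition into free-factor and iNP-cyclic pieces. Your argument for (5) is also the same as the paper's: a nontrivially polynomially growing class would have to be carried by the free-factor part $\mathcal{F}$ of the sink (the cyclic pieces being $\phi$--fixed), forcing $\mathcal{F}$ to contain a component of rank $\geq 2$.
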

\begin{proof}
(1) and (2) follows from \cite[Corollary 4.13]{GGrelhyp}. 
(3) follows from the fact that any conjugacy class grows exponentially under iteration by $\phi$ if and only if it is weakly attracted to some attracting lamination of $\phi$ under iteration. 

(4) follows from the proof of part (1) of \cite[Proposition 4.12]{GGrelhyp}. 

To prove (5) note that if $[w]$ is a conjugacy class that grows polynomially under iteration by $\phi$, then there must be at  least a rank $2$ subgroup which carries both $[w]$ and a twistor to generate at least linear order growth. This proves that if $\mathcal{K}^*_\phi$ has only infinite cyclic components, then  no conjugacy class can have nontrivial polynomial growth. 
\end{proof}

\textbf{Remark:} We point out to the reader that if $K^*_\phi = \mathcal{F}\cup \{[F_1], \ldots, [F_p]\}$ and then each nontrivial $[F_i]$ arises due to presence of a geometric strata and restriction of $\phi$ to each nontrivial $F_i$ is polynomially growing. Moreover, each nontrivial $[F_i]$ carries the conjugacy of the unique closed indivisible Nielsen path associated to a geometric strata. This is an essential ingredient in the proof of \cite[Proposition 4.12]{GGrelhyp}

We now proceed with the following proposition which identifies a sufficient condition for a collection of outer automorphisms to not have a common admissible subgroup system. 
\begin{proposition}\label{nrh}
     Let $\phi_1, \cdots,  \phi_k$ be a collection of exponentially growing automorphisms so  that no pair of elements have a common power. Let $Q$ be any free group generated by some powers of $\phi_i$'s. Let $\mathcal{K}^*_i$ be the nonattracting sink of $\phi_i$ and assume that the following conditions hold:
     \begin{enumerate}
         \item $\mathcal{K}_i^*\neq \emptyset$ for some $i\in \{1,\cdots, k\}$.
         \item If $\mathcal{K}^*_i\neq \emptyset$, then $\mathcal{K}^*_i$ is not carried by $\mathcal{A}_{na}(\Lambda^+_j)$ for any attracting lamination $\Lambda^+_j$ of $\phi_j$ for some $j \neq i$.
     \end{enumerate}
      Then we have the following: 
      \begin{enumerate}[\label = (a)]
          \item There is no admissible subgroup system for $Q$.  
          \item The extension group $\F \rtimes \widehat{Q}$ is not hyperbolic relative to any collection of subgroups so that the cusps are preserved.
      \end{enumerate}
\end{proposition}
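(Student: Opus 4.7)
I plan to split the argument along the statement's two conclusions.

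\emph{Part (a).} I argue by contradiction. Suppose $\mathcal{K}=\{[K_1],\dots,[K_p]\}$ is a subgroup system admissible for every element of $Q$, hence for every $\phi_i$. Fix the index $i$ with $\mathcal{K}^*_i\neq\emptyset$ and let $j\neq i$ be the index supplied by hypothesis (2). The goal is to show that $\mathcal{L}^+_\mathcal{K}(\phi_j)=\emptyset$, contradicting axiom (1) in the definition of admissible subgroup system.

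The first step is to prove that $\mathcal{K}$ carries every conjugacy class carried by $\mathcal{K}^*_i$. For such a class $[c]$, Lemma \ref{sinkprop}(2) says $[c]$ is not weakly attracted to any attracting lamination of $\phi_i$, in particular not to any element of $\mathcal{L}^+_\mathcal{K}(\phi_i)$; admissibility axiom (5) for $\phi_i$ then forces $[c]$ to be carried by $\mathcal{K}$. In the second step, let $\Gamma^+_j$ be an arbitrary attracting lamination of $\phi_j$. Hypothesis (2) together with Lemma \ref{lem:na}(2) produces a class $[c]$ carried by $\mathcal{K}^*_i$ that is weakly attracted to $\Gamma^+_j$ under $\phi_j$. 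By the first step $[c]$ is carried by $\mathcal{K}$, and axiom (3) (invariance of each $K_s$ under $\phi_j$) guarantees that every iterate $\phi_j^n([c])$ is carried by $\mathcal{K}$. Since $\mathcal{K}$ is malnormal, the set of lines and circuits it carries is closed in the weak topology (\cite[Fact~1.8]{HM-20}), so the weak limit of $\phi_j^n([c])$---which is a generic leaf of $\Gamma^+_j$---is itself carried by $\mathcal{K}$. Hence $\Gamma^+_j\notin\mathcal{L}^+_\mathcal{K}(\phi_j)$; since $\Gamma^+_j$ was arbitrary, $\mathcal{L}^+_\mathcal{K}(\phi_j)=\emptyset$, the desired contradiction.

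\emph{Part (b).} I plan to reduce (b) to (a) by extracting an admissible subgroup system from any putative cusp-preserving relatively hyperbolic structure on $E_Q=\F\rtimes\widehat{Q}$. Assume $\{H_s\}$ is such a peripheral family; set $K_s:=H_s\cap\F$, and define $\mathcal{K}:=\{[K_s]:K_s\neq\{\mathrm{id}\}\}$. The cusp-preserving hypothesis immediately gives $\phi(K_s)=K_s$ up to conjugacy for every $\phi\in Q$, verifying axiom (3). Axiom (2) (malnormality of $\mathcal{K}$ in $\F$) is inherited from the almost malnormality of peripheral subgroups in a relatively hyperbolic group intersected with the normal subgroup $\F$, and axiom (4) is arranged by enlarging the critical constant $C$.

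The main obstacle is verifying the attraction axiom (5). My plan is as follows: any conjugacy class $[c]$ not carried by $\mathcal{K}$ is represented by an element of $\F$ that does not conjugate into any peripheral, hence is loxodromic in the relatively hyperbolic $E_Q$. Combining this loxodromicity with the dynamics of $\phi_i$ on $\F$ via a CT representative should show that $[c]$ is weakly attracted to some attracting lamination of $\phi_i$; cusp-preservation together with the closedness argument from part (a) (applied in reverse to rule out the limit leaf sitting inside $\mathcal{K}$) forces the limiting generic leaf to avoid $\mathcal{K}$, so the lamination lies in $\mathcal{L}^+_\mathcal{K}(\phi_i)$. This yields axiom (5), and axiom (1) follows because each exponentially growing $\phi_i$ has at least one attracting lamination, and applying axiom (5) to any conjugacy class not carried by $\mathcal{K}$ certifies that some such lamination survives in $\mathcal{L}^+_\mathcal{K}(\phi_i)$. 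With admissibility of $\mathcal{K}$ thereby established, part (a) immediately yields a contradiction, completing the proof of (b).
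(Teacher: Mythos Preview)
Your argument for part (a) is correct and follows the same route as the paper, but with considerably more detail. The paper's proof compresses your steps 3--6 into the bare assertion that ``condition (2) implies that $\mathcal{K}^*_1$ cannot be carried by any admissible subgroup system for $\phi_j$''; you supply the mechanism (iterates of a class in $\mathcal{K}^*_i$ carried by $\mathcal{K}$ limit onto a generic leaf of each $\Gamma^+_j$, forcing $\mathcal{L}^+_\mathcal{K}(\phi_j)=\emptyset$), landing on the same contradiction with axiom (1).

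Part (b), however, has a real gap. You attempt to construct an admissible system directly as $\mathcal{K}=\{[H_s\cap\F]\}$ and verify the axioms by hand, but your treatment of axiom (5) is incomplete. The sentence ``combining this loxodromicity with the dynamics of $\phi_i$ \ldots\ should show that $[c]$ is weakly attracted to some attracting lamination of $\phi_i$'' is not an argument: loxodromicity of $c$ in $E_Q$ does not by itself produce weak attraction to a lamination. The intended contrapositive (no attraction $\Rightarrow$ $[c]\in\mathcal{K}^*_i$ $\Rightarrow$ polynomial growth $\Rightarrow$ $c$ parabolic) requires justifying why polynomial growth under $\phi_i$ forces $c$ into a peripheral, which is nontrivial when the growth is nonconstant. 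Worse, your second step---that ``the closedness argument from part (a) applied in reverse'' forces the limiting generic leaf to avoid $\mathcal{K}$---does not work as stated: the closedness argument in (a) shows that if a $\mathcal{K}$-carried class is attracted to $\Gamma^+_j$ then the generic leaf lies in $\mathcal{K}$, and there is no evident reversal of this. Even granting that $[c]$ is attracted to \emph{some} lamination of $\phi_i$, you have not shown that at least one such lamination lies in $\mathcal{L}^+_\mathcal{K}(\phi_i)$.

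The paper avoids all of this by invoking Theorem~\ref{main1} (the main theorem of the authors' earlier paper \cite{GGrelhyp}) as a black box: it first observes that any $[g]\in\mathcal{K}^*_i$ has at most polynomial growth under $\phi_i$ and hence must lie in a peripheral, so the peripheral family is nonempty; then the implication (2)$\Rightarrow$(1) of Theorem~\ref{main1} converts the cusp-preserving relatively hyperbolic structure directly into an admissible subgroup system for $Q$, contradicting (a). What you are trying to do is re-prove that implication on the fly, and that is a substantial piece of \cite{GGrelhyp}. You should either cite Theorem~\ref{main1} as the paper does, or supply a genuinely complete verification of axiom (5).
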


\begin{proof}
    Suppose that $(1)$ and $(2)$ hold. Without loss of generality assume that $\mathcal{K}^*_1 \neq \emptyset$. Using item $(3)$ of Lemma \ref{NAS}, condition $(2)$ implies that $\mathcal{K}^*_1$ cannot be carried by any admissible subgroup system for $\phi_j$, for some $j\neq 1$. If $\mathcal{K}$ were an admissible subgroup system for $Q$, then every conjugacy class not carried by $\mathcal{K}$ would  get weakly attracted to some element of $\mathcal{L}^+_\mathcal{K}(\phi_s)$ (hence would grow exponentially under iteration by $\phi_s$), for each $1\leq s\leq k$. Hence conclusion $(a)$ holds.
    
    Now suppose that $\F \rtimes \widehat{Q}$ is hyperbolic relative to some collection of peripheral subgroups. Let $[g]$ be carried by $\mathcal{K}^*_i$. Then $[g]$ is not weakly attracted to any attracting lamination of $\phi_i$ and hence by definition either $[g]$ is fixed by $\phi_i$ or $|{\phi_i}^n_\#([g])|$ grows polynomially. So $g$ must be conjugate to some element in a peripheral subgroup. As a consequence, the peripheral subgroup system in Theorem \ref{main1} is nonempty. Moreover, whenever $\F\rtimes \widehat{Q}$ preserves cusps, the same theorem ensures the existence of an admissible subgroup system, contradicting conclusion (a) above. This completes proof of (b). 
\end{proof}

\begin{corollary}\label{rank1ff}
    Let $\phi_1, \cdots,  \phi_k$ be a collection of exponentially growing and sufficiently different outer automorphisms of $\F$. Then if some  $\phi_i$ fixes the conjugacy class of some rank $1$ free factor $[\langle g \rangle]$, then $[g]$  can not be carried by a free factor component of the nonattracting subgroup system of any attracting lamination of $\phi_j$,  for any $ \,j \neq i $. 

    Moreover, if the distance between the fixed points sets in $\FF$ of any distinct pair of $\phi_i, \phi_j$ is at least $5$, then $[g]$ is weakly attracted to every attracting lamination of $\phi_j$ for every $j\neq i$. 
\end{corollary}
\begin{proof}
    For concreteness, suppose $\phi_1$ fixes the conjugacy class of a rank $1$ free factor $[\langle g \rangle]$. Let $\Lambda^+_j$ be any attracting lamination of $\phi_j$, $1\neq j$. If $[g]$ is carried by $[B]\in \mathcal{A}_{na}(\Lambda^+_j) = \mathcal{F}_j \cup [F_{m_j}]$, then either $[B]$ is a free factor component of $ \mathcal{F}_j$ or $[B] = [F_{m_j}]$. The former  case directly violates the sufficiently different hypothesis. 
    
    To complete the proof, it remains to show that if the distance between the fixed points sets in $\FF$ of any distinct pair of $\phi_i, \phi_j$ is at least $5$, then $[g]$ is weakly attracted to every attracting lamination of $\phi_j$ for every $j\neq i$. To see this, observe that if $[g]$ is carried by $[F_{m_j}]$, then the free factor conjugacy class $[\langle g \rangle]$ is disjoint from $\mathcal{F}_{supp}(\Lambda^+_j)$ and hence their distance in $\FF$ is at most $4$ - violating our hypothesis.  
\end{proof}

In Proposition \ref{nrh} we identified sufficient conditions for a collection of exponentially growing outer automorphisms to not have a common admissible subgroup system. In the following lemma, we have a hypothesis which negates that sufficient condition and examine the impact of our hypothesis on the nonattracting sinks for a collection of sufficiently different outer automorphisms. As it turns out, the nonattracting sinks in such cases will have a very rigid structure.

For an automorphism $\phi$, closed Nielsen paths are circuits which are in bijective  correspondence with  conjugacy classes which are fixed by  $\phi$. There are three primary cases in our analysis where Nielsen paths appear. First one (and the most important for us) as an indivisible Nielsen path associated to a geometric EG strata. The second one is from the ``twistor" in Dehn twist parts. The third case is when there is an isolated invariant rank-1 free factor, not covered by the previous two cases. All Nielsen paths are carried by the nonattracting sink. In case of absence of a free factor component  in the nonattracting sink, which is essentially what the hypothesis of Lemma \ref{rhconditions} ensures, the second and third cases of Nielsen paths are ruled out.

\begin{lemma}\label{rhconditions}
      Let $\phi_1, \cdots,  \phi_k$ be a collection of sufficiently different  exponentially growing outer automorphisms of $\out$. Let $\mathcal{K}^*_i$ be the nonattracting sink of $\phi_i$ for each $i$ and suppose that the following condition holds:
      \begin{itemize}
        \item  If $\mathcal{K}^*_i\neq \emptyset$ then for each $j\neq i$, there exists an attracting lamination $\Lambda^+_j$ of $\phi_j$ such that $\mathcal{K}^*_i$ is carried by $\mathcal{A}_{na}(\Lambda^+_j)$.
       \end{itemize}
      Then the following are true :
      \begin{enumerate}
          \item [a)] If $\mathcal{K}^*_i\neq\emptyset$ for some $i$, then $\mathcal{K}^*_i \neq\emptyset$ for every $i$. Moreover, each $\mathcal{K}^*_i$ has exactly one component. 
          \item [b)] If $\mathcal{K}^*_i\neq\emptyset$ for some $i$, then there exists a finite rank subgroup $F$ such that $\mathcal{K}^*_i = [F]$ for each $i$. 
          \item [c)] There exists an admissible subgroup system for the collection  $\phi_1, \cdots,  \phi_k$. 
         
      \end{enumerate}
\end{lemma}
\begin{proof}
    For concreteness assume that $\mathcal{K}^*_i = \mathcal{F}_i \cup [F_{i_1}] \ldots \cup [F_{i_n}]$ is nonempty and let $j\neq i$. Let $\Lambda^+_j$ be an attracting lamination of $\phi_j$ such that $\mathcal{A}_{na}(\Lambda^+_j) = \mathcal{F}_j \cup [F_j]$ carries $\mathcal{K}^*_i$. Sufficiently different condition implies that each component of the free factor system $\mathcal{F}_i$ must be  carried by $[F_j]$. Moreover, if the conjugacy class $[g_{i_k}]$ carried by the geometric component $[F_{i_k}]$, which represents the indivisible Nielsen path associated to the corresponding geometric strata, is a free factor conjugacy class, then $[F_{i_k}]$ must be carried by $[F_j]$ due to sufficiently different condition. On the other hand if $[g_{i_k}]$ is not a free factor conjugacy class and is carried by some component of $\mathcal{F}_j$, then the same component will also  carry the free factor support of $[g_{i_k}]$, hence the free factor support of the corresponding geometric lamination (which is invariant under $\phi_i$) - violating sufficiently different condition. Thus each $[F_{i_k}]$ (and hence entire $\mathcal{K}^*_i$) must be carried by $[F_j]$. This also proves that $\mathcal{K}^*_j$ is nonempty as $[F_j]$ is one of the components of the sink of $\phi_j$. 

    Now, switching the roles of $i, j$ in the above argument, we see that there exists an attracting lamination $\Lambda^+_i$ with a nonattracting subgroup system $\mathcal{F}'_i \cup [F_{i_l}]$ for some $1\leq l \leq n$ and entire $\mathcal{K}^*_j$ is carried by $[F_{i_l}]$. But we know that $[F_j]$ carries each $[F_{i_k}]$, thus proving that $[F_j] = [F_{i_l}]$ and $[F_{i_k}]$ is trivial if $k\neq l$. For the final step, observe that we have reduced to the case $\mathcal{K}^*_i = \mathcal{F}_i \cup [F_j]$ is carried by $\mathcal{A}_{na}(\Lambda^+_j) = \mathcal{F}_j \cup [F_j]$. If $[B]\in \mathcal{F}_i$ is nontrivial free factor component, as seen earlier - it must be carried by $[F_j]$, which violates malnormality of $\mathcal{K}^*_i$. Hence $\mathcal{K}^*_i = [F_j]$ has a single component. Since $i$ was chosen arbitrarily, this proves (a). 

    To prove (b), observe that in the paragraph above, we have actually shown that $\mathcal{K}^*_i = [F_j] = [F_{i_l}] = \mathcal{K}^*_j$. Since $i, j $ are arbitrary here, we may let $[F] = [F_j]$ and complete the proof of (b). 

    For the final part, we note that if all of the nonattracting sinks are empty, then we take $\mathcal{K} = \emptyset$ to be admissible subgroup system, otherwise we may take $\mathcal{K} = \{[F]\}$ as our admissible subgroup system. This completes  the proof.

\end{proof}


Having identified the structure of the nonattracting sinks for a sufficiently different collection of exponentially growing outer automorphisms, we proceed to answering the question about when we can expect extension groups to have a cusp-preserving relative hyperbolic structure. The answer breaks down into three cases. The first case that we deal with is the following proposition, when all the nonattracting sinks are nonempty. The remaining two cases when either all nonattracting are empty (Theorem \ref{suffdiffhyp}) or the mixed case with some being empty (Theorem \ref{RH}) are dealt with later.
\begin{proposition}\label{suffdiff}
  Let $\phi_1, \cdots, \phi_k$ be a collection of pairwise sufficiently different and exponentially growing outer automorphisms. If all the nonattracting sinks are nonempty, then exactly one of following conditions are true:
    \begin{enumerate}
        \item $ \phi_1, \cdots, \phi_k $ is a collection of fully irreducible geometric outer automorphisms induced by some pseudo-Anosov homemorphisms of the same compact surface with one boundary component. 
        Moreover, all sufficiently high powers of $\phi_i$' s generate a free group $Q$ such that $E_Q$ has a cusp-preserving relatively hyperbolic structure.
       \item $\mathcal{K}^*_i \neq \mathcal{K}^*_j$ for some $i\neq j$ and the extension $E_Q$ of a free group $Q$, which is generated by sufficiently high powers of $\phi_i$'s, cannot have a cusp preserving relatively hyperbolic structure. 
    \end{enumerate}
\end{proposition}
\begin{proof}
    When (1) holds, the work of Bowditch \cite{Bow-07} and Mj-Reeves \cite{MjR-08} for mapping classes of once punctured surfaces shows that sufficiently high powers of $\phi_i$'s generate a free group $Q$ of rank $k$ and the corresponding extension group $\F\rtimes \widehat{Q}$ is hyperbolic relative to the collection $\{\langle \sigma\rangle \oplus \langle \phi_i\rangle\}$, where $\sigma$ is the conjugacy class corresponding to the boundary of the surface. This gives us a cusp-preserving relatively hyperbolic structure and an admissible subgroup system $\{[\langle \sigma \rangle]\}$ for $Q$, implying that (2) fails.

    Suppose that (1) is false. We argue by contradiction to show that (2) must be true. Our argument involves showing that we cannot satisfy the hypothesis of Lemma \ref{rhconditions}, hence we must satisfy the hypothesis of Proposition \ref{nrh}. This will allow us to conclude that admissible subgroup system does not exist. Suppose on the contrary that we do indeed satisfy the hypothesis of Lemma \ref{rhconditions}. Conclusion $(2)$ of Lemma \ref{rhconditions} proves the existence of a finite rank subgroup $[F]$ such that $\mathcal{K}^*_i = [F]$ for each $i$. Moreover, the proof shows that this $[F]$ is actually a geometric component of $\mathcal{K}^*_i$, for each $i$, i.e., for each $\phi_i$ there exists some (geometric) attracting lamination $\Lambda^+_i$ such that $\mathcal{A}_{na}(\Lambda^+_i) = \mathcal{F}_i \cup [F]$ for some free factor system $\mathcal{F}_i$.
    
    Note that if $F$ is not infinite cyclic, then this means there are NEG edges attached to the boundary circle of the surface which supports the lamination $\Lambda^+_i$ for each $i$. Using the fact that $\mathcal{A}_{na}(\Lambda^+_i)$ cannot carry the free factor support of $\Lambda^+_i$, we may conclude that if $[F]$ is not cyclic, then there is a common free factor conjugacy class (namely, the free factor support of the surface noted earlier)  which is left invariant by each $\phi_i$ - thus  violating sufficiently different hypothesis. Thus $F$ must be infinite cyclic, say $\langle c \rangle$, where $[c]$ represents the conjugacy class of the word representing the boundary of the surface which supports all the laminations $\Lambda^+_i$'s. If the free factor support of this surface was proper, then we would violate sufficiently different hypothesis. Hence the surface and therefore $[c]$ must fill, \emph{i.e.} $\mathcal{F}_{supp}([c]) = \F$. This implies that the geometric stratum corresponding to the  $\Lambda^+_i$ is a top stratum for each $i$. Moreover, a geometric lamination being bottommost (\cite[Proposition 2.15]{HM-20}) implies that any of the lower strata cannot be EG, since the lamination fills. But since the sink of each $\phi_i$ is just $[\langle c \rangle]$, the lower filtration is just trivial (contractible) in each case. 
    Hence $\mathcal{A}_{na}(\Lambda^+_i) = [F] = [\langle c \rangle]$ for each $i$. 
    
    Using \cite[Lemma 5.3]{Gh-16}, this implies that each $\phi_i$ is a geometric fully-irreducible map which is induced by a pseudo-Anosov homeomorphism on the same compact surface (with one boundary component), bringing us back to case (1) - which we had assumed to fail - hence giving us a contradiction. In conclusion, if (1) fails, then we must satisfy the hypothesis of Proposition \ref{nrh}, and hence conclusion (2) follows using Theorem \ref{main1}.

\end{proof}

\section{small displacement sets, weak attraction theory and further obstructions to (relative) hyperbolicity}\label{sec:4}

To further our analysis and suggested by the previous section, we will consider a \emph{small displacement sets} in $\FF$ for exponentially growing outer automorphisms. We will then investigate interactions between attracting/repelling laminations, their corresponding non-attracting subgroup systems and the  small displacements sets.

\subsection{Small displacement sets of partial fully irreducibles}

\begin{definition} Let $F$ be some free-factor of $\F$ which is invariant under $\psi\in\out$. If the restriction of $\psi$ to $F$ is fully irreducible element of $\rm Out (F)$, then we say that $\psi$ is partial fully irreducible on  $F$.
\end{definition}
 Consider the isometric action of $\out$ on the free factor complex, $\mathcal F\mathcal F$.
For an automorphism $\phi$ and constant $C >  0$, we define the \emph{small displacement set} of $ \langle \phi \rangle$ on $\mathcal F\mathcal F$ to be,
\[
  \mathcal S_C= \{x \in \mathcal F\mathcal F: \exists k \neq 0\,\, \text{such that}\,\, d(x, \phi^k(x)) \leq C \} \]

Below we will prove that a small displacement set of a partial fully irreducible outer automorphism has a diameter bounded from above. For that we will need a few definitions regarding $\FF$.

A \emph{marking} of a graph $G$ is a homotopy equivalence $\calR_n \rightarrow G$ from a rose $\calR_n$. A metric on $G$ is a function that assigns a positive number (length) to each edge of $G$.  
 The (unprojectivized) \textit{outer space} is a space of  marked metric graphs which is introduced by Culler and Vogtmann in \cite{Vogt1} as an analog of \Teich space. We will denote by $CV$ the \textit{projectivized} outer space, in which the graphs will all have total volume $1$. For the details we refer the reader to \cite{Vogt1} and \cite{VogtSurvey}.

We will use the coarse projection $\pi: CV \rightarrow \FF$ defined as follows. For each proper subgraph $\Gamma_0$ of a marked graph $G$ that contains a circle, its image in $\FF$ is the conjugacy class of the smallest free factor containing $\Gamma_0$. Now by \cite{BF-12}, for two such proper subgraphs $\Gamma_1$ and $\Gamma_2$, $d_{\FF}(\pi(\Gamma_1),\pi(\Gamma_2) )\leq 4$ (Lemma 3.1, \cite{BF-12}). Then for $G\in CV$ we define,
\[ \pi(G):=\{\pi(\Gamma)|\,\Gamma \, \text{is a proper, connected, noncontractible subgraph of}\, G\}\]

We will call the induced map $CV\rightarrow \FF$ also $\pi$ which is clearly a \textit{coarse} projection in that the diameter of each $\pi(G)$ is bounded by $4$.

For a point $G\in CV$ and $A<\F$ we consider the core  graph $A|G$ corresponding to the conjugacy class of $A$. We will denote the projection of this core subgraph to $\FF(A)$ by $\pi_A(G)$.

In other words, $\pi_A(G)=\pi(A|G)$ where $\pi: CV \rightarrow \FF$ is the coarse Lipschitz map that takes a graph to the collection of all the proper subgraphs it contains. The image of this projection is of diameter at most $4$, by Bestvina-Feighn (\cite{BF-12}).
The pulled back metric via the immersion $p: A|G \rightarrow G$ gives that $A|G\in CV(A)$, the outer space of $A$.

Let $A$ and $B$ be conjugacy classes of two free factors. Assume that $A$ and $B$ are not disjoint, in other words they are not free factors of a common free splitting of $\F$ and one of them is not included in  the other. In this case $A$ and $B$ said to \emph{overlap}. For two overlapping conjugacy classes of free factors, we have the following definition.
\begin{definition}(\cite{BestSF, ST1,ST2}) Let $A,B$ be two overlapping free factors with rank of $A$ at least $2$. Then the \emph{subfactor projection}  $\pi_A(B)$ of $B$ to $A$ is defined to be,
\[ \bigcup \{\pi_A(G): G\in CV\,\, \text{and}\,\, B|G\subset G \}\]
where $B|G\subset G$ will mean embedding of the core graph  $B|G$ in  $G$.
\end{definition}
In  other words, given any tree $T$ with a vertex stabilizer $B$ and $\F$ action, $A$ fixes a tree $T^A$. If $T^A$ is not degenerate, which is guaranteed by overlapping condition,  the induced action of $A$ on $T^A$ gives a set of vertex stabilizers; by the Bass--Serre theory. $\pi_A(B)$ is defined to be this set of vertex stabilizers.\\
\textbf{Remark: } Any two rank $1$ free factors of $\F$ are either disjoint or equal. There is no possibility of overlap. Hence the definition of subfactor projections uses rank $\geq 2$.

\begin{lemma}[Bounded Geodesic Image Theorem \cite{ST1}, \cite{BestSF}] \label{BGIT}

For free group of rank $n\geq 3$ there is a number $ M_0\geq 0 $ such that if $A$ is a free factor and  $\gamma$ is a geodesic in $\mathcal F\mathcal F$ such that every vertex of $\gamma$ meets $A$. Then
\begin{center}
diam $\{\pi_A(\gamma) \}\leq M_0$
\end{center}
\end{lemma}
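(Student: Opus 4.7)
The strategy is to adapt the proof of Masur--Minsky's Bounded Geodesic Image Theorem for the curve complex to subfactor projections in the free factor complex, following Subramanya--Taylor \cite{ST1} and Bestvina--Feighn \cite{BestSF}. First I would establish a local Lipschitz estimate: if $v, w$ are free factors with $d_{\FF}(v,w) = 1$ and both overlap $A$, then $d_{\FF(A)}(\pi_A(v), \pi_A(w)) \leq L$ for some universal constant $L$. The idea is that adjacency in $\FF$ means one of $v, w$ is a proper subfactor of the other up to conjugacy, so we can realize both simultaneously as core subgraphs of a common marked graph $G \in CV$. Since $A$ overlaps both $v$ and $w$, both $\pi_A(v)$ and $\pi_A(w)$ contain $\pi(A|G)$ as a subset, and by the diameter bound of Subramanya--Taylor each of $\pi_A(v), \pi_A(w)$ is already a set of bounded $\FF(A)$-diameter. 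Combined with the fact that $\pi(A|G)$ has diameter at most $4$ in $\FF(A)$ by \cite{BF-12}, this yields the estimate.

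The essential input is a Behrstock-type inequality for subfactor projections, proved in \cite{ST1, BestSF}: there is a constant $B$ such that whenever $v, w$ are free factors overlapping $A$ with $d_{\FF(A)}(\pi_A(v), \pi_A(w)) > B$, every $\FF$-geodesic from $v$ to $w$ enters the $1$-neighborhood of $A$ in $\FF$. This is the hyperbolic input that replaces the analogous subsurface-projection estimate used by Masur--Minsky.

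With these two ingredients the proof is immediate. Suppose for contradiction that $\diam(\pi_A(\gamma)) > B$. Then two vertices $v_s, v_t$ of $\gamma$ satisfy $d_{\FF(A)}(\pi_A(v_s), \pi_A(v_t)) > B$, so by the Behrstock inequality some intermediate vertex $v_j$ of the $\FF$-geodesic $\gamma$ satisfies $d_{\FF}(A, v_j) \leq 1$. But adjacency in $\FF$ means one of $A, v_j$ is a proper subfactor of the other (or $A = v_j$), so $v_j$ does not overlap $A$---contradicting the hypothesis that every vertex of $\gamma$ meets $A$. Hence $\diam(\pi_A(\gamma)) \leq B$, and we may take $M_0 = B + L$ to absorb the local Lipschitz fluctuation at vertices of $\gamma$. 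The main obstacle, and the technical heart of the result, is the Behrstock inequality in step two: it requires careful analysis of greedy folding paths in outer space combined with the $\delta$-hyperbolicity of $\FF$ and $\FF(A)$, and is the principal content of \cite{ST1, BestSF}. The local Lipschitz step is comparatively routine once the subfactor projection apparatus is in place.
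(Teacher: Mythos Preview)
The paper does not supply its own proof of this lemma; it is quoted verbatim as a known result from \cite{ST1} and \cite{BestSF}, so there is no argument in the paper to compare your sketch against.

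That said, your sketch has a genuine circularity. What you call the ``Behrstock-type inequality'' --- that if $d_{\FF(A)}(\pi_A(v),\pi_A(w))>B$ then every $\FF$-geodesic from $v$ to $w$ must pass within distance~$1$ of $A$ --- is essentially the contrapositive of the Bounded Geodesic Image Theorem itself, not an independent ingredient. The actual Behrstock inequality established in \cite{ST1,BestSF} is a statement about projections to \emph{two} overlapping factors $A$ and $B$: it bounds $\min\big(d_{\FF(A)}(\pi_A(B),\pi_A(C)),\, d_{\FF(B)}(\pi_B(A),\pi_B(C))\big)$ uniformly over $C$. That does not directly yield the statement you invoke. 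The proofs in the cited references instead pass through the geometry of folding paths in Outer space: one shows that along a folding path whose $\FF$-shadow avoids a neighborhood of $A$, the subfactor projection to $\FF(A)$ moves a uniformly bounded amount, and then combines this with the fact (from \cite{BF-12}) that $\FF$-geodesics are Hausdorff-close to projections of folding paths. Your local Lipschitz step is in the right spirit, but as written your step~(2) assumes the conclusion rather than proving it.
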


\begin{proposition} \label{diam} Let $\phi$ be partially fully irreducible automorphism with respect to a free factor $F^1$ with minimum translation length $\lambda$ on $F^1$. Then, whenever $\lambda \geq M_0$ and for every $C > 0$, there exists a sufficiently large constant $D=D(C,\lambda)$ such that the diameter of the small displacement  set
\[
\mathcal S_{\phi,C}= \{x \in \mathcal F\mathcal F: \exists k \neq 0\,\, \text{such that}\,\, d_{\FF}(x, \phi^k(x)) \leq C \} \]
corresponding to $ \langle \phi \rangle$ is bounded above by $D$.
\end{proposition}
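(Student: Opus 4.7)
The plan is to show that every $x \in \mathcal{S}_{\phi,C}$ lies within bounded $\FF$-distance of the fixed vertex $[F^1]$; this immediately bounds $\diam(\mathcal{S}_{\phi,C})$.

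The argument combines: (i) the loxodromic action of $\phi|_{F^1}$ on $\FF(F^1)$ with stable translation length $\lambda$, giving a lower bound
\[
d_{\FF(F^1)}(y, \phi^m(y)) \;\geq\; |m|\lambda - B
\]
for a constant $B$; (ii) coarse $\phi$-equivariance of the subfactor projection $\pi_{F^1}$, available since $\phi$ preserves $F^1$; and (iii) the Bounded Geodesic Image Theorem (Lemma~\ref{BGIT}). A key combinatorial observation is that any vertex $v \in \FF$ with $d_{\FF}(v,[F^1]) \geq 3$ must overlap $F^1$, since otherwise $v$ and $F^1$ would be equal, comparable, or disjoint as free factors, forcing $d_{\FF}(v,[F^1])\leq 2$. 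In particular, $\pi_{F^1}(v)$ is defined for such $v$.

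Fix $x \in \mathcal{S}_{\phi,C}$; we may assume $x$ overlaps $F^1$, otherwise $d_{\FF}(x,[F^1])\leq 2$ already. Pick $k \neq 0$ with $d_{\FF}(x,\phi^k x) \leq C$, so iteration yields $d_{\FF}(x, \phi^{nk}x) \leq nC$ for all $n \geq 1$. Let $\gamma_n$ be an $\FF$-geodesic from $x$ to $\phi^{nk}(x)$. Either some vertex of $\gamma_n$ lies within $\FF$-distance $2$ of $[F^1]$, in which case $d_{\FF}(x,[F^1]) \leq nC+2$; or every vertex of $\gamma_n$ is at distance $\geq 3$ from $[F^1]$ and hence overlaps $F^1$. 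In the latter case, Lemma~\ref{BGIT} gives $\diam(\pi_{F^1}(\gamma_n))\leq M_0$, and coarse equivariance yields
\[
d_{\FF(F^1)}\big(\pi_{F^1}(x),\, \phi^{nk}\pi_{F^1}(x)\big) \;\leq\; M_0 + \epsilon_0.
\]
Combining this with (i) gives $n|k|\lambda - B \leq M_0+\epsilon_0$, and since $|k|\geq 1$ and $\lambda \geq M_0$, we obtain $n \leq n_0 := 1+(B+\epsilon_0)/M_0$, a constant independent of $x$ and $k$. Thus choosing any $n > n_0$ rules out this branch, so the first alternative must occur, producing $d_{\FF}(x,[F^1]) \leq (n_0+1)C+2$. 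Hence $\diam(\mathcal{S}_{\phi,C}) \leq 2((n_0+1)C+2) =: D$.

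The main technical obstacle is establishing uniformity of the constants $B$ and $\epsilon_0$. The loxodromic lower bound $|m|\lambda - B$ is cleanest when $\pi_{F^1}(x)$ lies at bounded distance from the axis of $\phi|_{F^1}$ in $\FF(F^1)$, so one must first coarsely project onto the axis using its quasi-convexity in the hyperbolic space $\FF(F^1)$ and absorb the resulting additive error into $B$; similarly, the coarse $\phi$-equivariance of $\pi_{F^1}$ must be verified to hold with a uniform error $\epsilon_0$ across all $x$. The hypothesis $\lambda \geq M_0$ is precisely what allows the balance between the BGIT bound $M_0$ and the loxodromic growth $n|k|\lambda$ to cap $n$ at a universal constant, rather than at a bound which degenerates as $\lambda$ shrinks relative to $M_0$.
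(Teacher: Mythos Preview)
Your argument follows the same overall strategy as the paper's: bound $d_{\FF}(x,[F^1])$ uniformly by combining the loxodromic action of $\phi|_{F^1}$ on $\FF(F^1)$ with the Bounded Geodesic Image Theorem, so that some vertex on a path from $x$ to $\phi^k(x)$ fails to meet $F^1$ and hence sits close to $[F^1]$. The paper routes this through Outer space---lifting $x$ to some $G\in CV$, taking a folding path from $G$ to $\phi^k(G)$, and using that its $\pi$-image in $\FF$ is an unparametrized quasi-geodesic Hausdorff-close to a geodesic---whereas you work directly with $\FF$-geodesics, which is more elementary since it avoids the folding-path machinery entirely. Your iteration (replacing $\phi^k$ by $\phi^{nk}$ to push the $\FF(F^1)$-displacement safely past $M_0$ despite the additive errors $B,\epsilon_0$) makes explicit a step the paper compresses into the parenthetical ``or for a suitable power $>k$ of $\phi$''.

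One small correction: your claim that non-overlapping free factors satisfy $d_{\FF}(v,[F^1])\leq 2$ is not quite right in the disjoint case. If $v*F^1=\F$ then no single intermediate proper free factor can contain both, and one only gets $d_{\FF}(v,[F^1])\leq 3$ (or the paper's safe bound $\leq 4$). Replace your overlap threshold $3$ by $5$ and adjust the additive constant in the final bound accordingly; the rest of the argument is unaffected.
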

\begin{proof}
Let $x\in S_C$. Then, by definition of this set, for some $k>0$ we have $d_{\mathcal F\mathcal F}(x, \phi^{k}(x))\leq C$. Let also $G\in CV$ such that $\pi(G)=x$.  Now, let $\{G_t\}$ be a folding path in $CV$ between $G$ and $\phi^k(G)$. The projection path $\pi(\{G_t\})$ in $\mathcal F \mathcal F$ is an unparametrized quasi geodesic between $x$ and $\phi^{k}(x)$ and it is Hausdorff close to a geodesic (\cite{BF-12}).\\
Now, since $F^1$ is of rank at least 2, the subfactor projection to the free factor complex $\FF(F^1)$ is coarsely defined. Moreover, by the hypothesis (or for a suitable power $>k$ of $\phi$),
$d_{F_1}(\tilde x, \phi^k_{|F^1}(\tilde{x}))>M_0$ where $\tilde{x} $ is the subfactor projection of $x$ to $F^1$ and $M_0$ is the constant from the Lemma \ref{BGIT}. By Lemma \ref{BGIT} again, there is a vertex $F^2$ along $\pi(\{G_t\})$ which does  not project to $\FF(F^1)$. By \cite{ST1} this means that either $F^1$ and $F^2$ are disjoint or one is included in the other as subgroups. Hence we have $d_{\mathcal F\mathcal F}(F^1, F^2)\leq 4$.\\
Use triangle inequality to deduce that $d_{\mathcal F\mathcal F}(x, F^1) \leq 4+C+C_1$ where $C_1$ is the Hausdorff distance between $\pi(\{G_t\})$ and the geodesic between $x$ and $\phi^{k}(x)$. Hence we have, $D=2(4+C+C_1)$.

\end{proof}
Following proposition captures the impact of the separation of small displacement sets of automorphisms $\phi_i, \phi_j$, $i\neq j$ on the weak attraction property of their corresponding lamination sets.  We continue using the same notations we set up at the beginning of this subsection (see the paragraph  before Proposition \ref{dist2wat}).

Thus, let $\mathcal \mathcal S_{\phi}$ denote the small displacement set of $\langle \phi\rangle $ in $\FF$.  We define the distance between the small displacement sets of $\phi_i$ and $\phi_j$ in $\FF$ to be,

\[d_{\FF}(\mathcal S_{\phi_i}, \mathcal S_{\phi_j}):=\min \{d_{\FF}([A], [B]):\,\,\, A\in \mathcal S_{\phi_i}, B \in \mathcal S_{\phi_j}\}   \]
\begin{proposition}\label{pfisep}
     Let $\phi_i$ be and partially fully irreducible on $F^i$ for all $i \in \{ 1,\cdots,  k\}$. Assume  $d_{\FF}(\mathcal S_{\phi_i}, \mathcal S_{\phi_j})\geq 5$ for all $i\neq j $. Then:
     \begin{enumerate}
         \item  There exists some $M$ such that whenever $m_i \geq M$, for all $i$, $\langle \phi_1^{m_1}, \cdots \phi_k^{m_k}\rangle$ is a free group.
         \item For $i\neq j$, $\phi_i$ and $ \phi_j$ do not have any common invariant free factors.
 
         \item  For all  $i \in \{ 1,\cdots,  k\}$, let $\Lambda^+_i$ be any attracting lamination of $\phi_i$ and  $\mathcal{A}_{na}(\Lambda^\pm_i)$ be nontrivial. For  a pair $\phi_i, \phi_j$, $i\neq j$, let  $\{[A^i_k]\}_{k = 1}^p = \mathcal{A}_{na}(\Lambda^\pm_i)$ and $\{[B^j_s]\}_{s = 1}^q = \mathcal{A}_{na}(\Lambda^\pm_j)$. Then, $\{[A^i_k \cap B^j_s]\}_{k, s}$ is a malnormal subgroup system. 
         \item No generic leaf of any attracting or repelling lamination of a $\phi_i$ is carried by a nonattracting subgroup system for  \emph{any} nongeometric dual lamination pair of $\phi_j$, for $j \neq i$.
        \item If $[g]$ is a nontrivial conjugacy class of a free factor which is fixed by $\phi_i$, then $[g]$ is cannot be carried by the nonattracting subgroup system of any attracting lamination of any $\phi_j$ for $j\neq i$.

    \end{enumerate}
\end{proposition}
\begin{proof}
    (1) is straightforward using Corollary \ref{pfifbf}. 
    
    Any invariant free factor of $\phi_i$ is contained in the respective small displacement set. When the respective small displacement sets are disjoint, the distance  between any invariant free factor of $\phi_i$ and any invariant free factor of $\phi_j$ is greater than $0$. Hence we cannot have a common invariant free factor. This proves (2).

    Every free factor component of $\mathcal{A}_{na}(\Lambda^\pm_i)$ is a vertex in the small displacement set of $\phi_i$. When the small displacement sets are of distance at least $5$, every free factor component of $\mathcal{A}_{na}(\Lambda^\pm_i)$ of rank $\geq 2$ meets every free factor component of $\mathcal{A}_{na}(\Lambda^\pm_j)$ of rank $\geq 2$. 
Malnormality of the collection of subgroups $\{[A^i_k \cap B^j_s]\}_{k, s}$ follows directly from the malnormality of $\mathcal{A}_{na}(\Lambda^\pm_i)$, for all $i$.  This proves (3).

 Let $\Lambda^+_j$ be any attracting lamination for $\phi_j$ with nontrivial nonattracting subgroup system $\mathcal{A}_{na}(\Lambda^\pm_j)$. 
 By definition every free factor component of $\mathcal{A}_{na}(\Lambda^\pm_j)$ is invariant under $\phi_j$. Hence every free factor component of  $\mathcal{A}_{na}(\Lambda^\pm_j)$ is an element of the small displacement set of $\phi_j$.  Let  $\ell$ be a generic leaf of some attracting lamination $\Lambda^+_i$, $i\neq j$ and let $[B]\in \mathcal{A}_{na}(\Lambda^\pm_j)$ be some free factor component of rank $\geq 2$. Assume $\ell$ is carried by $[B]$. Then $\Lambda^+_i$ is carried by $[B]$ which implies that $\mathcal{F}_{supp}(\Lambda^+_i) < [B]$, since $\mathcal{F}_{supp}(\Lambda^+_i)$ is the smallest free factor system that contains $\Lambda^+_i$. But $\mathcal{F}_{supp}(\Lambda^+_i)$ is invariant under $\phi_i$ and hence $\mathcal{F}_{supp}(\Lambda^+_i) \in \mathcal S_{\phi_i}$  which violates $d_{\FF}(\mathcal S_{\phi_i}, \mathcal S_{\phi_j})\geq 5$ when $i\neq j $. This completes the proof of (4). 
 
Let $[\langle g \rangle]$ be the conjugacy class of a rank $1$ free factor fixed by $\phi_1$. If $j\neq 1$ and $\Lambda^+_j$ is an attracting lamination of $\phi_j$, with $\mathcal{A}_{na}(\Lambda^+_j) = \mathcal{F}_j \cup [F_{m_j}]$. If $[g]$ is carried by $\mathcal{F}_j$, we would violate the distance between small displacement sets $\geq 5$ condition. So suppose that $[g]$ is carried by $[F_{m_j}]$. In this case, note that the free factor support of $\Lambda^+_j$ is not carried by $[F_{m_j}]$ and hence the free factor conjugacy classes $[\langle g \rangle]$ and $\mathcal{F}_{supp}(\Lambda^+_j)$ are disjoint and therefore violate that the small displacement sets have distance $\geq 5$. Therefore, $[g]$ cannot be carried by $\mathcal{A}_{na}(\Lambda^+_j)$. 
\end{proof}

Following proposition produces examples similar to Example \ref{notrh} when the small displacement sets are sufficiently separated.
\begin{proposition}  Let $\phi_i$ be and partially fully irreducible  on $F^i$  and assume $d_{\FF}(\mathcal S_{\phi_i}, \mathcal S_{\phi_j})\geq 5$ for all $j\neq i$. Suppose that some $\phi_i$ fixes a rank $1$ free factor $[g]$. Then  $\F \rtimes \widehat{Q}$ is not hyperbolic relative to any collection of subgroups so that cusps are preserved.
\end{proposition}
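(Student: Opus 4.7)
The plan is to invoke Proposition \ref{nrh}, whose conclusion (b) is exactly the non-cusp-preserving-relative-hyperbolicity statement to be proved. The task therefore reduces to verifying the two hypotheses of that proposition for the particular index $i$ such that $\phi_i$ fixes the rank one free factor $[\langle g\rangle]$. Hypothesis (1) is immediate: since $\phi_i$ fixes the conjugacy class $[g]$, it is not weakly attracted to any attracting lamination of $\phi_i$, so by Lemma \ref{sinkprop}(2) the class $[g]$ is carried by $\mathcal{K}^*_i$; in particular $\mathcal{K}^*_i\neq\emptyset$.

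The core of the argument is hypothesis (2): I will show that for every $j\neq i$ and every attracting lamination $\Gamma^+_j$ of $\phi_j$, the class $[g]$ is not carried by $\mathcal{A}_{na}(\Gamma^\pm_j)$, which implies that $\mathcal{K}^*_i$ is not carried there (since $[g]$ itself is carried by $\mathcal{K}^*_i$). Suppose toward a contradiction that $g$ is conjugate into some component $[H]$ of $\mathcal{A}_{na}(\Gamma^\pm_j)$. Because $\phi_j$ is rotationless, each such component is $\phi_j$--invariant as a conjugacy class of subgroups, so $[H]$ is fixed by $\phi_j$ as a vertex of $\FF$ whenever it represents one. Recalling from the discussion following Lemma \ref{NAS} that $\mathcal{A}_{na}(\Gamma^\pm_j)$ is either a free factor system or a free factor system together with one cyclic component $[\langle c\rangle]$ arising from the indivisible Nielsen path of a geometric stratum, I split into two cases. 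If $H$ is a free factor of rank at least two, then $[H]\in\mathcal{S}_{\phi_j}$ and the inclusion $\langle g\rangle<H^w$ (after conjugation by a suitable $w$) yields an edge in $\FF$ between $[\langle g\rangle]\in\mathcal{S}_{\phi_i}$ and $[H]\in\mathcal{S}_{\phi_j}$, so $d_{\FF}(\mathcal{S}_{\phi_i},\mathcal{S}_{\phi_j})\leq 1$, contradicting the standing hypothesis $d\geq 2$. Otherwise $H$ is cyclic, generated by some element $a$ (either a rank one free factor component or the iNP-cyclic $\langle c\rangle$), and $g$ is conjugate into $\langle a\rangle$; primitivity of $g$, forced by $\langle g\rangle$ being a free factor of $\F$, implies that $g$ equals a generator of $\langle a\rangle$ up to conjugation and inversion, giving $[\langle g\rangle]=[\langle a\rangle]$ as vertices of $\FF$. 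Since $[\langle a\rangle]$ is $\phi_j$--fixed, so is $[\langle g\rangle]$, placing $[\langle g\rangle]\in\mathcal{S}_{\phi_i}\cap\mathcal{S}_{\phi_j}$ and yielding $d_{\FF}(\mathcal{S}_{\phi_i},\mathcal{S}_{\phi_j})=0$, again a contradiction.

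The main point requiring care is the cyclic sub-case: I need to be sure that the primitivity of $g$ and the existence of a conjugacy $g=w a^k w^{-1}$ really force $k=\pm 1$ and hence the equality $[\langle g\rangle]=[\langle a\rangle]$ as vertices of $\FF$, and that the $\phi_j$--invariance of the subgroup conjugacy class $[\langle a\rangle]$ really does transfer to $\phi_j$ fixing this vertex of $\FF$ (which is automatic from how $\FF$ is defined). Once this is handled, hypothesis (2) of Proposition \ref{nrh} holds for the chosen $i$ against every $j\neq i$, and conclusion (b) of that proposition is exactly the statement that $\F\rtimes\widehat{Q}$ admits no cusp-preserving relatively hyperbolic structure, finishing the proof.
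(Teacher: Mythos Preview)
Your proof is correct and follows essentially the same approach as the paper: both reduce to Proposition~\ref{nrh} by showing $\mathcal{K}^*_i\neq\emptyset$ (via $[g]$) and that $[g]$ is not carried by $\mathcal{A}_{na}(\Gamma^+_j)$ for any attracting lamination of $\phi_j$, $j\neq i$. The paper simply cites Corollary~\ref{rank1ff} for the latter step, whereas you re-prove it directly with a slightly more explicit treatment of the cyclic (iNP) component case; otherwise the arguments are the same.
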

    \begin{proof}
       We note that by Proposition \ref{pfisep} (item (5)) - $[g]$ is is not carried by the nonattracting subgroup system of any attracting lamination of any $\phi_j$ for $j\neq i$. This  tells us that $[g]$ is weakly attracted to every attracting lamination of $\phi_j$, $j\neq i$. Using the fact that $[g]$ is carried by $\mathcal{K}^*_i$, Proposition \ref{nrh} now completes the proof.
    \end{proof}

\section{Geometry of Free by Free extensions}\label{sec:Hyp}

\subsection{Hyperbolicity of the extension from independent atoroidal automorphisms}
Now we are ready to characterize hyperbolicity of the extension group when automorphisms are atoroidal.
   \begin{hypnas}

    Let $\phi_1,  \ldots , \phi_k $ be a collection of exponentially growing outer automorphisms  such that no pair of automorphisms have a common power. 
  Then the following are equivalent:
    \begin{enumerate}
        \item  Each $\phi_i$ is atoroidal outer automorphism and $\mathcal L^{\pm}(\phi_i)\cap  \mathcal L^{\pm}(\phi_j)=\emptyset$ for all $i\neq j$.
        \item  There exists $M > 0$ such that for all $m_i \geq M$, $Q = \langle \phi_1^{m_1},\cdots,\phi_k^{m_k} \rangle$ is a free group and the extension group $\F\rtimes \widehat{Q}$ is hyperbolic (where $\widehat{Q}$ is any lift of $Q$).
    \end{enumerate}
    \end{hypnas}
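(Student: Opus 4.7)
The strategy is to recognize Theorem \ref{hypnas} as the specialization of Theorem \ref{main1} to the degenerate case of the empty subgroup system $\mathcal{K} = \emptyset$. With $\mathcal{K} = \emptyset$, the conclusion ``$\F \rtimes \widehat{Q}$ is hyperbolic relative to $\{K_s \rtimes \widehat{Q}_s\}_{s=1}^p$'' becomes plain hyperbolicity (no peripheral subgroups), and the disjointness condition $\mathcal{L}^\pm_\mathcal{K}(\phi_i) \cap \mathcal{L}^\pm_\mathcal{K}(\phi_j) = \emptyset$ reduces to $\mathcal{L}^\pm(\phi_i) \cap \mathcal{L}^\pm(\phi_j) = \emptyset$.

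For the direction $(1) \Rightarrow (2)$, I first verify that $\emptyset$ is an admissible subgroup system for each $\phi_i$. Axiom (1) of Section \ref{SA} holds because $\phi_i$ is exponentially growing and no lamination is carried by $\emptyset$. Axioms (2) and (3) are vacuous. Axiom (4) follows from the general fact, recorded in the paragraph before Lemma \ref{NAS}, that attracting and repelling laminations of an exponentially growing automorphism have no asymptotic generic leaves and hence admit disjoint weak neighborhoods $V^+, V^-$. Axiom (5), asserting that every conjugacy class is weakly attracted to some element of $\mathcal{L}^+(\phi_i)$, is precisely the characterization of atoroidality invoked in Lemma 3.1 of \cite{Gh-20}. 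Combined with the disjoint lamination hypothesis, all the hypotheses of Theorem \ref{main1} are met, and its conclusion produces $M$ such that for all $m_i \geq M$, $Q = \langle \phi_1^{m_1}, \ldots, \phi_k^{m_k}\rangle$ is a free group and $\F \rtimes \widehat{Q}$ is hyperbolic relative to the empty collection, i.e., hyperbolic. Freeness of $Q$ can alternatively be obtained directly from the disjoint lamination hypothesis via Lemma 3.4.2 of \cite{BFH-00} applied inductively, as in the proof of Lemma \ref{pfifbf}.

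For the direction $(2) \Rightarrow (1)$, suppose $\F \rtimes \widehat{Q}$ is hyperbolic. Since hyperbolicity coincides with strong relative hyperbolicity with empty peripheral collection, the converse direction of Theorem \ref{main1} produces an admissible subgroup system $\mathcal{K}$ for each $\phi_i$ whose induced peripheral collection $\{K_s \rtimes \widehat{Q}_s\}$ is empty and which satisfies the disjoint lamination condition. The only such $\mathcal{K}$ is $\emptyset$. Now admissibility of $\emptyset$ forces axiom (5) to hold with no carried conjugacy classes, i.e., every conjugacy class is weakly attracted to some element of $\mathcal{L}^+(\phi_i)$ --- which is exactly atoroidality by Lemma 3.1 of \cite{Gh-20}. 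The disjointness condition with $\mathcal{K} = \emptyset$ is directly $\mathcal{L}^\pm(\phi_i) \cap \mathcal{L}^\pm(\phi_j) = \emptyset$. This proves (1).

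The main obstacle is justifying the use of Theorem \ref{main1} in the degenerate case $\mathcal{K} = \emptyset$: as formally stated, the theorem presumes a nonempty list of peripheral subgroups. One must either check that the construction of \cite{GGrelhyp} carries through verbatim with $\mathcal{K} = \emptyset$ (so that the output of the proof is a genuinely hyperbolic group rather than a relatively hyperbolic one) or supplement the argument by appealing to a standard combination theorem in this simpler setting, such as the Bestvina--Feighn combination theorem \cite{BF-92}, whose hypotheses are considerably easier to verify once the admissibility axioms for $\mathcal{K}=\emptyset$ are in hand.
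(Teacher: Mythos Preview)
For $(1) \Rightarrow (2)$ your approach matches the paper's: both apply Theorem \ref{main1} with $\mathcal{K} = \emptyset$, and the paper explicitly confirms there is ``no restriction on $\mathcal{K}$ being nonempty in that theorem'', so the obstacle you flag in your final paragraph is not an issue for this direction. One step the paper includes and you omit: it verifies that the hypothesis $\mathcal{L}^\pm(\phi_i) \cap \mathcal{L}^\pm(\phi_j) = \emptyset$ upgrades to the statement that no generic leaf of any lamination of $\phi_i$ is asymptotic to any generic leaf of any lamination of $\phi_j$. The argument is that if $\ell_1 \in \Lambda^+_1 \in \mathcal{L}^\pm(\phi_1)$ and $\ell_2 \in \Lambda^+_2 \in \mathcal{L}^\pm(\phi_2)$ share an end, then $\ell_2$ cannot be weakly attracted to the dual $\Lambda^-_1$ (open condition), so by \cite[Proposition 6.0.8]{BFH-00} $\ell_2$ is itself generic for $\Lambda^+_1$, forcing $\Lambda^+_1 = \Lambda^+_2$. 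This non-asymptoticity is what allows pairwise disjoint attracting neighborhoods to be chosen across different $\phi_i$'s, which is implicitly needed in the machinery behind Theorem \ref{main1}; you should not skip it.

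For $(2) \Rightarrow (1)$ the two routes diverge. The paper does \emph{not} invoke the converse of Theorem \ref{main1}; instead it argues directly from Cannon--Thurston theory. Hyperbolicity of $\F \rtimes \widehat{Q}$ gives a Cannon--Thurston map for the inclusion $\F \hookrightarrow \F \rtimes \widehat{Q}$; the no-common-power hypothesis makes $\phi_i^{\pm\infty}$ and $\phi_j^{\pm\infty}$ distinct boundary points; by \cite[Proposition 5.1]{Mj-97} the corresponding ending lamination sets are disjoint; and \cite[Theorem 3.1, Lemma 4.4]{Gh-20} places each $\mathcal{L}^\pm(\phi_i)$ inside the (closed) ending lamination set at $\phi_i^{\pm\infty}$, whence disjointness. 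Your approach is formally shorter --- read off both atoroidality and disjointness from the $\mathcal{K}=\emptyset$ case of the converse of Theorem \ref{main1} --- but it rests entirely on that converse being valid in the degenerate case, something the paper asserts only for the forward direction. The paper's argument is self-contained in this respect and, as a bonus, makes explicit the link between the train-track laminations $\mathcal{L}^\pm(\phi_i)$ and the Cannon--Thurston ending laminations of the extension group.
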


\begin{proof} We prove the result for $k = 2$ for the sake of simplicity of the notation. General case is an exact replica of the argument.  

    $(1) \implies (2):$ Since $\phi_1, \phi_2$ are both hyperbolic,  by \cite[Lemma 3.1]{Gh-20} every conjugacy class grows exponentially under iteration by both of them. So $\mathcal{K}^*_1 = \mathcal{K}^*_2 = \emptyset$. We will use Theorem \ref{main1} for $\mathcal{K} =\mathcal{K}^*_1 = \mathcal{K}^*_2 = \emptyset$ (there is no restriction on $\mathcal{K}$ being nonempty in that theorem). To this end, all that remains is to show that the set of laminations are pairwise disjoint. Let $f: G\to G$ be a CT--map  representing $\phi_1$. If a generic leaf $\ell_1\in \Lambda^+_1\in \mathcal L^{\pm}(\phi_1)$ has a common end with a generic leaf $\ell_2\in \Lambda^+_2\in \mathcal L^{\pm}(\phi_2)$, then birecurrence implies that both ends of $\ell_i$ have height $s$, where $H_s$ is the exponentially growing strata corresponding to $\Lambda^+_1$ and $\ell_1, \ell_2 \subset G_s$. Since $\ell_2$ is asymptotic to $\ell_1$ and being weakly attracted to the dual lamination $\Lambda^-_1$ is an open condition, $\ell_2$ cannot be  weakly attracted to $\Lambda^-_1$ under iteration by $\phi_1^{-1}$. Using \cite[Proposition 6.0.8]{BFH-00}, we get that $\ell_2$ is a generic leaf of $\Lambda^+_1$. Hence $\Lambda^+_1 = \overline{\ell}_2 = \Lambda^+_2$, violating our hypothesis. Hence (2) follows from Theorem \ref{main1} as $\F\rtimes \widehat{Q}$ is hyperbolic relative to empty sets by this theorem.

    $(2) \implies (1):$ Hyperbolicity of $\F\rtimes \widehat{Q}$ implies that the Cannon-Thurston map for the inclusion $\iota \co \F \rightarrow \F\rtimes \widehat{Q}$ exists. Since $\phi_1, \phi_2$ do not have a common power, $\phi_1^\infty, \phi_2^\infty$ represent two distinct points in the boundary of $\F\rtimes \widehat{Q}$ corresponding the forward end of the axis generated by the elements $\phi_1, \phi_2$ respectively. Using \cite[Proposition 5.1]{Mj-97}, we see that the ending lamination sets of corresponding to $\phi_1^\infty$ and $\phi_2^\infty$ must be disjoint. Since \cite[Theorem 3.1, Lemma 4.4]{Gh-20} guarantees that generic leaves of all attracting laminations for $\phi_i$ is contained in the ending lamination set corresponding to $\phi^\infty_i$ and we know that the ending lamination set corresponding to any boundary point is a closed set, we get that $\phi_1$ and $\phi_2$ have no common attracting lamination. Similar arguments, while working with pairs of distinct boundary points $\{\phi_1^\infty, \phi_2^{-\infty}\}, \{\phi_1^{-\infty}, \phi_2^{-\infty}\}, \{\phi_1^{-\infty}, \phi_2^{\infty}\}$,  gives us $\mathcal L^{\pm}(\phi_1)\cap  \mathcal L^{\pm}(\phi_2)=\emptyset$.
\end{proof}

A simple application of Theorem \ref{main1} given below demonstrates a very easy way to construct free-by-free hyperbolic groups. Note that we neither need the automorphisms to be fully irreducible, nor do we need quasi-isometrically embedded orbits for the quotient group for the hypothesis.  We point out to the reader  that the converse to the corollary is obviously false. 
\begin{hypffc}
     Let $\phi_1,  \ldots , \phi_k $ be a collection of atoroidal  elements which do not have a common power. If no pair $\phi_i, \phi_j$, $i\neq j$ fixes a common vertex in the free factor complex $\FF$, then
     \begin{enumerate}
         \item There exists $M > 0$ such that for all $m_i \geq M$, the group $Q=\langle \phi_1^{m_1},\cdots,\phi_k^{m_k} \rangle$ is a free group, 
         \item the extension group $\F\rtimes \widehat{Q}$ is a hyperbolic group (where $\widehat{Q}$ is any lift of $Q$).
     \end{enumerate}
\end{hypffc}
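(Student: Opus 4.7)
The plan is to deduce Theorem \ref{hypffc} from Theorem \ref{hypnas} by showing that the no common fixed vertex hypothesis on $\FF$ forces the lamination sets $\mathcal{L}^{\pm}(\phi_i)$ to be pairwise disjoint. Since atoroidal automorphisms are automatically exponentially growing, and since atoroidality, the no common power hypothesis, the lamination sets themselves, and the collection of vertices in $\FF$ fixed by an automorphism are all preserved (up to mild bookkeeping) under replacement by a power, I would first replace each $\phi_i$ by a common rotationless power, as guaranteed by \cite{FH-11}. This makes both Theorem \ref{hypnas} and Proposition \ref{fillinglam} applicable.

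Suppose toward contradiction that $\Lambda \in \mathcal{L}^{\pm}(\phi_i) \cap \mathcal{L}^{\pm}(\phi_j)$ for some $i \neq j$. By the structure theory recalled in Section \ref{Prelim}, the free factor support $\mathcal{F}_{supp}(\Lambda)$ is a single conjugacy class of free factor $[F]$. Since $\Lambda$ is $\phi_i$-invariant and $\mathcal{F}_{supp}$ is natural, $[F]$ is fixed by $\phi_i$; by symmetry it is also fixed by $\phi_j$. If $[F]$ is a proper free factor, then $[F]$ is a common fixed vertex of $\phi_i$ and $\phi_j$ in $\FF$, directly contradicting the hypothesis.

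If instead $[F] = [\F]$, so that $\Lambda$ fills, I would apply Proposition \ref{fillinglam} to the pair $\phi_i, \phi_j^{\epsilon}$, where $\epsilon \in \{+1, -1\}$ is chosen so that $\Lambda$ is an attracting lamination for both (this is possible because $\mathcal{L}^+(\phi^{-1}) = \mathcal{L}^-(\phi)$ and all our hypotheses are symmetric under inversion of a single generator). The no common invariant free factor hypothesis of Proposition \ref{fillinglam} is supplied precisely by our no common fixed vertex assumption (in the filling case, any proper invariant common free factor would give such a vertex). The conclusion of Proposition \ref{fillinglam} forces $\phi_i$ and $\phi_j^{\epsilon}$ to share a power, hence so do $\phi_i$ and $\phi_j$, contradicting our hypothesis.

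Having ruled out both cases, $\mathcal{L}^{\pm}(\phi_i) \cap \mathcal{L}^{\pm}(\phi_j) = \emptyset$ for all $i \neq j$, and Theorem \ref{hypnas} yields the desired free group $Q$ and the hyperbolicity of $\F \rtimes \widehat{Q}$. The only delicate point is the reduction to rotationless powers; one must check that the no common fixed vertex hypothesis remains strong enough after this reduction (it does, since $\mathcal{F}_{supp}(\Lambda)$ has a single component, so invariance of that component under each $\phi_i$ implies it is genuinely fixed, not merely permuted). The rest of the argument is a direct combination of Theorem \ref{hypnas}, Proposition \ref{fillinglam}, and the general structure of free factor supports of laminations.
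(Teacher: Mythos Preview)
Your proposal is correct and follows essentially the same route as the paper's proof: assume a common lamination $\Lambda$, derive a contradiction from a common fixed vertex when $\mathcal{F}_{supp}(\Lambda)$ is proper, invoke Proposition~\ref{fillinglam} when $\Lambda$ fills, and then appeal to Theorem~\ref{hypnas}. You are in fact slightly more careful than the paper in two places: you explicitly pass to rotationless powers (which the paper needs for Theorem~\ref{hypnas} but does not mention), and you handle the sign ambiguity $\Lambda\in\mathcal{L}^{\pm}$ by replacing $\phi_j$ with $\phi_j^{\epsilon}$ before applying Proposition~\ref{fillinglam}.
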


\begin{proof}
    Let  $\Lambda^+\in\mathcal L^{\pm}(\phi_i)\cap  \mathcal L^{\pm}(\phi_j)$  be an attracting lamination for some $i,\neq j$. If $\mathcal{F}_{supp}(\Lambda)$ is proper, then $\phi_i, \phi_j$ both fix the vertex corresponding to $\mathcal{F}_{supp}(\Lambda)$, giving us a contradiction. If $\Lambda$ fills $\F$, then Proposition \ref{fillinglam} gives us a contradiction. Hence $\mathcal L^{\pm}(\phi_i)\cap  \mathcal L^{\pm}(\phi_j) = \emptyset$ and the conclusions follow from Theorem \ref{hypnas}.
\end{proof}

\subsection{Hyperbolicity and relative hyperbolicity from sufficiently different automorphisms}

In this section we will focus more on  the action on $\FF$  of our outer automorphisms; fixing some free factors.  

\begin{suffdiffhyp}
          Let $\phi_1, \ldots , \phi_k $ be a  collection of exponentially growing and sufficiently different  outer automorphisms.
      Let $Q = \langle \phi_1^{m_1},\cdots,\phi_k^{m_k} \rangle$ be a free group for sufficiently large $m_i$. 

 Then, $\F\rtimes \widehat{Q}$ is a hyperbolic group if and only if $\mathcal{K}^*_i = \emptyset$ for all $i\in \{1,\cdots, k \}$. 
\end{suffdiffhyp}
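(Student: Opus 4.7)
The plan is to reformulate the condition $\mathcal{K}^*_i = \emptyset$ in terms of atoroidality. By Lemma \ref{sinkprop}(3), a conjugacy class is carried by $\mathcal{K}^*_i$ if and only if it is fixed or grows polynomially under $\phi_i$. Since $\phi_i$ is rotationless, every conjugacy class grows either polynomially or exponentially (with rotationless ensuring that ``periodic'' equals ``fixed''). Hence $\mathcal{K}^*_i = \emptyset$ if and only if every conjugacy class has exponential growth under $\phi_i$, which by the characterization of atoroidals recalled in the introduction (Lemma 3.1, \cite{Gh-20}) is equivalent to $\phi_i$ being atoroidal. This reformulation lets me attack each direction separately.

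For the ($\Leftarrow$) direction, I would note that sufficient difference implies no pair $\phi_i, \phi_j$ shares a fixed vertex in $\FF$: if both fix a common vertex, the distance between their fixed vertex sets is zero, violating the distance-at-least-two requirement; if one of them has no fixed vertex at all, then trivially there is no shared fixed vertex. Combined with the rotationless and no-common-power assumptions, this exactly matches the hypotheses of \thmref{ffc} (Theorem \ref{hypffc}), which directly yields that for sufficiently large $m_i$, $Q$ is free and $\F\rtimes\widehat{Q}$ is hyperbolic.

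For the ($\Rightarrow$) direction, I would argue by contrapositive: suppose $\mathcal{K}^*_i \neq \emptyset$ for some $i$ and produce a $\mathbb{Z}\oplus\mathbb{Z}$ subgroup of $\F\rtimes\widehat{Q}$. By Lemma \ref{sinkprop}(4), $\mathcal{K}^*_i = \mathcal{F}\cup\{[\langle a_1\rangle],\ldots,[\langle a_p\rangle]\}$ with each $[a_j]$ fixed by $\phi_i$. If $p\geq 1$, then $\phi_i$ (hence $\phi_i^{m_i}$) fixes the conjugacy class $[a_1]$. If $p=0$ but $\mathcal{F}\neq\emptyset$, then any component $[F]\in\mathcal{F}$ is $\phi_i$-invariant and $\phi_i|_F$ is rotationless polynomially growing (classes in $F$ must be polynomially growing to lie in $\mathcal{K}^*_i$); by the classical structure of CT-representatives of such automorphisms (the lowest stratum yields a fixed loop) $\phi_i|_F$ fixes a nontrivial conjugacy class of $F$, which is then a nontrivial fixed conjugacy class $[c]$ of $\phi_i$ in $\F$. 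In either case, writing $\Phi_i$ for the lift in $\widehat{Q}$ of $\phi_i^{m_i}$ and choosing $w\in\F$ with $\Phi_i(c)=wcw^{-1}$, a direct computation in the semidirect product shows the element $\alpha=(w^{-1},\Phi_i)\in\F\rtimes\widehat{Q}$ commutes with $(c,e)$; independence follows by projecting to $\widehat{Q}$. Thus $\langle c,\alpha\rangle\cong\mathbb{Z}\oplus\mathbb{Z}$ sits inside $\F\rtimes\widehat{Q}$, contradicting hyperbolicity.

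The main obstacle is the case $p=0$ and $\mathcal{F}\neq\emptyset$ in the forward direction, where I must extract a fixed conjugacy class from a free factor on which $\phi_i$ restricts to a polynomially growing rotationless (hence UPG) automorphism. This relies on the standard fact that UPG automorphisms of $\F_n$ with $n\geq 2$ admit fixed nontrivial conjugacy classes (extractable from the bottom fixed stratum of any CT-representative), which should be cited from \cite{FH-11} rather than reproved. Beyond this, the argument is essentially a packaging of \thmref{ffc} together with the Bestvina--Feighn style obstruction to hyperbolicity coming from fixed conjugacy classes.
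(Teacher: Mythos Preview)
Your proof is correct and follows essentially the same route as the paper: both directions hinge on the equivalence $\mathcal{K}^*_i=\emptyset \Leftrightarrow \phi_i$ atoroidal, with the forward direction reducing to Theorem~\ref{hypnas} (the paper reproves the no-common-lamination step via the proper/filling case split, whereas you package this by invoking Theorem~\ref{hypffc} directly, which is slightly cleaner), and the converse using that a periodic conjugacy class yields $\mathbb{Z}\oplus\mathbb{Z}$. Your ``main obstacle'' case ($p=0$, $\mathcal{F}\neq\emptyset$) is in fact unnecessary: you already established at the outset that $\mathcal{K}^*_i\neq\emptyset$ implies $\phi_i$ is not atoroidal, hence (by rotationlessness) has a \emph{fixed} conjugacy class, so the $\mathbb{Z}\oplus\mathbb{Z}$ construction goes through without appealing to the UPG fixed-class fact.
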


\begin{proof} 
    Suppose that $\mathcal{K}^*_i = \emptyset$ for all $i\in \{1,\cdots, k \}$. 
    If none of the  $\phi_i$ fixes a vertex in $\FF$, then they are all  fully irreducibles which do not have a common power, and we are done by \cite[Theorem 5.2]{BFH-97}. 
    Suppose some $\phi_i$ fixes a vertex in $\FF$ and  $\mathcal{K}^*_i=\emptyset$
for all $i\in \{1,\cdots, k \}$. To  apply Theorem \ref{hypnas}  it is  enough to show that $\mathcal L^{\pm}(\phi_i)\cap  \mathcal L^{\pm}(\phi_j)=\emptyset$ for all $j\neq i$.

   Since $\mathcal{K}^*_i$ is trivial every conjugacy class in $\F$ is weakly attracted to some attracting lamination of $\phi_i$. Hence $\phi_i$ is hyperbolic (atoroidal) outer automorphism for all $i\in \{1,\cdots, k \}$. 
   
    \textbf{Case 1:} Let $\Lambda^+_i$ be an attracting lamination of $\phi_i$ such that $\mathcal{F}_{supp}(\Lambda^+_i)$ is a proper free factor. Let $j\neq i$. If $\phi_j$ is fully irreducible, then $\Lambda^+_i$ cannot be an attracting or repelling lamination of $\phi_j$ since $\phi_j$ has unique attracting and repelling laminations which fill. If $\phi_j$ is reducible, then $\Lambda^+_i$ cannot be  an attracting or repelling lamination of $\phi_j$ for otherwise $\mathcal{F}_{supp}(\Lambda^+_i)$ would be a common invariant free factor and this contradicts the fact that any vertex fixed by $\phi_i$ is distance at least $2$ from any vertex fixed by $\phi_j$.

    \textbf{Case 2:} Suppose that $\Lambda^+_i$ fills. Pick some $j\neq i$. Then $\Lambda^+_i$ is not an attracting lamination of $\phi_j$ for otherwise hypotheses of Proposition \ref{fillinglam} are satisfied, which contradicts with the fact that $\phi_i, \phi_j$ do not have a common power. Same contradiction occurs if $\Lambda^+_i$ is a repelling lamination of $\phi_j$ by replacing $\phi_j$ with $\phi_j^{-1}$ in Proposition \ref{fillinglam}.
     Therefore $\phi_i$ and $\phi_j$ do  not have any common attracting or repelling laminations.
     Now apply Theorem \ref{hypnas} to get hyperbolicity of $\F\rtimes \widehat{Q}$.

    The converse part easily follows from the  observation that hyperbolicity of $\F\rtimes \widehat{Q}$ implies $\phi_i$, $\phi_j$ are both hyperbolic outer automorphisms for all $i\neq j$, $i,j \in \{1,\cdots, k \} $ and using \cite[Lemma 3.1]{Gh-20}. 

\end{proof}

The following theorem gives a necessary and sufficient condition for $\F\rtimes \widehat{Q}$ to have a cusp preserving relative hyperbolic structure when $Q$ is generated by a collection of sufficiently different outer automorphisms. One should read the following result as a relative hyperbolic  analogue of Theorem \ref{suffdiffhyp}.
\begin{theorem}\label{RH}
 Let $\{\phi_1, \cdots, \phi_k\}$ be a collection of  pairwise sufficiently different   and exponentially growing outer automorphisms. Let $\mathcal{K}^*_i$ be the nonattracting sink of $\phi_i,\,\,i \in \{1, \cdots, k\}$ and assume that $\mathcal{K}^*_j\neq \emptyset$ for some fixed $j$. Then the following are equivalent:
 \begin{enumerate}
     \item $\F\rtimes \widehat{Q}$ has a cusp preserving relatively hyperbolic structure where $Q$ is a free group generated by sufficiently large exponents of $\phi_i$'s.
     \item $\mathcal{K}^*_i = \mathcal{K}^*_j$ for all $i$. 
 \end{enumerate}
\end{theorem}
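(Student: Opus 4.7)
The plan is to prove both implications by reducing to Theorem~\ref{main1}, using Lemma~\ref{rhconditions} as the structural engine in the forward direction and a direct verification of admissibility in the reverse direction.

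For $(2)\Rightarrow(1)$, I take $\mathcal{K} := \mathcal{K}^*_j$ and verify that it is a common admissible subgroup system for the $\phi_i$'s with pairwise disjoint $\mathcal{L}^{\pm}_{\mathcal{K}}$ sets, then invoke Theorem~\ref{main1}. Since $\mathcal{K}^*_i = \mathcal{K}^*_j$ for every $i$, Lemma~\ref{sinkprop}(1) immediately gives that $\mathcal{K}$ is admissible for each $\phi_i$. Generic leaves of any attracting lamination of $\phi_i$ grow exponentially and so are not carried by $\mathcal{K}^*_i$, whence $\mathcal{L}^{\pm}_{\mathcal{K}}(\phi_i) = \mathcal{L}^{\pm}(\phi_i)$. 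A common lamination $\Lambda \in \mathcal{L}^{\pm}(\phi_i) \cap \mathcal{L}^{\pm}(\phi_s)$ would have a $\phi_i$- and $\phi_s$-invariant free factor support: if proper, it provides a common fixed vertex in $\FF$, contradicting the sufficiently different hypothesis; if filling, Proposition~\ref{fillinglam} supplies a common power of $\phi_i$ and $\phi_s$, again a contradiction. Theorem~\ref{main1} then produces a relatively hyperbolic structure on $\F \rtimes \widehat{Q}$, with cusp-preservation automatic because each cyclic component of $\mathcal{K}^*_j$ is fixed (up to conjugation) by every $\phi_i$.

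For $(1)\Rightarrow(2)$, Theorem~\ref{main1} supplies a common admissible subgroup system $\mathcal{K}$ with pairwise disjoint $\mathcal{L}^{\pm}_{\mathcal{K}}$ sets. Property (5) of admissibility forces $\mathcal{K}$ to carry every conjugacy class not attracted to any attracting lamination of $\phi_i$, so $\mathcal{K}$ carries $\mathcal{K}^*_i$ for every $i$. The contrapositive of Proposition~\ref{nrh} then shows that whenever $\mathcal{K}^*_i \neq \emptyset$ and $s \neq i$, some attracting lamination $\Gamma^+_s$ of $\phi_s$ has $\mathcal{K}^*_i$ carried by $\mathcal{A}_{na}(\Gamma^+_s)$, which is precisely the hypothesis needed to invoke Lemma~\ref{rhconditions}. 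The crucial step is to show $\mathcal{K}^*_i \neq \emptyset$ for every $i$. Suppose not for some $i$. Since $\mathcal{K}^*_j \neq \emptyset$, Lemma~\ref{rhconditions}(b) supplies $[\langle g \rangle] \in \mathcal{K}^*_j$ where $g$ is not a proper power (by the malnormality argument in the proof of that lemma) and corresponds to an indivisible Nielsen path of a geometric EG stratum of $\phi_j$. By the contrapositive of Proposition~\ref{nrh} applied to $j$, some attracting lamination $\Gamma^+_i$ of $\phi_i$ has $\mathcal{A}_{na}(\Gamma^+_i)$ carrying $[g]$; by Lemma~\ref{rhconditions}(c), $[g]$ cannot lie in a free-factor component, so it must lie in a cyclic component. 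This forces $\Gamma^+_i$ to be geometric with a unique indivisible Nielsen path $c_i$, and $g$ is conjugate to a power of $c_i$; combined with primitivity of $g$, it follows that $g$ is conjugate to $c_i^{\pm 1}$. But then $[g]$ is fixed by $\phi_i$, i.e., $[\langle g \rangle] \in \mathcal{K}^*_i$, contradicting $\mathcal{K}^*_i = \emptyset$. With all sinks now nonempty, the same chain of deductions is symmetric in $i$ and $j$ and yields both $\mathcal{K}^*_j \subseteq \mathcal{K}^*_i$ and $\mathcal{K}^*_i \subseteq \mathcal{K}^*_j$, hence equality.

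The main obstacle is the rigid identification of $g$ with a power of the unique Nielsen path $c_i$: it pays off the combination of primitivity of $g$, the specific structure $\mathcal{A}_{na}(\Gamma^+_i) = \mathcal{F} \cup \{[\langle c_i \rangle]\}$ at a geometric lamination, and the free-factor obstruction of Lemma~\ref{rhconditions}(c). A secondary subtlety in $(2)\Rightarrow(1)$ is verifying disjointness of the $\mathcal{L}^{\pm}$ sets via Proposition~\ref{fillinglam}, since our automorphisms are in general only partially, rather than fully, irreducible.
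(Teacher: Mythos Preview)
Your proof is correct and follows the same overall architecture as the paper's (Theorem~\ref{main1} as the bridge, Proposition~\ref{nrh} in contrapositive form, and Lemma~\ref{rhconditions} as the structural engine). There are two points where you and the paper diverge, both worth noting.

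For $(2)\Rightarrow(1)$, the paper simply says the conclusion ``immediately follows from Theorem~\ref{main1}.'' You are more explicit, verifying the disjointness $\mathcal{L}^{\pm}(\phi_i)\cap\mathcal{L}^{\pm}(\phi_s)=\emptyset$ via the proper/filling dichotomy and Proposition~\ref{fillinglam}. This is the right thing to do, and your argument is sound (note that ``sufficiently different'' does supply hypothesis~(1) of Proposition~\ref{fillinglam}, since a common invariant free factor would be a common fixed vertex at distance~$0$). One small imprecision: your justification of cusp-preservation (``each cyclic component of $\mathcal{K}^*_j$ is fixed'') is correct in conclusion but the reason is simply admissibility condition~(3), which forces $\phi_i([K_s])=[K_s]$ for every component; the reduction to cyclic components is a consequence of sufficiently different (a free-factor component would be a common fixed vertex), not an input.

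For $(1)\Rightarrow(2)$, the paper, after establishing that all sinks are nonempty, invokes Proposition~\ref{suffdiff} to conclude that all the $\phi_i$ are geometric fully irreducibles induced by pseudo-Anosovs on the \emph{same} once-bounded surface, whence all sinks equal $\{[\langle\sigma\rangle]\}$. You bypass Proposition~\ref{suffdiff} entirely and argue the inclusions $\mathcal{K}^*_j\subseteq\mathcal{K}^*_i$ and $\mathcal{K}^*_i\subseteq\mathcal{K}^*_j$ directly, using that each element of $\mathcal{K}^*_j$ is root-free and, by Lemma~\ref{rhconditions}(c) together with the structure $\mathcal{A}_{na}(\Gamma^+_i)=\mathcal{F}\cup\{[\langle c_i\rangle]\}$, must coincide with the Nielsen class $[\langle c_i\rangle]$, hence be fixed by $\phi_i$ and land in $\mathcal{K}^*_i$. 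This is a legitimate shortcut: it recovers exactly the statement of the theorem without the stronger structural conclusion that the $\phi_i$ are all fully irreducible. The paper's route through Proposition~\ref{suffdiff} buys that extra rigidity; yours is more economical if only the equality of sinks is wanted.
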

 
    \begin{proof}
     Lemma \ref{pfifbf} ensures high enough $m_i$'s so that $Q = \langle \phi_1^{m_1}, \ldots \phi_k^{m_k}\rangle$ is a free group. If $(2)$ holds, then (1) immediately follows from Theorem \ref{main1}. 
     
     Suppose next that for a free group $Q$ generated by sufficiently large exponents of $\phi_i$'s, $\F\rtimes \widehat{Q}$ has a relatively hyperbolic structure where cusps are preserved. By Proposition \ref{nrh} $\mathcal{K}^*_j$ must be carried by nonattracting subgroup system of some  $\Lambda_i\in\mathcal{L}^+(\phi_i)$ for each $i \in \{1, \cdots, k\}$, $i\neq j$. Conclusion (b) of Lemma \ref{rhconditions} completes the proof.  
 
\end{proof}

\subsection{\textbf{Relatively hyperbolic extensions using partially fully  irreducibles}}\label{sec:app2}

In this section we will give examples of relatively hyperbolic extensions using specific types of exponentially growing outer automorphisms. Our first example is for an exponentially growing outer automorphism which is \emph{partially} fully irreducible.

 Let $\phi_1, \cdots, \phi_k$ be a collection of partially fully irreducibles, each with respect to a free factor $F^i$ with $[F^i] \neq [F^j]$ whenever $i\neq j$. We denote the corresponding dual attracting and repelling lamination pair of each $\phi_i$ by $\Lambda^+_i$ and $\Lambda^-_i$. By partial fully irreducibility we have $\mathcal{F}_{supp}(\Lambda^\pm_i) = [F^i]$. We use these conventions below.

\begin{lemma}\label{pfiasymp}
    Let $\phi_i$ be  partially fully irreducible on $F^i$, $i \in \{1, \cdots , k\}$ Then $\Lambda^\pm_i$ and $\Lambda^\pm_j$ have asymptotic leaves if and only if $[F^i] = [F^j]$.
\end{lemma}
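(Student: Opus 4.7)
The plan rests on the identity $\mathcal{F}_{supp}(\Lambda^\pm_i) = [F^i]$, available from partial fully irreducibility of $\phi_i$ with respect to $F^i$ (see Corollary 2.6.5 and Corollary 3.1.11 of \cite{BFH-00}). Both implications then reduce to tracking free factor supports through asymptotic leaves.

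For the $(\Rightarrow)$ direction, suppose generic leaves $\lambda_i \in \Lambda^{\epsilon_i}_i$ and $\lambda_j \in \Lambda^{\epsilon_j}_j$ share a common sub-ray $r$, with $\epsilon_i, \epsilon_j \in \{+,-\}$. Bi-recurrency of $\lambda_i$ forces every finite subpath of $\lambda_i$ to appear arbitrarily far along the end of $\lambda_i$ contained in $r$, hence to be a subpath of $\lambda_j$. Thus $\lambda_i$ lies in the weak closure of $\lambda_j$, and by symmetry $\Lambda^{\epsilon_i}_i = \Lambda^{\epsilon_j}_j$. Taking free factor supports yields $[F^i] = [F^j]$. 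This is the same bi-recurrency trick as in the proof of Lemma \ref{pfifbf}.

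For the $(\Leftarrow)$ direction, assume $[F^i] = [F^j]$ and pick representatives with $F^i = F^j = F$. Both restrictions $\phi_i|_F$ and $\phi_j|_F$ are fully irreducible in $\mathrm{Out}(F)$, and every leaf of $\Lambda^\pm_i$ or $\Lambda^\pm_j$ has both endpoints in $\partial F$. Since $F$ has rank at least $2$ it acts minimally on $\partial F$, so the $F$-invariant endpoint set of leaves of each lamination is non-empty with closure equal to all of $\partial F$. I would then upgrade this density to a genuinely common endpoint by showing that the projection of $\Lambda^\pm_i$ to $\partial F$ is in fact surjective, combining $\phi_i|_F$-invariance with the North--South dynamics of a fully irreducible on $\partial F$. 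Once surjective, any endpoint of a leaf of $\Lambda^\pm_j$ is automatically an endpoint of some leaf of $\Lambda^\pm_i$, producing the asymptotic pair.

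The main obstacle is the $(\Leftarrow)$ direction: passing from density of the two $F$-invariant endpoint sets in $\partial F$ to an actual shared endpoint. This requires the endpoint set to be closed in $\partial F$, which uses the specific attractor structure of a fully irreducible's attracting lamination (finitely many attracting fixed points modulo $F$-orbit, with the rest of $\partial F$ converging to them under forward iteration) rather than mere density of a single $F$-orbit.
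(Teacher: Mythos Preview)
Your forward direction is correct and essentially matches the paper's argument: both use birecurrency of generic leaves to push one leaf into the free factor support of the other. You go a bit further and conclude $\Lambda^{\epsilon_i}_i = \Lambda^{\epsilon_j}_j$ before taking supports, whereas the paper only observes that $\ell_i$ is carried by $[F^j]$; either route yields $[F^i] = [F^j]$. One small omission: you should note, as the paper does, that \emph{every} leaf of $\Lambda^\pm_i$ is generic (because $\phi_i|_{F^i}$ is fully irreducible and the lamination is minimal), so the hypothesis ``asymptotic leaves'' automatically gives you asymptotic \emph{generic} leaves.

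For the converse you have overlooked the standing convention recorded in the paragraph immediately preceding the lemma: the collection is chosen with $[F^i] \neq [F^j]$ whenever $i \neq j$. Under that convention, $[F^i] = [F^j]$ forces $i = j$, and then $\Lambda^\pm_i$ and $\Lambda^\pm_j$ are literally the same lamination pair. This is why the paper disposes of the converse with ``Converse direction is obvious.'' Your attempt to prove the converse in full generality (for distinct fully irreducibles sharing the same invariant free factor) is not needed here, and as you yourself flag, the surjectivity-of-endpoints step is not established: density of an $F$-invariant set in $\partial F$ does not by itself give a common endpoint, and the ``North--South dynamics'' you invoke acts on $\overline{CV}$ or on $\PML$-type spaces, not directly on $\partial F$ in a way that forces the leaf-endpoint set to be closed. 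So the gap you identify is real for the general statement---but it simply does not arise in the lemma as stated.
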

\begin{proof}
    Since the restriction of $\phi_i$ to $[F^i]$ is fully irreducible, every leaf of $\Lambda^+_i$ and of $\Lambda^-_i$ is generic. If $\ell_i\in \Lambda^+_i$ and $\ell_j\in \Lambda^+_j$ have a common end, birecurrency of generic leaves implies that $\ell_i$ is carried by $[F^j] = \mathcal{F}_{supp}(\Lambda^+_j)$. But since $ \mathcal{F}_{supp}(\Lambda^+_i) = \mathcal{F}_{supp}(\ell_i) = [F^i]$, we must have $[F^i] = [F^j]$. Converse direction is obvious.
\end{proof}

\begin{proposition}\label{dist2wat}
    Let $\phi_i$ be partially fully irreducible on  $F^i$, where $[F^i]\neq [F^j]$ whenever whenever $i\neq j$, $i,j \in \{1, \cdots , k\}$. Then, 
     \begin{enumerate}
         \item For all $ i \in \{1, \cdots , k\}$,  $[F^i]$ and free factor components of $\mathcal{A}_{na}(\Lambda^\pm_i)$ are close in the free factor complex $\FF$.
         \item For $i\neq j$, $\mathcal{A}_{na}(\Lambda^\pm_i)$ carries a leaf of $\Lambda^+_j$ if and only if $[F^j]$ is carried by $\mathcal{A}_{na}(\Lambda^\pm_i)$. Equivalently,  $\mathcal{A}_{na}(\Lambda^\pm_i)$ carries a leaf of $\Lambda^+_j$ if and only if $[F^j]$ is distance $\leq 1$ from some free factor component of $\mathcal{A}_{na}(\Lambda^\pm_i)$.
         \item If $[F^i]$ is at distance at least $2$ from every free factor component of $\mathcal{A}_{na}(\Lambda^\pm_j)$, then any generic leaf of $\Lambda^+_i$ is weakly attracted to $\Lambda^+_j$ under the action of $\phi_i$ $(\phi^{-1}_i)$, for all $i,j \in \{1, \cdots , k\},\,\, i\neq j$.
     \end{enumerate}

\end{proposition}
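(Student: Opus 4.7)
The plan is to prove the three parts sequentially, leveraging weak attraction theory together with the partial fully irreducibility hypothesis.

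For part (1), rotationlessness of $\phi_i$ forces every free factor component of $\mathcal{A}_{na}(\Lambda^\pm_i)$ to be $\phi_i$-invariant, and $[F^i]$ is $\phi_i$-invariant by hypothesis. Hence both $[F^i]$ and every free factor component of $\mathcal{A}_{na}(\Lambda^\pm_i)$ lie in the small displacement set $\mathcal{S}_{\phi_i}$ with zero displacement. Since $\phi_i|_{F^i}$ is fully irreducible, replacing $\phi_i$ by a sufficiently high power (which leaves the laminations, the nonattracting subgroup system and $[F^i]$ unchanged) pushes the translation length on the subfactor projection to $\FF(F^i)$ past the threshold $M_0$ of Lemma \ref{BGIT}. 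An application of Proposition \ref{diam} then bounds the diameter of $\mathcal{S}_{\phi_i}$ uniformly, proving (1).

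For part (2), the crucial observation is that every leaf of $\Lambda^+_j$ is generic with free factor support exactly $[F^j]$, since $\phi_j|_{F^j}$ is fully irreducible. For the forward direction, if $\mathcal{A}_{na}(\Lambda^\pm_i)$ carries a leaf $\ell$ of $\Lambda^+_j$, the carrying component must be a free factor component $[B]$ (the potential cyclic Nielsen-path component cannot carry a bi-infinite line), so $[F^j]=\mathcal{F}_{supp}(\ell)\sqsubset [B]$, giving $d_{\FF}([F^j],[B])\leq 1$. Conversely, if $[F^j]\sqsubset [B]$ for some free factor component $[B]\in \mathcal{A}_{na}(\Lambda^\pm_i)$, then every leaf of $\Lambda^+_j$ lies in $F^j$ and is therefore carried by $[B]$, hence by $\mathcal{A}_{na}(\Lambda^\pm_i)$.

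For part (3), let $\ell$ be a generic leaf of $\Lambda^+_i$. The hypothesis that $[F^i]$ is distance $\geq 2$ from every free factor component of $\mathcal{A}_{na}(\Lambda^\pm_j)$, together with the contrapositive of part (2) applied with $i,j$ swapped, shows that $\ell$ is not carried by $\mathcal{A}_{na}(\Lambda^\pm_j)$. By Lemma \ref{pfiasymp}, since $[F^i]\neq [F^j]$, no leaf of $\Lambda^+_i$ is asymptotic to any leaf of $\Lambda^\pm_j$, so in particular $\ell$ is not a generic leaf of $\Lambda^-_j$. The weak attraction theorem (Corollary 2.17 of \cite{HM-20}) then concludes that $\ell$ is weakly attracted to $\Lambda^+_j$ under $\phi_j$, and symmetric reasoning handles $\phi_j^{-1}$. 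The subtlest point is the equivalence claim in part (2): distance $\leq 1$ in $\FF$ in principle allows either inclusion direction, but a proper subfactor $[B]\sqsubsetneq [F^j]$ cannot carry generic leaves of $\Lambda^+_j$ (whose free factor support is exactly $[F^j]$), so the distance-$1$ case always reduces to $[F^j]\sqsubset [B]$, which is precisely what is invoked in part (3).
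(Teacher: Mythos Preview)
Your proof is correct overall, but part (1) takes a genuinely different route from the paper, and part (2) has a small imprecision worth flagging.

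\textbf{Part (1).} The paper argues directly that every representative of $[F^i]$ intersects every representative of any free factor component $[B]\in\mathcal{A}_{na}(\Lambda^\pm_i)$ trivially: if some nontrivial $[g]$ were carried by both, then since $\phi_i|_{F^i}$ is fully irreducible, $[g]$ would have to represent the unique closed indivisible Nielsen path of a geometric $\Lambda^+_i$, and such a class is never carried by a free factor component. Trivial intersection of free factors then gives a concrete small bound on $d_{\FF}([F^i],[B])$. Your approach via Proposition~\ref{diam} is also valid and more conceptual: both $[F^i]$ and each free factor component of $\mathcal{A}_{na}(\Lambda^\pm_i)$ are fixed by (a power of) $\phi_i$, hence sit in a small displacement set whose diameter is bounded. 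The trade-off is that your bound $D=D(C,\lambda)$ is less explicit than the paper's, and your argument invokes the Bounded Geodesic Image machinery, whereas the paper's is self-contained train-track reasoning.

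\textbf{Part (2).} Your argument matches the paper's, but the parenthetical ``the potential cyclic Nielsen-path component cannot carry a bi-infinite line'' is not literally true: the axis of the generator is a bi-infinite (periodic) line carried by $[\langle\sigma\rangle]$. The correct reason, which the paper states, is that every leaf of $\Lambda^+_j$ is generic and hence \emph{not a circuit}, so it cannot be the axis of a cyclic group. Your handling of the distance-$\leq 1$ ambiguity (ruling out $[B]\sqsubsetneq[F^j]$) is a good observation that the paper glosses over.

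\textbf{Part (3).} The paper simply cites \cite[Lemma 5.2]{Gh-23} and \cite[Remark 2.8]{HM-20}; your explicit argument via part (2), Lemma~\ref{pfiasymp}, and the weak attraction theorem is correct and in fact unpacks exactly what those citations contain.
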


\begin{proof}
    Fix $i$. To prove (1), we show that any representative of $[F^i]$ and any representative of any free factor component of $\mathcal{A}_{na}(\Lambda^\pm_i)$ intersect at identity only. If they are not, there is some nontrivial conjugacy class $[g]$ of $F^i$  which is carried also by $\mathcal{A}_{na}(\Lambda^\pm_i)$. Since $\phi_i$ restricted to $F^i$ is fully-irreducible, this implies that $\Lambda^+_i$ must be geometric and  $[g]$ must be a representative of the unique closed indivisible Nielsen path corresponding to $\Lambda^+_i$. But this implies that $[g]$ cannot be a free factor of $\F$ and hence cannot be carried by any free factor component of $\mathcal{A}_{na}(\Lambda^\pm_i)$. The contradiction proves our claim. 
    
    Since the restriction of each $\phi_i$ to $F^i$ is fully irreducible, every leaf of $\Lambda^+_i$ is generic and hence not a circuit. If $\ell\in \Lambda^+_i$ is any leaf which is carried by $\mathcal{A}_{na}(\Lambda^\pm_j)$, $i\neq j$, malnormality of $\mathcal{A}_{na}(\Lambda^\pm_j)$ implies that there must be some free factor conjugacy class $[H]\in \mathcal{A}_{na}(\Lambda^\pm_j)$ which carries $\Lambda^+_i$ (since the set of lines carried by $[H]$ is closed and $\overline{\ell} = \Lambda^+_i$). By definition of free factor support we then get $F^i \leq H$ up to conjugation, which is equivalent to saying that the distance between $[F^i]$ and $[H]$ in the free factor complex of $\F$ is $\leq 1$. The converse follows directly from definitions. 
    
    The proof of (3) is exactly the same as the proof of \cite[Lemma 5.2]{Gh-23} (the cited proof does not anywhere use the assumptions of being atoroidal on the automorphisms) or one can see \cite[Remark 2.8, page 208]{HM-20}.
\end{proof}

\begin{lemma}
	\label{relhyp4}
  Let $\psi\in\out$  is partially fully irreducible with respect to $F$.
  Then there exists a $\psi$-invariant, malnormal subgroup system $\mathcal{K} = \{[K_1], \ldots , [K_p]\}$ such that the extension group $\F\rtimes \langle \psi \rangle$  is strongly hyperbolic relative
  to the collection of subgroups $\{K_s{\rtimes_\Psi}_s \mathbb{Z}\}_{s=1}^p$, where $\Psi_s$ is a
  chosen lift of $\phi$ such that $\Psi_s(K_s)=K_s$.
 \end{lemma}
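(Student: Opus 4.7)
The plan is to apply Theorem \ref{main1} in its single-automorphism case ($k=1$) with the admissible subgroup system chosen to be the nonattracting sink of $\psi$. Since Theorem \ref{main1} is stated for rotationless automorphisms, I would first replace $\psi$ by a power $\psi^m$ which is rotationless; note that being partially fully irreducible with respect to $F$ is preserved under powers, and the restriction $\psi^m|_F$ remains fully irreducible on $F$. Proving strong relative hyperbolicity for $\F \rtimes \langle \psi^m \rangle$ with the right peripheral structure then suffices, because $\F \rtimes \langle \psi^m \rangle$ is a finite-index normal subgroup of $\F \rtimes \langle \psi \rangle$, and strong relative hyperbolicity is inherited by finite-index overgroups with peripheral subgroups extended accordingly from $K_s \rtimes_{\Psi^m_s} \mathbb{Z}$ to $K_s \rtimes_{\Psi_s} \mathbb{Z}$, where $\Psi_s$ is the lift of $\psi$ with $\Psi_s(K_s) = K_s$.

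The key verification is that $\mathcal{K} = \mathcal{K}^*_{\psi^m}$, the nonattracting sink from Definition \ref{sink}, is an admissible subgroup system for $\psi^m$. This is essentially Lemma \ref{sinkprop}(1), so I would identify each of the five conditions in the definition of admissibility: malnormality and componentwise $\psi^m$-invariance are given by Lemma \ref{sinkprop}(1) combined with the structural description $\mathcal{F} \cup \{[\langle a_i\rangle]\}$ in Lemma \ref{sinkprop}(4); full irreducibility of $\psi^m|_F$ produces an attracting lamination $\Lambda^+$ with $\mathcal{F}_{supp}(\Lambda^+) = [F]$, and neither $\Lambda^+$ nor its dual repelling lamination is carried by $\mathcal{K}^*_{\psi^m}$ by Lemma \ref{sinkprop}(2), which gives nonemptiness of $\mathcal{L}^\pm_\mathcal{K}(\psi^m)$; disjointness of attracting and repelling neighborhoods is arranged by enlarging the critical constant $C$ as discussed in Section \ref{sec:assump}; and the weak attraction condition is precisely Lemma \ref{sinkprop}(2). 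With admissibility in hand, Theorem \ref{main1} applied to the singleton collection $\{\psi^m\}$ (for which the ``no common power'' and ``disjoint lamination'' hypotheses are vacuous) produces some sufficiently large $N$ so that $\F \rtimes \langle \psi^{mN}\rangle$ is strongly hyperbolic relative to $\{K_s \rtimes_{\Psi^{mN}_s} \mathbb{Z}\}_{s=1}^p$.

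The main technical obstacle is the extension step from $\langle \psi^{mN}\rangle$ back to $\langle \psi \rangle$: the conclusion of the lemma requires $\psi$ itself (not merely $\psi^{mN}$) to preserve each $K_s$ individually, so that the peripheral subgroups $K_s \rtimes_{\Psi_s}\mathbb{Z}$ are well defined. To handle this, I would enlarge $m$ by the order of the permutation that $\psi$ induces on the finite set of components of $\mathcal{K}^*_{\psi^m}$, which forces componentwise $\psi$-invariance; partial full irreducibility with respect to a single $F$ controls this permutation since $\psi$ fixes $[F]$ up to conjugacy, and the Nielsen-loop components $[\langle a_i\rangle]$ in the sink are permuted only within a bounded orbit. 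Once componentwise invariance is secured, the standard preservation of strong relative hyperbolicity under finite-index extensions (in the vein of Drutu--Sapir) produces the stated peripheral structure for $\F \rtimes \langle \psi \rangle$, finishing the proof.
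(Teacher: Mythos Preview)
Your approach and the paper's differ in two respects. The paper takes $\mathcal{K} = \mathcal{A}_{na}(\Lambda^+)$, the nonattracting subgroup system of the single attracting lamination with $\mathcal{F}_{supp}(\Lambda^+) = [F]$, and then invokes the main result of \cite{Gh-23} directly for the cyclic extension $\F \rtimes \langle \psi \rangle$. You instead take $\mathcal{K} = \mathcal{K}^*_{\psi^m}$, the nonattracting sink of a rotationless power, route through Theorem~\ref{main1} in the degenerate case $k=1$, and then transfer up by finite index. The paper's route is shorter because \cite{Gh-23} already treats the cyclic case without the ``sufficiently high power'' clause that Theorem~\ref{main1} carries, so no rotationless replacement and no finite-index step are needed. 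Either choice of $\mathcal{K}$ is acceptable for the bare existence claim in the lemma, though they will in general give different peripheral structures.

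Your finite-index step has a gap. You assert that enlarging $m$ by the order of the permutation that $\psi$ induces on the components of $\mathcal{K}^*_{\psi^m}$ ``forces componentwise $\psi$-invariance,'' but this only arranges that $\psi^m$ fixes each component, not $\psi$ itself. The peripheral subgroups in the lemma are $K_s \rtimes_{\Psi_s} \mathbb{Z}$ with $\Psi_s$ a lift of $\psi$, so you need each $[K_s]$ to be $\psi$-invariant, not merely $\psi^m$-invariant. The standard passage of strong relative hyperbolicity to a finite-index overgroup would instead yield peripherals of the form $K_s \rtimes \langle \Psi^{r_s} \rangle$ where $r_s$ is the $\psi$-period of $[K_s]$; that is still a valid relatively hyperbolic structure on $\F \rtimes \langle \psi \rangle$, but not literally the one asserted unless you separately show the permutation is trivial. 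Your remark that ``partial full irreducibility with respect to a single $F$ controls this permutation'' does not do this: $\psi$ fixing $[F]$ says nothing about how $\psi$ acts on the other components of the sink.
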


  \begin{proof}
  If $\psi$ is partially fully irreducible with respect to $F$, then restriction of $\psi$ to $F$ is fully irreducible and hence there exists an attracting lamination $\Lambda^+$ supported by $[F]$.

  Let $\mathcal{K}$ be the nonattracting subgroup system associated to $\Lambda^+$. Now apply main result of \cite{Gh-23}  with $\mathcal{K}$ and $\psi$.
  \end{proof}

\begin{theorem}\label{pfirh}
       Let $\phi_i$ be   partially fully irreducible on  $F^i$, and $[F^i]\neq [F^j]$, $i,j \in \{1, \cdots , k\}$. Suppose that $[F^i]$ is distance $\geq 2$ from every free-factor component of $\mathcal{A}_{na}(\Lambda^\pm_j)$ for all $i = 1, \cdots ,k$ and $i\neq j$; if such free factor components exist. 
      Then there exists $M > 0$ such that for every $m_i\geq M$, $Q = \langle \phi_1^{m_1},\cdots,  \phi_k^{m_k}\rangle$ is a free group of rank $k$.

      Moreover, if $\mathcal{A}_{na}(\Lambda^\pm_1) = \mathcal{A}_{na}(\Lambda^\pm_2)= \cdots =\mathcal{A}_{na}(\Lambda^\pm_k)$, then for any lift $\widehat{Q}$ of $Q$,  $\F\rtimes \widehat{Q}$ is relatively hyperbolic group.
\end{theorem}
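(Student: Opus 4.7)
The proof will proceed by applying the earlier Theorem~\ref{main1}, with the common subgroup system $\mathcal{K} = \mathcal{A}_{na}(\Lambda^\pm_1) = \cdots = \mathcal{A}_{na}(\Lambda^\pm_k)$ playing the role of the admissible subgroup system. Before invoking that theorem, we first address freeness. By Lemma~\ref{pfiasymp}, since $[F^i]\neq [F^j]$ for $i\neq j$, no generic leaf of $\Lambda^\pm_i$ is asymptotic to any generic leaf of $\Lambda^\pm_j$. Applying the Detecting~$\F_2$ via laminations result (\cite[Lemma~3.4.2]{BFH-00} together with Corollary~2.17 of \cite{HM-20}) inductively, we obtain $M>0$ such that for all $m_i\geq M$, the group $Q = \langle \phi_1^{m_1}, \ldots, \phi_k^{m_k}\rangle$ is free of rank $k$. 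Also, no pair $\phi_i, \phi_j$ can have a common power when $i\neq j$, since sharing a power would force $\Lambda^+_i = \Lambda^+_j$, contradicting $[F^i]\neq[F^j]$.

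For the relative hyperbolicity conclusion, my plan is to verify that $\mathcal{K}$ is admissible for each $\phi_i$ and that the lamination sets $\mathcal{L}^\pm_{\mathcal{K}}(\phi_i)$ are pairwise disjoint, then apply Theorem~\ref{main1}. The admissibility conditions follow directly from general lamination theory: malnormality of $\mathcal{K}$ is provided by Lemma~\ref{NAS}(1); each component of $\mathcal{K}$ is $\phi_i$-invariant because $\mathcal{K} = \mathcal{A}_{na}(\Lambda^\pm_i)$ and $\phi_i$ is rotationless; nonemptyness of $\mathcal{L}^\pm_{\mathcal{K}}(\phi_i)$ holds because $\Lambda^\pm_i$ itself lies in this set (generic leaves of $\Lambda^\pm_i$ are not carried by $\mathcal{A}_{na}(\Lambda^\pm_i)$); disjointness of attracting and repelling neighborhoods $V^+\cap V^-=\emptyset$ is the standard consequence of duality as recalled in the preliminaries; and the weak attraction property is immediate from Lemma~\ref{NAS}(2) since $\mathcal{K}=\mathcal{A}_{na}(\Lambda^+_i)$ is precisely the system carrying the conjugacy classes not attracted to $\Lambda^+_i$.

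The key remaining task—and what I expect to be the main obstacle—is to show $\mathcal{L}^\pm_{\mathcal{K}}(\phi_i) \cap \mathcal{L}^\pm_{\mathcal{K}}(\phi_j) = \emptyset$ whenever $i\neq j$. The difficulty is that partial fully irreducibility of $\phi_i$ on $F^i$ does not by itself rule out additional attracting laminations of $\phi_i$ beyond $\Lambda^+_i$. Suppose $\Gamma$ lies in both $\mathcal{L}^+_{\mathcal{K}}(\phi_i)$ and $\mathcal{L}^+_{\mathcal{K}}(\phi_j)$. Proposition~\ref{dist2wat}(3), applied with the distance-$\geq 2$ hypothesis, implies that generic leaves of $\Lambda^+_i$ are weakly attracted to $\Lambda^+_j$ under $\phi_j$ and vice versa, so in particular $\Lambda^+_i$ and $\Lambda^+_j$ are distinct as elements of the two lamination lists. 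The free factor support of a shared $\Gamma$ would be invariant under both $\phi_i$ and $\phi_j$; Proposition~\ref{dist2wat}(2) then forces this support to be close to components of $\mathcal{K}$ for both automorphisms, and malnormality of $\mathcal{K}$ together with the distance hypothesis between $[F^i],[F^j]$ and the free factor components of $\mathcal{K}$ rules out this possibility. The remaining subtlety—that generic leaves of $\Gamma$ are not carried by $\mathcal{K}$—means $\Gamma$ contains $\Lambda^+_i$ via the attraction argument, collapsing $\Gamma$ to essentially $\Lambda^+_i$, and a symmetric argument for $\phi_j$ yields the contradiction.

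Once the hypotheses of Theorem~\ref{main1} are verified, we obtain that $\F\rtimes\widehat{Q}$ is strongly hyperbolic relative to the collection $\{K_s\rtimes\widehat{Q}_s\}$, where the $K_s$ are the components of $\mathcal{K}$ and $\widehat{Q}_s$ is a lift of $Q$ stabilizing $K_s$. This completes the proof. The partial fully irreducible case thus yields a concrete family of relatively hyperbolic extensions produced by sufficiently high powers, analogous in spirit to Lemma~\ref{relhyp4} but now for multiply-generated $Q$.
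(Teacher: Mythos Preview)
Your freeness argument (Lemma~\ref{pfiasymp} for non-asymptotic leaves, then \cite[Lemma~3.4.2]{BFH-00}) and your choice $\mathcal{K}=\mathcal{A}_{na}(\Lambda^\pm_1)$ for the admissible system match the paper exactly. The paper's proof is in fact terser than yours: after recording from Lemma~\ref{pfiasymp} and Proposition~\ref{dist2wat} that the $\Lambda^\pm_i$ have pairwise disjoint neighborhoods and that generic leaves of $\Lambda^+_i$ are weakly attracted to $\Lambda^+_j$, it concludes in one line, ``let $\mathcal{K}=\mathcal{A}_{na}(\Lambda^\pm_1)$ and apply Theorem~\ref{main1},'' without spelling out the admissibility or disjointness checks.

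Where your proposal diverges is the third paragraph, and there the argument has a genuine gap. You correctly flag that $\mathcal{L}^\pm_{\mathcal{K}}(\phi_i)$ could in principle contain attracting laminations of $\phi_i$ other than $\Lambda^\pm_i$, and that Theorem~\ref{main1} formally requires the full sets $\mathcal{L}^\pm_{\mathcal{K}}(\phi_i)$ and $\mathcal{L}^\pm_{\mathcal{K}}(\phi_j)$ to be disjoint. But your resolution does not hold up. Proposition~\ref{dist2wat}(2) is a statement specifically about the distinguished laminations $\Lambda^+_j$ with support $[F^j]$; it gives no control over $\mathcal{F}_{supp}(\Gamma)$ for an arbitrary extra lamination $\Gamma$ of $\phi_i$, so the step ``forces this support to be close to components of $\mathcal{K}$'' is unsupported. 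Likewise, the claim that attraction makes ``$\Gamma$ contain $\Lambda^+_i$ \ldots collapsing $\Gamma$ to essentially $\Lambda^+_i$'' is not justified: weak attraction of a generic leaf of $\Gamma$ to $\Lambda^+_i$ does not yield $\Lambda^+_i\subset\Gamma$, and even containment of laminations does not give equality. The paper simply does not engage with this issue; if you wish to be more rigorous than the paper here, you will need a different argument (for instance, one that actually pins down $\mathcal{L}^+_{\mathcal{K}}(\phi_i)$ in this setting), not the one you have sketched.
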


\begin{proof}

 Lemma \ref{pfiasymp} tells us that  $\Lambda^\pm_i$ and $\Lambda^\pm_j$ do not have any asymptotic leaves. This implies that we can choose long generic-leaf segments of $\Lambda^+_i$ and $\Lambda^-_i$ and construct attracting and repelling neighbourhoods $V^+_i$ and $V^-_i$ for each $i = 1, \cdots ,k$ so that $V^+_i, V^-_i, V^+_j, V^-_j$ are pairwise disjoint for $i\neq j$. 
 
    Since generic leaves of $\Lambda^+_i$ are not carried by $\mathcal{A}_{na}(\Lambda^\pm_j)$ for $i\neq j$, by Proposition \ref{dist2wat} generic leaves of $\Lambda^+_i$ are weakly attracted to $\Lambda^+_j$ (resp. to $\Lambda^-_j$) under the action of $\phi_j$ (resp. of $\phi_j^{-1}$). 
    
    The conclusion about $Q$ being free group now follows directly using \cite[Lemma 3.4.2, page 551]{BFH-00}.
  For the relative hyperbolicity of  $\F\rtimes \widehat{Q}$, let $\mathcal{K} = \mathcal{A}_{na}(\Lambda^\pm_1)$ and apply  of Theorem \ref{main1}.
\end{proof}
Note that the above theorem still remains true if we relax the requirement of $\mathcal{A}_{na}(\Lambda^\pm_1) = \mathcal{A}_{na}(\Lambda^\pm_2)= \cdots =\mathcal{A}_{na}(\Lambda^\pm_k)$ and replace it by the requirement of $Q$ having an admissible subgroup system.  

\subsection{Relatively hyperbolic extensions using relative fully irreducibles}

 Given a collection of free factors $F^1, F^2,...., F^p$ of $\F$ such that $\F= F^1\ast F^2\ast ... \ast F^p \ast B$ with $B$ possibly trivial,
we say that the collection forms a \emph{free factor system}, written as  $\mathcal{F}:=\{[F^1], [F^2],...., [F^p]\}$ and we say that
a conjugacy class $[c]$ of a word $c\in\F$ is carried by $\mathcal{F}$ if there exists some $1\leq s \leq p$ and a representative $H^s$ of $F^s$
such that $c\in H^s$.
\begin{definition}\label{def}
    We say that an automorphism $\phi\in\out$ is \emph{fully irreducible relative to } a free factor system $\mathcal{F}$ if $\mathcal{F}$ is $\phi$-invariant and there is no proper $\phi$-invariant free factor system  $\mathcal{F}'$ such that $\mathcal{F}\sqsubset \mathcal{F}'$.
\end{definition}

Relative fully irreducibles were developed by Handel and Mosher (\cite{HM-20})  to find a better analog of Ivanov's theorem on subgroups of mapping class groups of a surface so that the analogous theorem for subgroups of $\out$ works ``inductively'' on free factors that are fixed and that the behavior of the automorphism in between free factors is better understood. Below we will describe how  subgroups made of relative fully irreducibles give us easy ways to construct relatively hyperbolic extensions, under some mild restrictions on the generators. 

 Let $\mathcal{F} \sqsubset \mathcal{F}'$ be free factor systems, where $\mathcal{F}$ is a proper free factor system. If there exists a marked graph $G$ and realizations $H \subset H' \subset G$ of these free factor systems such that $H'\setminus H$ is a single edge, then $\mathcal{F}'$ is said to be an one-edge extension of $\mathcal{F}$. If no such realization exist, then $\mathcal{F}'$ is said to be a \emph{multi-edge extension} of $\mathcal{F}$.

We record the following lemma which follows from various nontrivial results in \cite{HM-20} and use the conclusion as our working definition of relative fully irreducible outer automorphism.

\begin{lemma}\label{firel}
 Suppose $\mathcal{F}\sqsubset \{[\F]\}$ is a multi-edge extension invariant under $\phi$ and every component of $\mathcal{F}$ is  $\phi-$invariant.
 If $\phi$ is fully irreducible rel $\mathcal{F}$ then there exists $\phi-$invariant dual lamination pair $\Lambda^\pm_\phi$ such that the following hold:

 \begin{enumerate}
 \item $\mathcal{F}_{supp}(\mathcal{F}, \Lambda^\pm) = [\F]$.
  \item If $\Lambda^\pm_\phi$ is nongeometric then  $\mathcal{A}_{na}(\Lambda^\pm_\phi)=\mathcal{F}$.
  \item If $\Lambda^\pm_\phi$ is geometric then there exists a root free $\sigma\in\F$ such that
  $\mathcal{A}_{na}(\Lambda^\pm_\phi) = \mathcal{F}\cup \{[\langle \sigma \rangle]\}$.
 \end{enumerate}

 Conversely, if there exists a $\phi-$invariant dual lamination pair such that if (1) and (2) hold  or if (1) and (3) hold then $\phi$ is fully irreducible rel $\mathcal{F}$.

\end{lemma}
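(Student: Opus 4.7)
\textbf{Proof plan for Lemma \ref{firel}.} The plan is to invoke the existence of a CT representative for $\phi$ realizing the filtration coming from $\mathcal{F}$, identify a canonical top stratum, and read off the attracting/repelling lamination pair from it; then the three listed properties follow from the standard structural results for CTs and for nonattracting subgroup systems, together with the geometric model analysis of Handel--Mosher when the lamination is geometric.

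First, since $\phi$ is rotationless (this is implicit in working with CTs; if not we pass to a rotationless power and note that the lamination pair, free factor supports, and nonattracting subgroup system are preserved) and $\mathcal{F}$ is $\phi$-invariant with every component $\phi$-invariant, we invoke the CT existence theorem to obtain a CT map $f\colon G\to G$ representing $\phi$ with an $f$-invariant filtration in which some filtration element $G_{r-1}$ realizes $\mathcal{F}$. Because $\mathcal{F}\sqsubset\{[\F]\}$ is a \emph{multi-edge} extension, the stratum $H_r = G\setminus G_{r-1}$ (which we may take to be the topmost stratum by refining the filtration) cannot consist of a single NEG edge; combined with the fact that $\phi$ is fully irreducible rel $\mathcal{F}$, so there is no intermediate $\phi$-invariant free factor system strictly between $\mathcal{F}$ and $[\F]$, this forces $H_r$ to be a single EG stratum whose associated attracting lamination $\Lambda^+$ is topmost with respect to the filtration. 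Its dual repelling lamination $\Lambda^-$ (obtained from a CT representing $\phi^{-1}$ in the natural way) is paired with $\Lambda^+$, and both are $\phi$-invariant because $\phi$ is rotationless.

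For property (1), the generic leaves of $\Lambda^\pm$ have height $r$, so any free factor system carrying both $\mathcal{F}$ and $\Lambda^\pm$ must carry all edges of the graph $G$, forcing $\mathcal{F}_{supp}(\mathcal{F},\Lambda^\pm) = [\F]$. For properties (2) and (3), we appeal directly to the Handel--Mosher analysis of $\mathcal{A}_{na}(\Lambda^\pm)$ summarized in Lemma \ref{NAS}: it is a malnormal subgroup system, it is a free factor system precisely when $\Lambda^+$ is nongeometric, and by the minimality properties of nonattracting subgroup systems together with the fact that every conjugacy class carried by $\mathcal{F}$ is not weakly attracted to $\Lambda^+$, we get $\mathcal{F}\sqsubset\mathcal{A}_{na}(\Lambda^\pm)$. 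In the nongeometric case, if $\mathcal{A}_{na}(\Lambda^\pm)$ strictly extended $\mathcal{F}$ then $\mathcal{A}_{na}(\Lambda^\pm)$ would be a proper $\phi$-invariant free factor system (being a free factor system in this case) strictly above $\mathcal{F}$, contradicting fully irreducibility rel $\mathcal{F}$; hence $\mathcal{A}_{na}(\Lambda^\pm)=\mathcal{F}$, which is (2). In the geometric case, the geometric model supplies a unique closed indivisible Nielsen path $\rho_r$ of height $r$; the corresponding conjugacy class $[\sigma]$ (chosen root-free) gives the extra cyclic component, and the work cited in the paragraph following Lemma~\ref{NAS} shows $\mathcal{A}_{na}(\Lambda^\pm) = \mathcal{F}'\cup\{[\langle\sigma\rangle]\}$ with $\mathcal{F}\sqsubset\mathcal{F}'$; a second application of fully irreducibility rel $\mathcal{F}$ forces $\mathcal{F}'=\mathcal{F}$, giving (3).

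For the converse, suppose a $\phi$-invariant dual lamination pair $\Lambda^\pm$ satisfies (1) and either (2) or (3). If $\phi$ were not fully irreducible rel $\mathcal{F}$, there would exist a proper $\phi$-invariant free factor system $\mathcal{F}'$ with $\mathcal{F}\sqsubsetneq\mathcal{F}'\sqsubsetneq\{[\F]\}$. Every generic leaf of $\Lambda^\pm$ is then either carried by $\mathcal{F}'$ or not. If carried, then $\mathcal{F}_{supp}(\Lambda^\pm)\sqsubset\mathcal{F}'$ and hence $\mathcal{F}_{supp}(\mathcal{F},\Lambda^\pm)\sqsubset\mathcal{F}'\sqsubsetneq\{[\F]\}$, contradicting (1). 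If not carried, then no leaf of $\Lambda^+$ is weakly attracted into the realization of $\mathcal{F}'$, and (2)/(3) allow us to enlarge $\mathcal{F}'$ to a $\phi$-invariant subgroup system above $\mathcal{A}_{na}(\Lambda^\pm)$; since $\mathcal{A}_{na}(\Lambda^\pm)$ is either exactly $\mathcal{F}$ or $\mathcal{F}\cup\{[\langle\sigma\rangle]\}$, the meet $\mathcal{F}'\wedge\mathcal{A}_{na}(\Lambda^\pm)$ strictly extends $\mathcal{F}$ and is a proper $\phi$-invariant free factor system, again contradicting (1) after taking free factor support. The step I expect to be the most delicate is the precise identification, in the geometric case, that the cyclic part of $\mathcal{A}_{na}(\Lambda^\pm)$ is generated by a root-free element and that no additional free factor components appear; this requires the full force of the geometric model and the uniqueness of the closed indivisible Nielsen path of height $r$, but it is entirely contained in the Handel--Mosher framework cited above.
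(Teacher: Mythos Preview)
Your forward direction is essentially correct and takes a more hands-on route than the paper. The paper simply chooses an improved relative train track map realizing $\mathcal{F}$ at $G_{r-1}$, invokes \cite[Proposition 2.2]{HM-20} as a black box to obtain all of (1)--(3) for some iterate $\phi^k$, and then argues $\phi$-invariance of $\Lambda^\pm$ separately by contradiction (if $\phi(\Lambda^+)\neq\Lambda^+$ then $\phi(\Lambda^+)$ would be carried by $G_{r-1}$, hence by $\mathcal{A}_{na}(\Lambda^+)$, which is absurd). Your approach of identifying the top EG stratum directly and reading off (1)--(3) from Lemma~\ref{NAS} and the geometric model is fine, though you should be a little more careful that ``multi-edge extension plus fully irreducible rel $\mathcal{F}$'' really forces a \emph{single} EG stratum above $G_{r-1}$; zero strata and enveloped NEG edges can occur, so one needs \cite[Proposition 2.2 or 2.3]{HM-20} or an equivalent to pin this down cleanly.

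Your converse argument has a genuine gap in Case~2. The sentence ``no leaf of $\Lambda^+$ is weakly attracted into the realization of $\mathcal{F}'$'' has the direction reversed: what you want is that no conjugacy class carried by $\mathcal{F}'$ is weakly attracted to $\Lambda^+$ (because $\mathcal{F}'$ is $\phi$-invariant and the set of lines it carries is closed, so attraction would force the generic leaf into $\mathcal{F}'$, reducing to Case~1). Granting that, Lemma~\ref{NAS}(3) gives $\mathcal{F}'\sqsubset\mathcal{A}_{na}(\Lambda^\pm)$. Your next step---``the meet $\mathcal{F}'\wedge\mathcal{A}_{na}(\Lambda^\pm)$ strictly extends $\mathcal{F}$''---is simply false in the nongeometric case: there $\mathcal{A}_{na}(\Lambda^\pm)=\mathcal{F}$, so the meet is $\mathcal{F}$ itself. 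The correct conclusion is the opposite one: $\mathcal{F}'\sqsubset\mathcal{A}_{na}(\Lambda^\pm)$ together with (2) or (3) forces $\mathcal{F}'\sqsubset\mathcal{F}$ (in the geometric case one also uses that a proper free factor cannot be carried by the cyclic component $[\langle\sigma\rangle]$, since $\sigma$ is not a basis element), and this contradicts $\mathcal{F}\sqsubsetneq\mathcal{F}'$ directly---not (1). The paper sidesteps all of this by citing the case analysis in the proof of \cite[Theorem I]{HM-20}.
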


\begin{proof}

Let $f: G\to G$ be an improved relative train track map representing $\phi$ and $G_{r-1}$ be the filtration element realizing $\mathcal{F}$.
 Apply \cite[Proposition 2.2]{HM-20}  to get all the conclusions for some iterate $\phi^k$ of $\phi$.

 We claim that $\Lambda^\pm_\phi$ obtained by applying \cite[Proposition 2.2]{HM-20} must be $\phi-$invariant.
 Otherwise, by  the definition of being fully irreducible relative to $\mathcal{F}$ 
 $\phi(\Lambda^+_\phi)$ will be an attracting lamination which is properly contained in $G_{r-1}$ and hence is carried by $\mathcal{F}$ which in turn
 is carried by the nonattracting subgroup system for $\Lambda^+_\phi$. This is a contradiction, hence $\Lambda^+_\phi$ is $\phi-$invariant. Similar arguments
 work for $\Lambda^-_\phi$.

 The converse part follows from  the case analysis in the proof of \cite[Theorem I, pages 18-19]{HM-20}.
\end{proof}



\begin{theorem}
    Let $\phi, \psi$ be fully-irreducible relative to multi-edge extension $\mathcal{F}$, with corresponding invariant lamination pairs $\Lambda^\pm_\phi, \Lambda^\pm_\psi$ (as in the equivalence condition \ref{firel}). If no leaf of  $\Lambda^+_\phi \cup \Lambda^-_\phi$ is asymptotic to any leaf of  $\Lambda^+_\psi \cup \Lambda^-_\psi$, then there exists an integer $M \geq 1$ such that

  \begin{enumerate}
  \item $Q = \langle \phi^m, \psi^n \rangle$ is free group of rank $2$ for all $m, n \geq M$.
  \item If $\Lambda^\pm_\phi, \Lambda^\pm_\psi$ are both nongeometric then the extension group $\F\rtimes \widehat{Q}$ is hyperbolic relative to the finite collection of subgroups $\{F_s\rtimes \widehat{Q}_s\}_{s=1} ^p$, where $\widehat{Q}_s$ is a lift that preserves $F_s$.
  \item If both $\Lambda^\pm_\phi$ and  $\Lambda^\pm_\psi$ are geometric laminations which come from the same surface, both fixing the conjugacy class $[\sigma]$ representing the surface boundary, then $\F\rtimes \widehat{Q}$ is hyperbolic relative to the finite collection of subgroups $\{\langle \sigma \rangle \rtimes \widehat{Q}_\sigma\}\cup\{F_s\rtimes \widehat{Q}_s\}_{s=1} ^p$, where $\widehat{Q}_s$ is a lift that preserves $F_s$ and $\widehat{Q}_\sigma$ is a lift that fixes $\sigma$.
  \end{enumerate}

\end{theorem}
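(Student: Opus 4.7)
The strategy is to apply Theorem \ref{main1} for parts (2) and (3), using Lemma \ref{firel} to identify the nonattracting subgroup systems of the relative fully irreducible automorphisms with the prescribed peripheral collections, and to apply the standard ping-pong argument (Lemma 3.4.2 of \cite{BFH-00}) for part (1). The parameter $M$ will be chosen large enough both to pass to rotationless powers of $\phi$ and $\psi$ (so that each component of $\mathcal{F}$, and the class $[\sigma]$ in the geometric case, is fixed) and to satisfy the ping-pong requirement.

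For part (1), the hypothesis that no leaf of $\Lambda^+_\phi \cup \Lambda^-_\phi$ is asymptotic to any leaf of $\Lambda^+_\psi \cup \Lambda^-_\psi$, combined with the standing fact that a dual lamination pair of a single automorphism has no asymptotic leaves, lets me choose generic-leaf segments long enough to produce four pairwise disjoint neighborhoods $V^+_\phi, V^-_\phi, V^+_\psi, V^-_\psi$ in the space of lines. Observe also that $\phi$ and $\psi$ cannot share a common power, since any shared lamination would force a leaf to be asymptotic to itself across the two collections, contradicting the hypothesis. Feeding these four disjoint neighborhoods into the standard ping-pong lemma (Lemma 3.4.2 of \cite{BFH-00}) yields $Q = \langle \phi^m, \psi^n \rangle$ free of rank 2 for all $m, n \geq M$.

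For parts (2) and (3), set $\mathcal{K} = \mathcal{F}$ in the nongeometric case and $\mathcal{K} = \mathcal{F} \cup \{[\langle \sigma \rangle]\}$ in the geometric case. By Lemma \ref{firel}, in both cases $\mathcal{A}_{na}(\Lambda^\pm_\phi) = \mathcal{K} = \mathcal{A}_{na}(\Lambda^\pm_\psi)$; in case (3) the common value uses the hypothesis that both lamination pairs come from the same surface and that the boundary class $[\sigma]$ is fixed by both automorphisms. I then want to verify that $\mathcal{K}$ is a common admissible subgroup system for $\phi$ and $\psi$ and that $\mathcal{L}^\pm_\mathcal{K}(\phi) \cap \mathcal{L}^\pm_\mathcal{K}(\psi) = \emptyset$, so Theorem \ref{main1} applies and yields the stated peripheral structure $\{F_s \rtimes \widehat{Q}_s\}$ (respectively augmented by $\langle \sigma \rangle \rtimes \widehat{Q}_\sigma$).

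Verifying the admissibility conditions is where the real work sits, though each one is short. Malnormality of $\mathcal{K}$ is automatic from Lemma \ref{lem:na}(1) applied to $\mathcal{A}_{na}(\Lambda^\pm_\phi)$; component-wise invariance follows from the definition of fully irreducible rel $\mathcal{F}$ together with the fixedness of $\sigma$ after passing to rotationless powers; the nonemptiness of $\mathcal{L}^\pm_\mathcal{K}$ holds because $\mathcal{F}_{supp}(\mathcal{F}, \Lambda^\pm_\phi) = [\F]$ while $\mathcal{K}$ is proper, forcing $\Lambda^\pm_\phi \in \mathcal{L}^\pm_\mathcal{K}(\phi)$ (and similarly for $\psi$); disjointness of $V^+$ and $V^-$ within a single automorphism is inherited from the dual-pair property. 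The step I expect to be the most delicate is condition (5) in the definition of admissibility, that every conjugacy class not carried by $\mathcal{K}$ is weakly attracted to some element of $\mathcal{L}^+_\mathcal{K}(\phi)$; this reduces precisely to Lemma \ref{lem:na}(2), since the identification $\mathcal{A}_{na}(\Lambda^+_\phi) = \mathcal{K}$ means a conjugacy class fails to be carried by $\mathcal{K}$ exactly when it is weakly attracted to $\Lambda^+_\phi$. Finally, $\mathcal{L}^\pm_\mathcal{K}(\phi) = \{\Lambda^\pm_\phi\}$ and $\mathcal{L}^\pm_\mathcal{K}(\psi) = \{\Lambda^\pm_\psi\}$ are singletons (all other laminations of $\phi$ are carried by $\mathcal{F}$), and equality would again produce shared, hence self-asymptotic, leaves, contradicting the hypothesis; so the cross disjointness needed for Theorem \ref{main1} holds. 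Applying that theorem to $\mathcal{K}$ then closes out both (2) and (3).
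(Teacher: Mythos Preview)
Your proposal is correct and follows essentially the same route as the paper: apply Theorem~\ref{main1} with $\mathcal{K}=\mathcal{F}$ in the nongeometric case and $\mathcal{K}=\mathcal{F}\cup\{[\langle\sigma\rangle]\}$ in the geometric case, after identifying these with the common nonattracting subgroup system via Lemma~\ref{firel}. The paper's own proof is extremely terse (and its labels ``(1)'' and ``(2)'' appear to be typos for ``(2)'' and ``(3)''), whereas you spell out the admissibility verification and handle part~(1) separately via the ping-pong Lemma~3.4.2 of \cite{BFH-00}; this extra care is warranted but does not change the underlying argument.
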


\begin{proof}

To prove (1) we apply Theorem \ref{main1} with $\mathcal{K} = \mathcal{F}$ and  to prove (2) we apply Theorem \ref{main1} with $\mathcal{K} = \mathcal{F} \cup [\langle \sigma \rangle]$.

\end{proof}

As a corollary of the theorem for $\mathcal{F}=\emptyset$, we recover the case for surface group with punctures, which was proved in \cite[Theorem 4.9]{MjR-08}.

 \section{Further discussion: non-relative hyperbolicity }\label{sec:applications}

From the results we have so far we gather a necessary condition of non-relative hyperbolicity as follows:   
\begin{proposition}
Let  $\F\rtimes \widehat{Q}$ be NRH for every free group $Q$ which generated by nonzero powers of $\phi_i$'s which are sufficiently different.  
Then, there exists  $i \in \{1, \cdots, k\}$ such that $\mathcal{K}^*_i \neq \emptyset$ and $\mathcal{K}^*_i\neq \mathcal{K}^*_j$ for some $j \neq i$.
\end{proposition}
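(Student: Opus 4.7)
\medskip

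The plan is to argue by contrapositive. Suppose the conclusion fails, so that for every $i\in\{1,\ldots,k\}$, either $\mathcal{K}^*_i = \emptyset$ or $\mathcal{K}^*_i = \mathcal{K}^*_j$ for all $j\neq i$. I will first unpack this into a clean dichotomy: if some sink $\mathcal{K}^*_{i_0}$ is nonempty, then by hypothesis $\mathcal{K}^*_j = \mathcal{K}^*_{i_0}$ for every $j\neq i_0$, so in particular $\mathcal{K}^*_j \neq \emptyset$ for all $j$. Consequently, the negation of the conclusion forces exactly one of the following two cases:
\begin{enumerate}
    \item[(A)] $\mathcal{K}^*_i = \emptyset$ for every $i\in\{1,\ldots,k\}$, or
    \item[(B)] $\mathcal{K}^*_i \neq \emptyset$ for every $i$, and $\mathcal{K}^*_1 = \mathcal{K}^*_2 = \cdots = \mathcal{K}^*_k$.
\end{enumerate}

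In case (A), I would invoke Theorem~\ref{suffdiffhyp} directly: since $\{\phi_1,\ldots,\phi_k\}$ is rotationless, exponentially growing, sufficiently different, and all sinks are empty, there exist powers $m_i$ large enough that $Q = \langle \phi_1^{m_1},\ldots,\phi_k^{m_k}\rangle$ is a free group (by Lemma~\ref{pfifbf}) and $\F\rtimes\widehat{Q}$ is hyperbolic. A hyperbolic group is trivially (strongly) relatively hyperbolic, so this single free group $Q$ already contradicts the assumption that $\F\rtimes\widehat{Q}$ is NRH \emph{for every} free group $Q$ generated by nonzero powers of the $\phi_i$'s.

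In case (B), I would apply Theorem~\ref{RH}: the collection is sufficiently different with all sinks nonempty and equal, so condition (2) of that theorem holds, hence there is a free group $Q$ generated by sufficiently large powers of the $\phi_i$'s for which $\F\rtimes\widehat{Q}$ admits a cusp-preserving relatively hyperbolic structure. Again this contradicts the NRH hypothesis for every such $Q$.

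Since both cases of the negation yield a contradiction, the conclusion must hold. I do not expect a real obstacle: the entire content is a case split on the structure of the nonattracting sinks together with two black-box invocations (Theorems~\ref{suffdiffhyp} and \ref{RH}) whose hypotheses match the two cases exactly. The only point to be careful about is the interpretation of NRH — namely, that it rules out being hyperbolic as well as being properly relatively hyperbolic — which is standard, and the fact that the existence of a single free group $Q$ for which the extension is (relatively) hyperbolic is enough to contradict the universal quantifier in the hypothesis.
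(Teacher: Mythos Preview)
Your proof is correct and matches the paper's intended approach: the paper does not spell out a proof but introduces the proposition with ``From the results we have so far we gather a necessary condition of non-relative hyperbolicity as follows,'' indicating it is meant as an immediate contrapositive consequence of Theorems~\ref{suffdiffhyp} and~\ref{RH}, exactly as you have laid out. Your dichotomy (all sinks empty versus all sinks nonempty and equal) and the two black-box invocations are precisely what the paper has in mind.
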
 
 A sufficient condition for NRH remains to be unknown. However, we inversigate the following example. 
\begin{example}\label{notrh}

Let $\F = \langle a, b, c, d, e \rangle, F^1 = \langle a, b, c \rangle, F^2 = \langle a, b, c, d\rangle$. Define $\phi_1$ as the outer automorphism class of the map $\Phi_1\co a\mapsto ac, b\mapsto a, c\mapsto b, d\mapsto dc, e\mapsto ec$ and $\phi_2$ to be the outer automorphism class of the map $\Phi_2 \co a\mapsto ad, b\mapsto a, c\mapsto b, d\mapsto c, e\mapsto e$. Then $\mathcal{A}_{na}(\Lambda^\pm_1) = \emptyset$ and $\mathcal{A}_{na}(\Lambda^\pm_2) = \{[\langle e \rangle]\}$. In any relatively hyperbolic structure, the subgroup $\mathbb{Z} \oplus \mathbb{Z} \cong \langle e, \Phi_2 \rangle$ must be contained in some peripheral subgroup.  A simple computation shows that $\F \leq  \langle \Phi_1^{-1} \Phi_2 , b, c\rangle $ and hence the extension group $1 \to \F \to E \to \langle \phi_1, \phi_2 \rangle \to 1$ is not relatively hyperbolic ($NRH$ as in \cite{BDM-09}). 

The hypothesis for freeness in the above theorem is easily checked, and so there exists $M$ such that for every $m, n \geq M$, the group  ${Q} = \langle \phi_1^m, \phi_2^n\rangle$ is a free group of rank 2. Once we raise the automorphisms to sufficiently high powers, the abnormality with peripheral subgroups goes away. We ask the question: is $\F\rtimes \widehat{Q}$ hyperbolic relative to $\langle e, \Phi_2 \rangle$ when $Q$ is generated by high enough powers of  $\phi_1, \phi_2$ ?
\end{example}

 \begin{remark}\label{question}
 Note that $[F^i]$ is a fixed vertex for the action of $\phi_i$ on the free-factor complex  $\FF$, for $i = 1, 2$. Also the free-factors in the set $\mathcal{A}_{na}(\Lambda^\pm_i)$ are fixed vertices for the same action. If we translate the hypothesis of Theorem \ref{pfirh} in the framework of the free factor complex $\FF$ of $\F$, then what we require is that (a) $[F^1]$ and $[F^2]$ are distinct points in $\FF$ (b) $\mathcal{A}_{na}(\Lambda^\pm_1) = \mathcal{A}_{na}(\Lambda^\pm_2)$. Using Proposition \ref{dist2wat}, the second hypothesis in turn implies that $[F^1]$ and the free-factor components of $\mathcal{A}_{na}(\Lambda^\pm_2)$ are close in $\FF$. Similarly free-factor components of $\mathcal{A}_{na}(\Lambda^\pm_1)$ and $[F^2]$ are also close in $\FF$. By partial fully irreducibility  $[F^i]$ and free-factor components of $\mathcal{A}_{na}(\Lambda^\pm_i)$ are also close. Therefore the entire setup of Theorem \ref{pfirh} is in a small (bounded) region of $\FF$.
    The obvious question that comes to mind is: if we assume that $[F^1]$ and $[F^2]$ are far apart in Theorem \ref{pfirh} can we still expect relative hyperbolicity of $\F\rtimes \widehat{Q}$ ?
\end{remark}

\bibliographystyle{alpha}
\def\bibfont{\footnotesize}
\bibliography{biblo1}

\end{document}